\newcommand{\Z}{{\mathbb Z}}
\theoremstyle{plain} \numberwithin{equation}{section}
\newtheorem{thm}{Theorem}[section]
\newtheorem{theorem}[thm]{Theorem}
\newtheorem{lemma}[thm]{Lemma}
\newtheorem{corollary}[thm]{Corollary}
\newtheorem{example}[thm]{Example}
\newtheorem{definition}[thm]{Definition}
\newtheorem{proposition}[thm]{Proposition}
\begin{document}
\setcounter{page}{1}

\title[ Hasan and Padhan]{Detecting capable pairs of some nilpotent Lie superalgebras}

	\author{IBRAHEM YAKZAN HASAN}
	\address{Centre for Data Science, Institute of Technical Education and Research \\
	Siksha `O' Anusandhan (A Deemed to be University)\\
	Bhubaneswar-751030 \\
	Odisha, India}
\email{ibrahemhasan898@gmail.com}
	\author[Padhan]{Rudra Narayan Padhan}
\address{Centre for Data Science, Institute of Technical Education and Research  \\
	Siksha `O' Anusandhan (A Deemed to be University)\\
	Bhubaneswar-751030 \\
	Odisha, India}
\email{rudra.padhan6@gmail.com, rudranarayanpadhan@soa.ac.in}

\author{Manjula Das}
	\address{ Siksha `O' Anusandhan (A Deemed to be University)\\
	Bhubaneswar-751030 \\
	Odisha, India}  
\email{manjuladas@soa.ac.in}

\subjclass[2010]{Primary 17B30; Secondary 17B05.}
\keywords{Heisenberg Lie superalgebra; multiplier; capability; non-abelian tensor and exterior product}
\maketitle

\begin{abstract}
In  this article, we define the capable pairs of Lie superalgebras. We classify all capable pairs of abelian and Heisenberg Lie superalgebras. After that we discuss on pairs of Lie superalgebras with derived subalgebra of dimension one and a non-abelian ideal. Finally, we determine the structure of the Schur multiplier of pairs of Heisenberg Lie superalgebras.
\end{abstract}

\section{Introduction}

In the twentieth century numerous studis have been conducted on the classification of $p$-groups. A group $G$ is said to be capable if there exists a group $K$  such that $G \cong K/Z(K)$. The notion of capable group was introduced by Baer \cite{Baer1938} and he classified all finitely generated capable abelian groups. Hall \cite{Hall}, pointed out that a capable group plays an important role in order to classify $p$-groups. Beyl et. al. \cite{Beyl1979} defined the epicenter $Z^{*}(G)$ of a group $G$, and they proved that a group $G$ is capable if and only if $Z^{*}(G)=1$. Further, exterior square of a group was studied for the first time in \cite{Brown2010}, which has an interesting relation with the capability of a group. Exterior center $Z^{\wedge}(G)$ of a group $G$ is defined as $Z^{\wedge}(G)=\{g \in G|~g\wedge h=1,~ \forall~h \in G\}$. Ellis \cite{Ellis1995} proved that $Z^{\wedge}(G)=Z^{*}(G)$. For more details on classification of capable abelian groups and capable extra-special $p$-groups one can go through \cite{Baer1938, Tappe,p10}. In 1996, Ellis \cite{Ellis1996} generalized the notion of capability of group to pair of groups $(G,N)$ where $N$ is a normal subgroup of $G$. He investigated capability, Schur multipliers, and central series for pairs of groups, and classified all finitely generated abelian capable pairs.

\smallskip

Analogous to the notion of capable group, for Lie algebra capability has been defined and a lot of inverstigation has been done \cite{org, Batten1993, Ellis1991, p1, Niroomand2011, Alamian2008, N2, Russo2011,PMF2013, BS1996, Batten1996, Ellis1987, Hardy1998, Hardy2005, Kar1987, Moneyhun1994}. Recently, capability of  pair of Lie algebras has been defined and classified for the capable pairs of abelian and Heisenberg Lie algebras \cite{org}.

\smallskip

Lie superalgebras have applications in many areas of Mathematics and Theoretical Physics as they can be used to describe supersymmetry. Kac \cite{Kac1977} gives a comprehensive description of mathematical theory of Lie superalgebras, and establishes the classification of all finite dimensional simple Lie superalgebras over an algebraically closed field of characteristic zero. In the last few years the theory of Lie superalgebras has evolved remarkably, obtaining many results in representation theory and classification. Most of the well-known results of Lie algebras can be extended to Lie superalgebras but all are not trivial. The notion of multiplier and cover for Lie algebras is generalized to the case of Lie superalgebras and studied in \cite{ SN2018a,Nayak2018, NPP}. Since the last five years, a lot of research work has been carried out on the classification of capable Lie superalgebras and Schur multiplier, for more details see \cite{SN2018b,Padhandetec,hesam, p50,p51,p52,p53}. The aim of this paper is to discuss the capability of pair of Lie superalgebras and classify the capable pairs of abelian and Heisenberg Lie superalgebras.


\smallskip

 \smallskip
 

 \smallskip
 

\section{Preliminaries and Auxiliary Results}

Let $\mathbb{Z}_{2}=\{\bar{0}, \bar{1}\}$ be a field. A $\mathbb{Z}_{2}$-graded vector space $V$ is simply a direct sum of vector spaces $V_{\bar{0}}$ and $V_{\bar{1}}$, i.e., $V = V_{\bar{0}} \oplus V_{\bar{1}}$. It is also referred as a superspace. We consider all vector superspaces and superalgebras are over field $\mathbb{F}$ (characteristic of $\mathbb{F} \neq 2,3$). Elements in $V_{\bar{0}}$ (resp. $V_{\bar{1}}$) are called even (resp. odd) elements. Non-zero elements of $V_{\bar{0}} \cup V_{\bar{1}}$ are called homogeneous elements. For a homogeneous element $v \in V_{\sigma}$, with $\sigma \in \mathbb{Z}_{2}$ we set $|v| = \sigma$ is the degree of $v$. A  subsuperspace (or, subspace)  $U$ of $V$ is a $\mathbb{Z}_2$-graded vector subspace where  $U= (V_{\bar{0}} \cap U) \oplus (V_{\bar{1}} \cap U)$. We adopt the convention that whenever the degree function appears in a formula, the corresponding elements are supposed to be homogeneous. 

\smallskip 

A  Lie superalgebra (see \cite{Kac1977, Musson2012}) is a superspace $L = L_{\bar{0}} \oplus L_{\bar{1}}$ with a bilinear mapping $ [., .] : L \times L \rightarrow L$ satisfying the following identities:
\begin{enumerate}
\item $[L_{\alpha}, L_{\beta}] \subset L_{\alpha+\beta}$, for $\alpha, \beta \in \mathbb{Z}_{2}$ ($\mathbb{Z}_{2}$-grading),
\item $[x, y] = -(-1)^{|x||y|} [y, x]$ (graded skew-symmetry),
\item $(-1)^{|x||z|} [x,[y, z]] + (-1)^{ |y| |x|} [y, [z, x]] + (-1)^{|z| |y|}[z,[ x, y]] = 0$ (graded Jacobi identity),
\end{enumerate}
for all $x, y, z \in L$. Clearly $L_{\bar{0}}$ is a Lie algebra, and $L_{\bar{1}}$ is a $L_{\bar{0}}$-module. If $L_{\bar{1}} = 0$, then $L$ is just a Lie algebra, but in general a Lie superalgebra is not a Lie algebra. A Lie superalgebra $L$, is called abelian if  $[x, y] = 0$ for all $x, y \in L$. Lie superalgebras without the even part, i.e., $L_{\bar{0}} = 0$, are  abelian. A subsuperalgebra (or subalgebra) of $L$ is a $\mathbb{Z}_{2}$-graded vector subspace which is closed under bracket operation. The graded subalgebra $[L, L]$,  of $L$  is known as the derived subalgebra of $L$. A $\mathbb{Z}_{2}$-graded subspace $I$ is a graded ideal of $L$ if $[I, L]\subseteq I$. The ideal 
\[Z(L) = \{z\in L : [z, x] = 0\;\mbox{for all}\;x\in L\}\] 
is a graded ideal and it is called the {\it center} of $L$. A homomorphism between superspaces $f: V \rightarrow W $ of degree $|f|\in \mathbb{Z}_{2}$, is a linear map satisfying $f(V_{\alpha})\subseteq W_{\alpha+|f|}$ for $\alpha \in \mathbb{Z}_{2}$. In particular, if $|f| = \bar{0}$, then the homomorphism $f$ is called homogeneous linear map of even degree. A Lie superalgebra homomorphism $f: L \rightarrow M$ is a  homogeneous linear map of even degree such that $f([x,y]) = [f(x), f(y)]$ for all $x, y \in L$.  If $I$ is an ideal of $L$, the quotient Lie superalgebra $L/I$ inherits a canonical Lie superalgebra structure such that the natural projection map becomes a homomorphism. The notions of {\it epimorphisms, isomorphisms} and {\it automorphisms} have the obvious meaning. 
\smallskip

Throughout this article for superdimension of Lie superalgebra $L$ we simply write $\dim L=(m\mid n)$, where $\dim L_{\bar{0}} = m$ and $\dim L_{\bar{1}} = n$. Also  $A(m \mid n)$ denotes an abelian Lie superalgebra where $\dim A=(m\mid n)$. A  Lie superalgebra $L$ is said to be Heisenberg Lie superalgebra if $Z(L)=L'$ and $\dim Z(L)=1$. According to the homogeneous generator of $Z(L)$, Heisenberg Lie superalgebras can further split into even or odd Heisenberg Lie superalgebras \cite{MC2011}. By Heisenberg Lie superalgebra we mean special Heisenberg Lie superalgebra in this article. Now we list some useful results from \cite{Nayak2018}, for further use.

\begin{theorem}\label{th3.3}\cite[See Theorem 3.4]{Nayak2018}
\[\dim \mathcal{M}(A(m \mid n)) = \big(\frac{1}{2}(m^2+n^2+n-m)\mid mn \big).\]
\end{theorem}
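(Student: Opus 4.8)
The plan is to compute $\mathcal{M}(A(m\mid n))$ directly from a free presentation, exploiting that for an abelian Lie superalgebra the kernel of the abelianization map is exactly the derived subalgebra. First I would fix a homogeneous basis $\{e_1,\dots,e_m\}$ of $A_{\bar 0}$ and $\{f_1,\dots,f_n\}$ of $A_{\bar 1}$, let $F$ be the free Lie superalgebra on these generators (with the matching parities), and let $\pi\colon F\to A(m\mid n)$ be the epimorphism sending each generator to the corresponding basis vector. Since $A(m\mid n)$ is abelian, $\pi$ kills every bracket, so $F'\subseteq R:=\ker\pi$; conversely $F/F'$ is the free abelian superalgebra on the generators, which is $A(m\mid n)$ itself, forcing $R=F'$. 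Thus $0\to F'\to F\to A(m\mid n)\to 0$ is a free presentation.

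Next I would invoke the free-presentation description of the Schur multiplier for Lie superalgebras \cite{Nayak2018}, namely $\mathcal{M}(L)\cong (R\cap F')/[R,F]$. With $R=F'$ this collapses to $\mathcal{M}(A(m\mid n))\cong F'/[F',F]$, which is precisely the degree-two homogeneous component $\gamma_2(F)/\gamma_3(F)$ of the lower central series of the free Lie superalgebra. The entire problem therefore reduces to writing down a basis of this degree-two component, i.e.\ of the super exterior square of $A(m\mid n)$.

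The core step is to enumerate the degree-two brackets modulo the defining relations of a free Lie superalgebra, tracking the super sign in the graded skew-symmetry $[x,y]=-(-1)^{|x||y|}[y,x]$. This splits into three homogeneous blocks. The even-even brackets $[e_i,e_j]$ are genuinely antisymmetric and contribute the exterior square of $A_{\bar 0}$, of dimension $\binom{m}{2}=\tfrac12(m^2-m)$ (even). The mixed brackets $[e_i,f_k]$ carry no symmetry constraint and contribute $A_{\bar 0}\otimes A_{\bar 1}$, of dimension $mn$ (odd). The odd-odd brackets $[f_k,f_l]$ satisfy $[f_k,f_l]=-(-1)^{1\cdot1}[f_l,f_k]=[f_l,f_k]$, so they are \emph{symmetric} and contribute the symmetric square of $A_{\bar 1}$, of dimension $\binom{n+1}{2}=\tfrac12(n^2+n)$ (even). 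Summing the even blocks yields $\tfrac12(m^2-m)+\tfrac12(n^2+n)=\tfrac12(m^2+n^2+n-m)$, while the odd block gives $mn$, which is the asserted superdimension.

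The main obstacle — and the exact point where the super case departs from the classical Lie-algebra answer $\binom{\dim}{2}$ — is the odd-odd block: one must justify that the diagonal brackets $[f_k,f_k]$ survive rather than vanish. Graded skew-symmetry imposes $[f_k,f_k]=[f_k,f_k]$, i.e.\ no relation at all, and since the graded Jacobi identity first produces relations in degree three, nothing in degree two can be eliminated. Hence each $[f_k,f_l]$ with $k\le l$ (including $k=l$) is an independent basis vector, which is what creates the extra $+\tfrac12(n^2+n)$ term and accounts for the deviation from a purely antisymmetric count. Verifying this non-collapse carefully, rather than the block bookkeeping, is where the argument must be watertight.
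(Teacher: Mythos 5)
The paper does not prove this statement; it is imported verbatim from Nayak's work (Theorem 3.4 of the cited reference), so there is no in-paper proof to compare against. Your argument is nonetheless a correct, self-contained derivation, and it follows the standard route: the free presentation of an abelian superalgebra has $R=F'$, the Hopf-type formula collapses $\mathcal{M}(A(m\mid n))$ to $F'/[F',F]$, i.e.\ the degree-two homogeneous component of the free Lie superalgebra, and the block decomposition $\Lambda^2 A_{\bar 0}\oplus(A_{\bar 0}\otimes A_{\bar 1})\oplus S^2 A_{\bar 1}$ gives exactly $\bigl(\tfrac12(m^2-m)+\tfrac12(n^2+n)\mid mn\bigr)$. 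Your one genuinely delicate point --- that the diagonal brackets $[f_k,f_k]$ do not vanish --- is precisely what the paper's Section~2 encodes via the $s$-regular monomials of the form $(v)(v)$ with $|v|=\bar 1$: the degree-two basis consists of $\binom{m+n}{2}$ regular monomials plus $n$ such squares, which reproduces your count and confirms the extra $+\tfrac12(n^2+n)$ term. The argument is sound as written.
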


\begin{theorem} \label{th3.4}\cite[See Theorem 4.2, 4.3]{Nayak2018}
Every Heisenberg Lie superalgebra with even center has dimension $(2m+1 \mid n)$ and  is isomorphic to $H(m , n)=H_{\overline{0}}\oplus H_{\overline{1}}$, where
\[H_{\overline{0}}=<x_{1},\ldots,x_{2m},z \mid [x_{i},x_{m+i}]=z,\ i=1,\ldots,m>\]
and 
\[H_{\overline{1}}=<y_{1},\ldots,y_{n}\mid [y_{j}, y_{j}]=z,\  j=1,\ldots,n>.\]
Further,
$$
\dim \mathcal{M}(H(m , n))=
\begin{cases}
 (2m^{2}-m+n(n+1)/2-1 \mid 2mn)\quad \mbox{if}\;m+n\geq 2\\
(0 \mid 0) \quad \mbox{if}\;m=0, n=1\\  
   (2 \mid 0)\quad \mbox{if}\;m=1, n=0.
\end{cases}
$$
\end{theorem}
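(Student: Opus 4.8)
The plan is to prove the two assertions separately: first the structural classification, then the dimension formula for the multiplier. For the classification I would start from the defining conditions $L' = Z(L)$ and $\dim Z(L) = 1$, writing $Z(L) = \langle z\rangle$ with $z$ even. Because the center is purely even while $[L_{\bar 0}, L_{\bar 1}] \subseteq L_{\bar 1}$ and simultaneously lies in $L' = \langle z\rangle \subseteq L_{\bar 0}$, I would first deduce $[L_{\bar 0}, L_{\bar 1}] = 0$. The bracket on $L_{\bar 0}$ then takes values in $\langle z\rangle$ and, by graded skew-symmetry, defines an alternating bilinear form whose radical is exactly $Z(L) \cap L_{\bar 0} = \langle z\rangle$; non-degeneracy of the induced form on $L_{\bar 0}/\langle z\rangle$ forces $\dim(L_{\bar 0}/\langle z\rangle) = 2m$ and yields a symplectic basis $x_1,\dots,x_{2m}$ with $[x_i,x_{m+i}]=z$. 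On the odd part the bracket is again $\langle z\rangle$-valued but symmetric, since $[y,y']=[y',y]$ for odd $y,y'$; its radical is $Z(L)\cap L_{\bar 1}=0$, so in characteristic $\neq 2$ it can be diagonalized to give a basis $y_1,\dots,y_n$ with $[y_j,y_j]=z$. Assembling these pieces produces precisely $H(m,n)$, with $\dim L = (2m+1 \mid n)$.

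For the multiplier, the cleanest route is to compare $H(m,n)$ with its abelianization through the central extension
\[
0 \to \langle z\rangle \to H(m,n) \to A(2m\mid n) \to 0 ,
\]
noting that $H(m,n)/H(m,n)' = A(2m\mid n)$. Applying the five-term homology exact sequence for Lie superalgebras gives
\[
\mathcal{M}(H(m,n)) \to \mathcal{M}(A(2m\mid n)) \to \langle z\rangle \to H(m,n)/H(m,n)' \to A(2m\mid n) \to 0 .
\]
Since $z \in H(m,n)'$, the map $\langle z\rangle \to H(m,n)/H(m,n)'$ is zero, so $\mathcal{M}(A(2m\mid n)) \to \langle z\rangle$ is surjective, and $\langle z\rangle$ is purely even of dimension one. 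Feeding in Theorem~\ref{th3.3}, one computes $\dim \mathcal{M}(A(2m\mid n)) = (2m^2 - m + n(n+1)/2 \mid 2mn)$, whose even part exceeds the claimed value by exactly one and whose odd part already agrees. Thus the formula for $m+n\geq 2$ follows provided the left-hand map $\mathcal{M}(H(m,n)) \to \mathcal{M}(A(2m\mid n))$ is injective.

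This injectivity is the main obstacle. Equivalently, one must show that the Ganea-type connecting map $\langle z\rangle \otimes A(2m\mid n) \to \mathcal{M}(H(m,n))$ is zero; I would verify this via a free presentation $H(m,n)=F/R$ and the Hopf-type formula $\mathcal{M}(H(m,n)) = (R\cap F')/[F,R]$, checking that every relation forced by the extra central generator already lies in $[F,R]$, so that no even class survives beyond the one accounting for $z$ and no odd class is lost. The two low-dimensional cases with $m+n=1$ must be treated by direct computation, since there the argument degenerates: for $m=0,\,n=1$ the even part of $H$ is just the center and the presentation collapses (the generic count still returns $(0\mid 0)$), whereas for $m=1,\,n=0$ the injectivity genuinely fails, as $H(1,0)$ is the three-dimensional Heisenberg Lie algebra whose multiplier is the exceptional $(2\mid 0)$ rather than the $(0\mid 0)$ predicted by the general formula. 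A short explicit calculation in each of these two cases completes the proof.
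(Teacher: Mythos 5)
First, note that the paper does not prove this statement at all: it is quoted verbatim from Nayak's \emph{Multipliers of nilpotent Lie superalgebras} (Theorems 4.2 and 4.3 there), so there is no in-paper proof to compare against. Judged on its own terms, your reconstruction of the structural half is essentially sound: deducing $[L_{\bar 0},L_{\bar 1}]=0$ from the gradation, extracting a symplectic form on $L_{\bar 0}/\langle z\rangle$ and a non-degenerate symmetric form on $L_{\bar 1}$, and reading off the basis is the standard argument. (The only caveat is that normalizing every diagonal entry to $[y_j,y_j]=z$ rescales $y_j$ by a square, so this step needs the ground field to be quadratically closed; that is a defect of the quoted statement over a general field of characteristic $\neq 2,3$, not of your argument.)

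The multiplier half, however, has a genuine gap exactly where you flag it, and flagging it is not the same as closing it. The five-term sequence gives exactness at $\mathcal{M}(A(2m\mid n))$, hence identifies the \emph{image} of $\mathcal{M}(H(m,n))\to\mathcal{M}(A(2m\mid n))$ with the kernel of $\mathcal{M}(A(2m\mid n))\to\langle z\rangle$; it says nothing about the kernel of the left-hand map. Since the target of that comparison, namely $\dim\mathcal{M}(A(2m\mid n))-(1\mid 0)$, is numerically equal to the asserted $\dim\mathcal{M}(H(m,n))$, the injectivity you need is \emph{equivalent} to the formula you are trying to prove, so the argument is circular until the Ganea-type map $\langle z\rangle\otimes A(2m\mid n)\to\mathcal{M}(H(m,n))$ is actually shown to vanish for $m+n\geq 2$. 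Your own discussion of $H(1,0)$ shows this vanishing is not automatic: there the Ganea map is surjective onto a two-dimensional space, which is precisely why the exceptional value $(2\mid 0)$ appears. So the decisive content of the theorem is the explicit Hopf-formula computation (as in Examples \ref{e1}--\ref{e3} of this paper, or in Nayak's original proof) verifying that for $m+n\geq 2$ every class coming from $z\otimes H(m,n)^{\mathrm{ab}}$ already dies in $[F,R]$; the sentence ``I would verify this via a free presentation'' is a promissory note for the entire proof, not a proof. The two degenerate cases $m+n=1$ are correctly identified and are indeed handled by direct computation.
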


The following is the established result for the multiplier and cover of Heisenberg Lie superalgebra of odd center.

\begin{theorem}\label{th3.6}\cite[See Theorem 2.8]{SN2018b}
Every Heisenberg Lie superalgebra, with odd center has dimension $(m \mid m+1)$, is isomorphic to $H_{m}=H_{\overline{0}}\oplus H_{\overline{1}}$, where
\[H_{m}=<x_{1},\ldots,x_{m} , y_{1},\ldots,y_{m},z \mid [x_{j},y_{j}]=z,   j=1,\ldots,m>.\]
Further,
$$
\dim \mathcal{M}(H_{m})=
\begin{cases}
 (m^{2}\mid m^{2}-1)\quad \mbox{if}\;m\geq 2\\  
(1\mid 1) \quad \quad \mbox{if}\;m=1.  
  \end{cases}
$$
\end{theorem}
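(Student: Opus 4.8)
The plan is to prove the statement in two stages: first the structural classification of Heisenberg Lie superalgebras with odd center, and then the dimension formula for the Schur multiplier via a weight-graded free presentation.

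\emph{Classification.} Let $L$ be Heisenberg with $Z(L)=L'=\langle z\rangle$ and $z$ odd, so $\langle z\rangle\subseteq L_{\bar 1}$. Because the bracket is $\mathbb{Z}_2$-graded and its image $L'$ lies in $L_{\bar 1}$, both $[L_{\bar 0},L_{\bar 0}]$ and $[L_{\bar 1},L_{\bar 1}]$ (which land in $L_{\bar 0}$) must vanish, and the only nonzero products arise from a bilinear form $\beta\colon L_{\bar 0}\times L_{\bar 1}\to\mathbb{F}$ with $[u,v]=\beta(u,v)z$. Since $z$ is central it lies in the right radical of $\beta$; and because $Z(L)=\langle z\rangle$ exactly, $\beta$ is nondegenerate on $L_{\bar 0}$ while its right radical is precisely $\langle z\rangle$. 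Writing $m=\dim L_{\bar 0}$, left nondegeneracy gives $\operatorname{rank}\beta=m$, whence $\dim L_{\bar 1}=m+1$ and $\dim L=(m\mid m+1)$. Choosing a complement $W$ to $\langle z\rangle$ in $L_{\bar 1}$, the induced pairing $L_{\bar 0}\times W\to\mathbb{F}$ is perfect, so there exist dual bases $x_1,\dots,x_m$ and $y_1,\dots,y_m$ with $[x_i,y_j]=\delta_{ij}z$, yielding $L\cong H_m$.

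\emph{Reduction of the multiplier.} Take the free Lie superalgebra $F$ on $m$ even and $m$ odd generators mapping onto $H_m$ with kernel $R$, so $\mathcal{M}(H_m)=(R\cap F')/[F,R]$. I would use the weight grading $F=\bigoplus_{k\ge 1}F_k$. As $H_m$ is $2$-step nilpotent and graded with its generators in weight $1$ and $z$ in weight $2$, the projection $F\to H_m$ is weight-graded, so $R$ is graded with $R\cap F_1=0$, $R\cap F_2=R_2:=\ker(F_2\to\langle z\rangle)$, and $F_k\subseteq R$ for $k\ge 3$. A short degree count then gives $[F,R]_2=0$, $[F,R]_3=[F_1,R_2]$, and $[F,R]_k=F_k$ for $k\ge 4$, so that
\[
\mathcal{M}(H_m)=R_2\ \oplus\ F_3/[F_1,R_2].
\]
The weight-$2$ term is computed as in the abelian case of Theorem~\ref{th3.3}: all even brackets of generators die in $H_m$, contributing $\binom{m}{2}+\binom{m+1}{2}=m^2$ even dimensions, while among the $m^2$ odd brackets $[x_i,y_j]$ only the single relation $\sum c_i[x_i,y_i]\mapsto(\sum c_i)z$ appears, cutting the odd part to $m^2-1$; hence $R_2=(m^2\mid m^2-1)$.

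\emph{The weight-$3$ term and the obstacle.} The crux is to decide whether $F_3/[F_1,R_2]$ vanishes. Writing $w=[x_1,y_1]$, one has $F_3=[F_1,R_2]+[F_1,w]$, so it suffices to show each $[x_i,w]$ and $[y_j,w]$ lies in $[F_1,R_2]$. For $i,j\ne 1$ this follows immediately from the graded Jacobi identity, since the inner brackets $[x_i,x_1]$, $[x_i,y_1]$, $[y_j,x_1]$, $[y_j,y_1]$ already lie in $R_2$. The delicate cases are $[x_1,w]$ and $[y_1,w]$: here I would invoke $[x_1,y_1]-[x_2,y_2]\in R_2$, available precisely when $m\ge 2$, to write $[x_1,w]=[x_1,[x_1,y_1]-[x_2,y_2]]+[x_1,[x_2,y_2]]$ and reduce both summands into $[F_1,R_2]$ via Jacobi, and similarly for $[y_1,w]$. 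Thus for $m\ge 2$ the weight-$3$ term is zero and $\mathcal{M}(H_m)=(m^2\mid m^2-1)$. When $m=1$ this reduction is unavailable, the odd element $[x_1,[x_1,y_1]]$ survives modulo $[F_1,R_2]$, and combined with $R_2=(1\mid 0)$ this gives $\mathcal{M}(H_1)=(1\mid 1)$. The main obstacle is exactly this weight-$3$ analysis: controlling $F_3/[F_1,R_2]$ demands careful bookkeeping with the graded Jacobi identity and its signs, and it is here that the dichotomy between $m\ge 2$ and $m=1$ originates.
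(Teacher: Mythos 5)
The paper does not prove this statement at all: Theorem \ref{th3.6} is quoted verbatim from Nayak's classification paper \cite{SN2018b} (Theorem 2.8 there) and is listed among the preliminary results "for further use," so there is no in-paper argument to compare yours against. Your self-contained derivation is, as far as I can check, correct. The classification via the pairing $\beta\colon L_{\bar 0}\times L_{\bar 1}\to\mathbb{F}$ is sound: the oddness of $z$ forces $[L_{\bar 0},L_{\bar 0}]=[L_{\bar 1},L_{\bar 1}]=0$, left-nondegeneracy pins $\operatorname{rank}\beta=m$, and the right radical $\langle z\rangle$ accounts for the extra odd dimension. The multiplier computation via the weight-graded Hopf formula $(R\cap F')/[F,R]$ is also correct, including the count $\dim R_2=\bigl(\binom{m}{2}+\binom{m+1}{2}\mid m^2-1\bigr)=(m^2\mid m^2-1)$ and the reduction of $F_3$ modulo $[F_1,R_2]$ using $[x_1,y_1]-[x_2,y_2]\in R_2$ when $m\ge 2$. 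Two small points you should make explicit: (i) in passing from $R\cap F'$ to $R_2\oplus F_3/[F_1,R_2]$ you use $[F,R]_4=F_4$, which requires checking that $[w,w]$ (with $w=[x_1,y_1]$ odd) lies in $[F_1,F_3]$ — it does, by Jacobi; and (ii) in the $m=1$ case you should note that the remaining even weight-$3$ generator $[y_1,[x_1,y_1]]$ equals $\tfrac12[x_1,[y_1,y_1]]\in[F_1,R_2]$, so that $F_3/[F_1,R_2]$ is exactly $(0\mid 1)$ and not larger; with that, $R_2=(1\mid 0)$ gives $\mathcal{M}(H_1)=(1\mid 1)$ as claimed. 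With those details filled in, your argument is a complete proof of a result the paper only cites.
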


\begin{definition}
A Lie superalgebra $L$ is said to be $capable$ if there exists a Lie superalgebra $H$ such that $L \cong H/Z(H)$. 
\end{definition}

\begin{theorem}\label{thm555}\cite[Theorem 6.7]{Padhandetec} 
	$H_m$ is capable if and only if $m = 1$.

\end{theorem}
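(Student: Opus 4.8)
The plan is to decide capability through the epicenter. As in the group and Lie algebra settings, one first records the criterion that a Lie superalgebra $L$ is capable if and only if its epicenter $Z^{*}(L)$ vanishes, where $Z^{*}(L)$ is the smallest graded central ideal with $L/Z^{*}(L)$ capable. Since $Z^{*}(L)\subseteq Z(L)$ and, for $L=H_{m}$, the centre $Z(H_{m})=H_{m}'=\langle z\rangle$ is one-dimensional (spanned by the odd element $z$), the epicenter can only be $0$ or $\langle z\rangle$. Hence $H_{m}$ is capable precisely when $z\notin Z^{*}(H_{m})$, and the whole problem reduces to deciding membership of the single central generator $z$ in the epicenter.

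The detection tool is the behaviour of the Schur multiplier under the central quotient by $N=\langle z\rangle$. Because $N\subseteq H_{m}'\cap Z(H_{m})$, the five-term exact sequence attached to the central extension $0\to N\to H_{m}\to H_{m}/N\to 0$ collapses to the exact sequence
$$\mathcal{M}(H_{m})\xrightarrow{\ \phi\ }\mathcal{M}(H_{m}/N)\longrightarrow N\longrightarrow 0,$$
so that $\dim(\operatorname{im}\phi)=\dim\mathcal{M}(H_{m}/N)-\dim N$. The key lemma, which is the super-analogue of Ellis's criterion, is that $z\in Z^{*}(H_{m})$ if and only if $\phi$ is injective, equivalently if and only if $\dim\mathcal{M}(H_{m}/N)=\dim\mathcal{M}(H_{m})+\dim N$ as $\mathbb{Z}_{2}$-graded dimensions. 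Thus $H_{m}$ is capable exactly when this equality fails, i.e. when $\phi$ has nonzero kernel.

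It then remains to feed in the dimension formulas. Since $z$ spans the entire derived subalgebra, killing $N$ annihilates every bracket, so $H_{m}/N$ is abelian of superdimension $(m\mid m)$, that is $H_{m}/N\cong A(m\mid m)$. Theorem \ref{th3.3} gives $\dim\mathcal{M}(A(m\mid m))=(m^{2}\mid m^{2})$, while Theorem \ref{th3.6} gives $\dim\mathcal{M}(H_{m})=(m^{2}\mid m^{2}-1)$ for $m\geq 2$ and $(1\mid 1)$ for $m=1$. For $m\geq 2$ one has $\dim\mathcal{M}(H_{m})+\dim N=(m^{2}\mid m^{2}-1)+(0\mid 1)=(m^{2}\mid m^{2})=\dim\mathcal{M}(H_{m}/N)$, so $\phi$ is injective, $z\in Z^{*}(H_{m})$, and $H_{m}$ is not capable. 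For $m=1$ one instead gets $(1\mid 1)+(0\mid 1)=(1\mid 2)\neq(1\mid 1)$, so $\phi$ has a one-dimensional odd kernel, $z\notin Z^{*}(H_{1})$, and $H_{1}$ is capable.

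I expect the main obstacle to be the detection lemma itself: proving in the $\mathbb{Z}_{2}$-graded setting that $z\in Z^{*}(L)$ is equivalent to injectivity of $\mathcal{M}(L)\to\mathcal{M}(L/\langle z\rangle)$, and checking that the exact sequence and the dimension bookkeeping respect the even/odd grading, so that a kernel appearing in the odd component genuinely certifies noncapability. Once this graded criterion is in place, the comparison above is routine given Theorems \ref{th3.3} and \ref{th3.6}.
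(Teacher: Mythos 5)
Your argument is correct. Note that the paper itself gives no proof of this statement --- it is imported verbatim from \cite[Theorem 6.7]{Padhandetec} --- so there is nothing internal to compare against; but your route (reduce to whether $z\in Z^{*}(H_m)$, detect this via injectivity of $\mathcal{M}(H_m)\to\mathcal{M}(H_m/\langle z\rangle)$ in the five-term sequence, then compare $\dim\mathcal{M}(A(m\mid m))=(m^2\mid m^2)$ from Theorem \ref{th3.3} with $\dim\mathcal{M}(H_m)$ from Theorem \ref{th3.6}) is exactly the standard epicenter/multiplier dimension-count used in the cited source and codified in this paper's own machinery (Corollary \ref{coro852} with $I=L$), and your graded bookkeeping, including the $(0\mid 1)$-dimensional odd kernel in the $m=1$ case, checks out.
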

\begin{theorem}\label{th4.22}\cite[ Theorem 6.3]{Padhandetec}
	$A(m \mid n)$ is capable if and only if $m=0, n=1$ or $m+n \geq 2$.
\end{theorem}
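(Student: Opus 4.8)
The plan is to prove the two implications separately, obtaining capability by exhibiting an explicit cover and obtaining non-capability from a short ``cyclic quotient'' obstruction. For the capability direction, fix an even basis $a_1,\dots,a_m$ and an odd basis $b_1,\dots,b_n$ of $A(m\mid n)$ and build the free $2$-step nilpotent Lie superalgebra $H$ on these generators: adjoin a graded vector space $M$ with independent homogeneous symbols $c_{ij}$ ($i<j$, even), $d_{ik}$ (odd) and $g_{kl}$ ($k\le l$, even), declare $M$ central, and set $[a_i,a_j]=c_{ij}$, $[a_i,b_k]=d_{ik}$, $[b_k,b_l]=g_{kl}$ with $[H,H]=M$. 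Because the bracket is nilpotent of class two, the graded Jacobi identity holds automatically, and counting symbols gives $\dim M=\big(\tfrac12(m^2-m)+\tfrac12(n^2+n)\mid mn\big)$, which coincides with $\dim\mathcal{M}(A(m\mid n))$ in Theorem \ref{th3.3}; thus $H$ is a cover of $A(m\mid n)$.

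Everything then reduces to showing $Z(H)=[H,H]=M$, for this gives $H/Z(H)\cong A(m\mid n)$. Since $M\subseteq Z(H)$ always, I would check that no nonzero homogeneous class in $H/M$ is central, treating the even and odd cases separately. An even $v=\sum_i\alpha_i a_i$ is detected by $[v,a_j]=\sum_i\alpha_i[a_i,a_j]$ when $m\ge 2$ and by $[v,b_k]=\sum_i\alpha_i d_{ik}$ when $n\ge 1$; an odd $v=\sum_k\beta_k b_k$ is detected by $[v,a_1]=-\sum_k\beta_k d_{1k}$ when $m\ge 1$ and by $[v,b_1]=\sum_k\beta_k g_{1k}$ when $m=0$. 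Independence of the relevant symbols forces $v=0$ in each case, so no generator is central precisely when $m+n\ge 2$, and also in the isolated case $m=0,\,n=1$, where $[b_1,b_1]=g_{11}\neq 0$. This last observation is the genuinely \emph{super} point: an odd element can have a nonzero self-bracket, so $A(0\mid 1)$ is capable although its purely even one-dimensional analogue is not, and the cover obtained is exactly the Heisenberg superalgebra $H(0,1)$ of Theorem \ref{th3.4}. I expect this centre computation, and in particular the nondegeneracy of the symmetric odd--odd array $(g_{kl})$, to be the main technical obstacle.

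For the converse I must rule out the excluded cases $A(1\mid 0)$ and the trivial $A(0\mid 0)$. Here I would use the lemma that if $H/Z(H)$ is spanned modulo its centre by a single \emph{even} element, then $H$ is abelian: writing $H=Z(H)+\mathbb{F}x$ with $x$ even, every bracket collapses to $[x,x]$, which vanishes by graded skew-symmetry since $\mathrm{char}\,\mathbb{F}\neq 2$, so $H=Z(H)$ and $H/Z(H)=0$. This contradicts $\dim(H/Z(H))=(1\mid 0)$, showing $A(1\mid 0)$ is not capable, and the zero algebra is non-capable by convention. Together the two directions give the stated equivalence.
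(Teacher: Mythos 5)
Your argument is essentially correct, but note that the paper itself offers no proof of this statement: it is imported verbatim as Theorem~6.3 of the cited work of Padhan--Nayak--Pati, where the result is obtained through the exterior-center machinery ($L$ is capable iff $Z^{\wedge}(L)=0$, computed via $L\wedge L$ and the multiplier), the same toolkit this paper later deploys for pairs. Your route is more elementary and self-contained: you exhibit the free $2$-step nilpotent Lie superalgebra $H$ on the chosen homogeneous basis, observe that the graded Jacobi identity is automatic in class two, that the central symbols $c_{ij},d_{ik},g_{kl}$ count out to exactly $\dim\mathcal{M}(A(m\mid n))$ from Theorem~\ref{th3.3}, and then verify $Z(H)=[H,H]$ by pairing a putative central element against each generator; the symmetric nonvanishing self-bracket $[b_k,b_k]=g_{kk}$ is indeed the genuinely super feature that rescues the case $m=0$, $n=1$ and reproduces $H(0,1)$. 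The non-capability of $A(1\mid 0)$ via the ``one even generator modulo the centre forces abelian'' lemma is the standard obstruction and is sound since $\operatorname{char}\mathbb{F}\neq 2$. Two small points deserve care: the cover property (matching $\dim M$ with the multiplier) is decorative for your purposes --- all you actually need is $H/Z(H)\cong A(m\mid n)$, so you could drop the appeal to Theorem~\ref{th3.3} entirely; and the case $m=n=0$ is a definitional edge case (under the paper's definition the zero superalgebra satisfies $0\cong H/Z(H)$ for abelian $H$), so flagging it as a convention rather than a theorem is the honest thing to do, though it sits slightly awkwardly with the ``only if'' as literally stated. What your approach buys is an explicit witness $K$ with $K/Z(K)\cong A(m\mid n)$; what the exterior-center approach buys is uniformity with the rest of the paper's arguments for Heisenberg superalgebras and pairs.
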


\begin{theorem}\label{th4.33}\cite[ Theorem 6.4]{Padhandetec}  
	$H(m, n)$ is capable if and only if $m=1, n=0$.
\end{theorem}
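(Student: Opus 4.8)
The plan is to determine the epicenter $Z^{*}(H(m,n))$ directly, using the fact that a Lie superalgebra $L$ is capable if and only if $Z^{*}(L)=0$ (equivalently, by the super-analogue of \cite{Ellis1995}, $Z^{\wedge}(L)=0$). Since $H(m,n)$ is a special Heisenberg Lie superalgebra we have $Z(H(m,n))=H(m,n)'=\langle z\rangle$, which is one-dimensional and purely even, and the epicenter is always contained in the center. Hence $Z^{*}(H(m,n))$ is either $0$ or $\langle z\rangle$, and the entire problem reduces to deciding whether the central generator $z$ lies in $Z^{*}(H(m,n))$.

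To decide this I would invoke the Lie-superalgebra version of the Beyl--Felgner--Schmid criterion (the framework behind \cite{Beyl1979, org}): for a central ideal $N$ of $L$, one has $N\subseteq Z^{*}(L)$ if and only if the natural homomorphism $\alpha\colon \mathcal{M}(L)\to \mathcal{M}(L/N)$ induced by the projection is injective. Combined with the five-term exact sequence attached to the central extension $0\to N\to L\to L/N\to 0$, which for central $N$ reads
\[
\mathcal{M}(L)\xrightarrow{\ \alpha\ }\mathcal{M}(L/N)\xrightarrow{\ \theta\ } N\cap L' \to 0,
\]
injectivity of $\alpha$ becomes a pure dimension count: since $\operatorname{Im}\alpha=\ker\theta$ and $\theta$ is surjective, $\alpha$ is injective exactly when $\dim\mathcal{M}(L/N)=\dim\mathcal{M}(L)+\dim(N\cap L')$. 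Taking $N=\langle z\rangle=Z(H(m,n))$, and noting $N\cap L'=\langle z\rangle$ and $L/N\cong A(2m\mid n)$, the criterion says precisely that $z\in Z^{*}(H(m,n))$ if and only if $\dim\mathcal{M}(A(2m\mid n))=\dim\mathcal{M}(H(m,n))+(1\mid 0)$.

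The remaining work is arithmetic. From Theorem \ref{th3.3}, $\dim\mathcal{M}(A(2m\mid n))=\big(2m^{2}-m+\tfrac12(n^{2}+n)\mid 2mn\big)$, while Theorem \ref{th3.4} gives, for $m+n\ge 2$, $\dim\mathcal{M}(H(m,n))=\big(2m^{2}-m+\tfrac12 n(n+1)-1\mid 2mn\big)$. These differ by exactly $(1\mid 0)$, so the equality above holds, $z\in Z^{*}$, and $H(m,n)$ is \emph{not} capable whenever $m+n\ge 2$. The two exceptional small cases are checked separately: for $(m,n)=(0,1)$ one has $\dim\mathcal{M}(H(0,1))=(0\mid 0)$ and $\dim\mathcal{M}(A(0\mid 1))=(1\mid 0)$, again satisfying the equality, so $H(0,1)$ is not capable; for $(m,n)=(1,0)$ one has $\dim\mathcal{M}(H(1,0))=(2\mid 0)$ while $\dim\mathcal{M}(A(2\mid 0))=(1\mid 0)$, so $\dim\mathcal{M}(L/Z)<\dim\mathcal{M}(L)+\dim(Z\cap L')$, forcing $\alpha$ to have nontrivial kernel, whence $z\notin Z^{*}$ and $H(1,0)$ is capable. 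This exhausts all Heisenberg Lie superalgebras with even center and yields the stated equivalence.

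I expect the genuine obstacle to be not the arithmetic but the justification of the criterion and the exact sequence in the \emph{graded} setting: one must verify that the non-abelian exterior product of Lie superalgebras is functorial, that the five-term (Ganea-type) sequence survives the $\mathbb{Z}_2$-grading with correct parity bookkeeping, and that $Z^{\wedge}=Z^{*}$ persists for Lie superalgebras. For the capable direction one can sidestep the multiplier computation entirely by exhibiting a concrete cover: the four-dimensional filiform Lie algebra $K=\langle e_1,e_2,e_3,e_4\mid [e_1,e_2]=e_3,\ [e_1,e_3]=e_4\rangle$ has $Z(K)=\langle e_4\rangle$ and $K/Z(K)\cong H(1,0)$, giving an independent confirmation that $H(1,0)$ is capable.
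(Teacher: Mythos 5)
The paper does not actually prove this statement; it is imported as a black box from \cite[Theorem 6.4]{Padhandetec}, so there is no internal proof to measure you against. Your blind reconstruction is correct and is essentially the standard epicenter argument that the cited source (and this paper's own machinery for pairs) is built on. The reduction is sound: $Z^{*}(H(m,n))\subseteq Z(H(m,n))=\langle z\rangle$ is one-dimensional, so everything hinges on whether $z$ is in the epicenter, and the dimension-count criterion you use is exactly the $I=L$ specialization of the tools the paper develops for pairs (Proposition \ref{prop6} and Corollary \ref{coro852}, together with Theorem \ref{thm4.17} and the identification $Z^{\wedge}=Z^{*}$). Your arithmetic is right in both graded components: for $m+n\ge 2$ the even parts of $\mathcal{M}(A(2m\mid n))$ and $\mathcal{M}(H(m,n))$ from Theorems \ref{th3.3} and \ref{th3.4} differ by exactly $1$ and the odd parts agree, so the Ganea-type sequence forces injectivity of $\mathcal{M}(L)\to\mathcal{M}(L/\langle z\rangle)$ and hence non-capability; the cases $(0,1)$ and $(1,0)$ work out as you state, and the filiform cover $K$ with $K/Z(K)\cong H(1,0)$ is a clean independent confirmation of the capable case. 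You correctly identify where the genuine content lies: the implication ``$\alpha$ injective $\Rightarrow N\subseteq Z^{*}(L)$'' (the direction needed for non-capability) must be established in the $\mathbb{Z}_2$-graded setting, and that is precisely what \cite{Padhandetec} supplies; within this paper one would cite Proposition \ref{prop6}, though note that its hypothesis ``$K$ containing $I$'' is evidently a misprint for the reverse inclusion, since otherwise it could never be applied with $I=L$ and $K=\langle z\rangle$. Modulo leaning on that imported criterion, which is unavoidable for a statement of this kind, the proof is complete.
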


The following theorem determines the structure of capable Lie superalgebra $L$ of dimension $(k \mid l)$ with $\dim L^2 =1$.

\begin{theorem}\label{th4.4}\cite[Proposition 3.4]{SN2018b}\cite[Theorem 6.9]{Padhandetec} \label{th5a}
Let $L$ be a nilpotent Lie superalgebra of dimension $(k \mid l)$ with $\dim L'=(r \mid s)$, where $r+s=1$. If $r=1, s=0$ then $L \cong H(m,n)\oplus A(k-2m-1 \mid l-n)$ for $m+n\geq 1$. If $r=0, s=1$ then $L \cong H_{m} \oplus A(k-m \mid l-m-1)$. Moreover, $L$ is capable if and only if either $L \cong H(1 , 0)\oplus A(k-3 \mid l)$ or $L \cong H_{1}\oplus A(k-1 \mid l-2)$.
\end{theorem}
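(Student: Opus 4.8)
The plan is to split the argument into a structural part and a capability part. For the structural decomposition I would first note that nilpotency forces $L'$ to be central: since $\dim L'=1$ and the lower central series of a nilpotent superalgebra strictly descends to $0$, we get $[L',L]\subsetneq L'$ and hence $[L',L]=0$, so $L'=\langle z\rangle\subseteq Z(L)$. Writing $[x,y]=\beta(x,y)z$ defines a graded bilinear form $\beta$ on $L$ whose radical is exactly $Z(L)$, so $\beta$ descends to a nondegenerate graded form on $L/Z(L)$. If $z$ is even ($r=1,s=0$), parity kills the even--odd bracket, the even part carries a nondegenerate alternating form (yielding a symplectic basis $x_1,\dots,x_{2m}$ with $[x_i,x_{m+i}]=z$) and the odd part a nondegenerate symmetric form (diagonalizable since $\mathrm{char}\,\mathbb{F}\neq 2$, yielding $y_1,\dots,y_n$ with $[y_j,y_j]=z$); a central complement of $\langle z\rangle$ in $Z(L)$ splits off as an abelian ideal, giving $L\cong H(m,n)\oplus A(k-2m-1\mid l-n)$. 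If $z$ is odd ($r=0,s=1$), parity kills the even--even and odd--odd brackets, so $\beta$ is a nondegenerate pairing between the even and odd parts of $L/Z(L)$; choosing dual bases gives $[x_j,y_j]=z$ and $L\cong H_m\oplus A(k-m\mid l-m-1)$. This reproduces the classification of Theorem \ref{th3.4} and Theorem \ref{th3.6}, and I would cite \cite{Nayak2018, SN2018b} for the normalization details.

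Recall that $L$ is capable precisely when its epicenter vanishes, $Z^{*}(L)=0$, and that $Z^{*}(L)=Z^{\wedge}(L)$ is the exterior center. For the capability part the key step is a reduction lemma: for Lie superalgebras $L_1,L_2$ with nontrivial abelianizations $L_i/L_i'\neq 0$,
$$Z^{\wedge}(L_1\oplus L_2)=\big(Z^{\wedge}(L_1)\cap L_1'\big)\oplus\big(Z^{\wedge}(L_2)\cap L_2'\big).$$
I would prove this from the $\mathbb{Z}_2$-graded analogue of the direct-sum decomposition of the nonabelian exterior product,
$$(L_1\oplus L_2)\wedge(L_1\oplus L_2)\cong (L_1\wedge L_1)\oplus (L_2\wedge L_2)\oplus \big((L_1/L_1')\otimes(L_2/L_2')\big).$$
Under this decomposition $(u_1,u_2)\wedge(v_1,v_2)$ has components $u_1\wedge v_1$, $u_2\wedge v_2$ and a single cross term $\bar u_1\otimes\bar v_2-\bar v_1\otimes\bar u_2$; requiring it to vanish for all $(v_1,v_2)$ gives $u_i\in Z^{\wedge}(L_i)$, while specialising $v_2=0$ and then $v_1=0$ forces $\bar u_2=0$ and $\bar u_1=0$ (using $L_1/L_1'\neq 0\neq L_2/L_2'$), i.e.\ $u_i\in L_i'$. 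Combining and using $Z^{*}=Z^{\wedge}$ proves the lemma.

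Applying the lemma with $L_1=H$ (the Heisenberg summand, even or odd) and $L_2=A$ abelian and nonzero, the second summand drops out because $A'=0$; moreover $Z^{*}(H)\subseteq Z(H)=H'$, so $Z^{*}(H)\cap H'=Z^{*}(H)$ and hence $Z^{*}(H\oplus A)=Z^{*}(H)$. When $A=0$ this identity is trivial. Therefore $L=H\oplus A$ is capable if and only if its Heisenberg summand $H$ is capable. Invoking Theorem \ref{th4.33} (so $H(m,n)$ is capable precisely when $m=1,n=0$) and Theorem \ref{thm555} (so $H_m$ is capable precisely when $m=1$), the capable cases are exactly $L\cong H(1,0)\oplus A(k-3\mid l)$ and $L\cong H_1\oplus A(k-1\mid l-2)$, which is the asserted conclusion.

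The main obstacle I anticipate is establishing the graded direct-sum decomposition of the exterior product cleanly: one must fix the correct sign and degree conventions for the super nonabelian exterior product, verify that the cross summand is genuinely $(L_1/L_1')\otimes(L_2/L_2')$ with no surviving bracket contributions, and check that the exterior-center computation survives the super signs. As a consistency check that sidesteps this machinery, one can instead compute $\dim\mathcal{M}(L)$ from Theorems \ref{th3.3}, \ref{th3.4}, \ref{th3.6} together with the super Künneth formula for $\mathcal{M}(H\oplus A)$ and compare it with $\dim\mathcal{M}(L/Z(L))$ through the central-extension (Ganea-type) exact sequence, whose failure of injectivity detects $Z^{*}(L)=0$; this dimension count should independently confirm that capability is governed solely by the Heisenberg summand.
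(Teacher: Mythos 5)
This theorem is quoted in the paper from \cite{SN2018b} and \cite{Padhandetec} and no proof is given in the text, so there is nothing internal to compare against; your reconstruction follows the same standard route as those sources (centrality of $L'$ under nilpotency, the induced nondegenerate graded form on $L/Z(L)$ split by parity, and reduction of capability to the Heisenberg summand via $Z^{\wedge}(L_1\oplus L_2)=(Z^{\wedge}(L_1)\cap L_1')\oplus(Z^{\wedge}(L_2)\cap L_2')$ combined with Theorems \ref{th4.33} and \ref{thm555}), and it is correct. Two small caveats: normalizing the odd part to $[y_j,y_j]=z$ needs more than $\mathrm{char}\,\mathbb{F}\neq 2$ (one needs square roots, as implicitly assumed in the cited classification), and your closing ``consistency check'' via $\dim\mathcal{M}(L)$ versus $\dim\mathcal{M}(L/Z(L))$ only tests whether $Z(L)\subseteq Z^{*}(L)$, not whether $Z^{*}(L)=0$, so it cannot replace the main argument.
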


Now we determine the structure of non-abelian graded ideals of a Heisenberg Lie superalgebra, which follows immediate from Theorem \ref{th4.4}.

\begin{lemma}\label{lemm78}
Let $I$ and $J$ be two ideals of a Heisenberg Lie superalgebras $ H(m,n)$ and $H_m$ respectively, with $\dim I=(k \mid h),~ 1\leq k \leq 2m+1,~ 1\leq h\leq n$ and $\dim J=(p \mid q),~ 1\leq p\leq m, ~1\leq q\leq m+1$. Then $I$ is Heisenberg or $I\cong H(r,s)\oplus A(k-2r-1\mid h-s) $ for some $1\leq r \leq m$ and $1\leq s \leq n$ . Similarly, $J$ is Heisenberg or $J\cong H_r\oplus A(p-r\mid q-r-1 ) $ for some $1\leq r \leq m$ .
\end{lemma}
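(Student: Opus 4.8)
The plan is to realize $I$ and $J$ as nilpotent Lie superalgebras whose derived subalgebra has total dimension one, so that their structure can be read off directly from Theorem~\ref{th4.4}.

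I would first establish nilpotency: both $H(m,n)$ and $H_m$ are $2$-step nilpotent (since $[H,H]=Z(H)$ and $[Z(H),H]=0$), and a graded subalgebra --- in particular a graded ideal --- of a nilpotent Lie superalgebra is again nilpotent, so $I$ and $J$ are nilpotent.

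Next I would pin down the derived subalgebra. For $H(m,n)$ we have $Z(H(m,n))=H(m,n)'=\langle z\rangle$ with $z$ even, so $I'=[I,I]\subseteq[H(m,n),H(m,n)]=\langle z\rangle$; thus $I'$ is a subspace of $\langle z\rangle$ and $\dim I'\in\{(0\mid0),(1\mid0)\}$. Since the lemma treats \emph{non-abelian} ideals, $I'\neq0$, forcing $I'=\langle z\rangle$ with $\dim I'=(1\mid0)$ and $z\in I$. Applying Theorem~\ref{th4.4} to $L=I$ of dimension $(k\mid h)$ in the branch $r=1,\ s=0$ gives $I\cong H(r,s)\oplus A(k-2r-1\mid h-s)$ for some $r+s\ge1$; the alternative that $I$ itself is Heisenberg is precisely the subcase in which the abelian summand vanishes, i.e. $k-2r-1=0$ and $h-s=0$. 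The argument for $J\subseteq H_m$ is identical in form, the only change being that the central generator of $H_m$ is odd: then $J'\subseteq\langle z\rangle$ with $z$ odd gives $\dim J'=(0\mid1)$, placing us in the branch $r=0,\ s=1$ of Theorem~\ref{th4.4} and yielding $J\cong H_r\oplus A(p-r\mid q-r-1)$, Heisenberg exactly when its abelian part is trivial.

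The remaining bookkeeping is to read off the parameter ranges: comparing the homogeneous dimensions of the Heisenberg summand $H(r,s)$ with those of $I\subseteq H(m,n)$ gives $2r+1\le k\le2m+1$ and $s\le h\le n$, hence $r\le m$ and $s\le n$ (and similarly for the index of $H_r$ inside $H_m$). I expect the only point needing genuine care --- rather than a hard obstacle --- to be confirming that the parity of the central generator is inherited correctly by $I'$ and $J'$, since this is exactly what selects the correct branch of Theorem~\ref{th4.4}; once the hypotheses of that theorem are verified, the conclusion is an immediate specialization.
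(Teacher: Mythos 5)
Your proposal is correct and follows exactly the route the paper intends: the paper gives no written proof, stating only that the lemma "follows immediate from Theorem \ref{th4.4}", and your argument is precisely the fleshing-out of that remark (nilpotency of the ideal, $I'\subseteq\langle z\rangle$ forcing a one-dimensional derived subalgebra of the correct parity, then specializing Theorem \ref{th4.4}). The parity check you flag as the delicate point is indeed the step that selects the branch, and you handle it correctly.
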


Let $X=X_{\bar{0}} \cup X_{\bar{1}}$ be a totally ordered $\mathbb{Z}_{2}$-graded set. Let $\Gamma(X)$ be the groupoid of non-associative monomials in the alphabet $X$, $u \circ v=(u)(v)$ for $u,v \in \Gamma (X)$, and $S(X)$ be the free semigroup of associative words with the bracket removing homomorphism $-: \Gamma (X)\longrightarrow S(X) $. For $u=x_{1}\ldots x_{n} \in S(X),~x_{i} \in X$, we consider the word length $l_{X}(u)=n$, the multi degree $m(u)$, $|u|=\sum^{n} _{i=1} |x_{i}| \in \mathbb{Z}_{2}$. Now let $K$ be a commutative ring having identity element $1$, and let $A(X)$ and $F(X)$ be the free associative and non-associative $K$-algebras respectively. Let $A(X)_{\sigma}, F(X)_{\sigma}$  for $\sigma \in \mathbb{Z}_{2}$ be the $K$-linear spans of the subsets $S(X)_{\sigma}$ and $\Gamma(X)_{\sigma}$ respectively, $A(X)$ and $F(X)$ being the $\mathbb{Z}_{2}$-graded associative and non-associative algebras respectively. Let $I$ be the ideal generated by the homogeneous elements of the form $x\circ y - (-1)^{|x||y|} y\circ x$ and $(x\circ y)\circ z-x\circ (y \circ z)-(-1)^{|x||y|} y \circ (x\circ z)$, for $x, y \in \Gamma(X)$ then $L(X)=F(X)/I$, is the free Lie $K$-superalgebra over $X$ (see \cite{YAM2000}).

\smallskip

Suppose the set $S(X)$ is ordered lexicographically.
A monomial $u \in \Gamma (X)$ is said to be regular if either $u \in X$ or;
\begin{enumerate}
\item $u=u_{1}\circ u_{2}$ where $u_{1},~ u_{2}$ are regular monomials with $\overline{u_{1}}> \overline{u_{2}}$,
\item $u=(u_{1}\circ u_{2})\circ u_{3}$ with $\overline{u_{2}} \leq \overline{u_{3}}$. 
\end{enumerate}

A monomial $u \in \Gamma (X)$ is said to be $s$-regular if either $u$ is a regular monomial or $u=(v)(v)$ with $v$ a regular monomial and $|v|=1$. Then the set of all images of the s-regular monomials form a basis of the free Lie superalgebra $L(X)$. The analogue of Witt's formula can be seen in Corollary 2.8 in \cite{YAM2000}. 
\begin{theorem}\label{t11}
Let $X=X_{\bar{0}} \cup X_{\bar{1}}$, $X_{\bar{0}}=\{x_{1},\ldots,x_{m} \},~X_{\bar{1}}=\{y_{1},\ldots,y_{n} \}$ be a totally ordered $\mathbb{Z}_{2}$-graded set and $L(X)$ be the free Lie superalgebra, $\mu (l)$ the M\"{o}bius function, and $W(\alpha_{1},\ldots,\alpha_{m+n})$ the rank of the free module of elements of multi degree $\alpha=(\alpha_{1},\ldots,\alpha_{m+n})$ in the free Lie algebra of rank $m+n$, 
$$ W(\alpha_{1},\ldots,\alpha_{m+n})=~\dfrac{1}{|\alpha|}\sum _{e|\alpha}\mu(e)\dfrac{(|\alpha|/e)!}{(\alpha/e)!},$$
where $|\alpha|=\sum ^{m+n}_{i=1}\alpha_{i}$. Let $SW(\alpha_{1},\ldots,\alpha_{m+n})$ be the rank of the free module of elements of multi degree $\alpha=(\alpha_{1},\ldots,\alpha_{m+n})$ in the free Lie superalgebra $L(X)$ of rank $m+n$. Then 
$$ SW(\alpha_{1},\ldots,\alpha_{m+n})=W(\alpha_{1},\ldots,\alpha_{m+n})+~\beta W\left(\frac{\alpha_{1}}{2},\ldots,\frac{\alpha_{m+n}}{2}\right), $$
where
$$
\beta=
\begin{cases}
 0\;\;if~ there~exists~an~i~such ~that~ \alpha_{i}~ is~odd,~or~if~\frac{1}{2}\sum ^{m+n}_{i=m+1}\alpha_{i}|\alpha_{i}|=1, \\
1\;\; otherwise.
   \end{cases}
$$

\end{theorem}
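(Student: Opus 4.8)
The plan is to count, for each fixed multidegree $\alpha=(\alpha_1,\dots,\alpha_{m+n})$, the number of $s$-regular monomials of that multidegree, since by the result quoted immediately before the statement (Corollary 2.8 of \cite{YAM2000}) the images of the $s$-regular monomials form a basis of the free Lie superalgebra $L(X)$; hence $SW(\alpha)$ equals this count. The key structural observation is that the defining description of $s$-regular monomials partitions them into two disjoint families: the regular monomials, and the ``squares'' $(v)\circ(v)$ with $v$ regular and $|v|=\bar 1$. So I would write $SW(\alpha)=R(\alpha)+Q(\alpha)$, where $R(\alpha)$ counts regular monomials of multidegree $\alpha$ and $Q(\alpha)$ counts the admissible squares, and treat the two summands separately.

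For $R(\alpha)$ I would argue that regularity is a purely combinatorial condition on the lexicographically ordered free groupoid $\Gamma(X)$ and does not see the $\mathbb{Z}_2$-grading; consequently the number of regular monomials of multidegree $\alpha$ on the $m+n$ letters is exactly the rank of the multidegree-$\alpha$ component of the ordinary free Lie algebra on $m+n$ generators. This is the classical Witt count: starting from the Poincar\'e--Birkhoff--Witt factorization of the free associative algebra and applying M\"obius inversion to the resulting multigraded generating function gives $R(\alpha)=W(\alpha)=\frac{1}{|\alpha|}\sum_{e\mid\alpha}\mu(e)\frac{(|\alpha|/e)!}{(\alpha/e)!}$. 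I would take this as the formula for $W$ supplied in the statement, so the only thing to check is the grading-independence of the regular-monomial bijection, which is immediate.

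The heart of the argument is $Q(\alpha)$. A square $(v)\circ(v)$ has multidegree $2\,m(v)$, so it can have multidegree $\alpha$ only when every $\alpha_i$ is even, i.e.\ $\alpha=2\gamma$ with $\gamma=\alpha/2$ integral; this already forces $Q(\alpha)=0$ (hence $\beta=0$) as soon as some $\alpha_i$ is odd, giving the first clause. When $\alpha=2\gamma$, a square contributes precisely when its root $v$ is regular of multidegree $\gamma$ and odd. Here I would use that the parity $|v|\equiv\sum_{i=m+1}^{m+n} m(v)_i \pmod{2}$ is determined by the multidegree alone, so all regular monomials of multidegree $\gamma$ share one common parity; therefore $Q(\alpha)$ is either $W(\gamma)=W(\alpha/2)$, when that common parity is odd, or $0$, when it is even. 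Setting $\beta$ equal to this indicator yields $SW(\alpha)=W(\alpha)+\beta\,W(\alpha/2)$.

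I expect the main obstacle to be the precise bookkeeping of the correction term $\beta$: one must verify that the parity is genuinely constant across the regular monomials of a fixed half-multidegree and then match the resulting odd-degree condition to the explicit dichotomy in the statement, paying particular attention to the degenerate low-degree cases (for instance the single-generator squares $[y_j,y_j]$, where $W(\alpha)$ vanishes but the square term does not). The genuinely hard input --- that the $s$-regular monomials are linearly independent and spanning --- is not reproved here but imported wholesale from \cite{YAM2000}, so the remaining work is the combinatorial M\"obius inversion for $R(\alpha)$ and the parity analysis for $Q(\alpha)$.
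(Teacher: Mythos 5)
The paper does not actually prove Theorem \ref{t11}: it is quoted with the remark that ``the analogue of Witt's formula can be seen in Corollary 2.8 in \cite{YAM2000}'', so there is no in-paper argument to compare against. What you have written is essentially the proof from that reference: count the $s$-regular monomials of fixed multidegree $\alpha$, split them into the regular ones (counted by the classical Witt number $W(\alpha)$, since regularity is a purely lexicographic condition blind to the $\mathbb{Z}_2$-grading) and the squares $(v)(v)$ with $v$ regular and odd (counted by $W(\alpha/2)$ when $\alpha/2$ is integral of odd parity, and by $0$ otherwise, since the parity of $v$ depends only on its multidegree). That outline is correct, and the two facts you import wholesale --- that the $s$-regular monomials form a basis, and that the regular monomials of multidegree $\alpha$ number $W(\alpha)$ --- are exactly the right ones to import.

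One concrete warning about your final step. You defer ``matching the resulting odd-degree condition to the explicit dichotomy in the statement'', but that matching cannot be carried out as the statement is printed. Your analysis gives the correct condition $\beta=1$ precisely when every $\alpha_i$ is even and $\frac{1}{2}\sum_{i=m+1}^{m+n}\alpha_i$ is odd, whereas the displayed condition sets $\beta=0$ when $\frac{1}{2}\sum_{i=m+1}^{m+n}\alpha_i|\alpha_i|=1$, which under any reasonable reading of $|\alpha_i|$ either inverts the parity or makes the clause vacuous. The degenerate case you yourself flag settles this: for $m=0$, $n=1$, $\alpha=(2)$ one has $W(2)=0$, $W(1)=1$ and $[y,y]\neq 0$ (characteristic $\neq 2,3$), so $SW(2)=1$ forces $\beta=1$ even though $\frac{1}{2}\alpha_1|y|=1$; similarly for $m=0$, $n=2$, $\alpha=(2,2)$ the root $v$ has even parity, so $\beta$ must be $0$ although all $\alpha_i$ are even. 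So prove and state the corrected dichotomy rather than trying to reconcile your (correct) parity condition with the transcription in the theorem.
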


\begin{theorem}\label{c12}\cite{pet2000}
	\begin{align*}
		\begin{split}
	& dim L_r=\frac{1}{r}\sum_{a|r}^{}\mu(a)(m-(-1)^an)^{r/a}\\
	& dim L_{r,+}=\frac{1}{r}\sum_{a|r}^{}\mu(a)\frac{(m-(-1)^an)^{r/a}+(m-n)^{r/a}}{2}\\
	& dim L_{r,-}=\frac{1}{r}\sum_{a|r}^{}\mu(a)\frac{(m-(-1)^an)^{r/a}-(m-n)^{r/a}}{2}\\
	& sdimL_r=\frac{1}{r}\sum_{a|r}^{}\mu(a)(m-n)^{r/a}\\
	& dimL_\alpha=\frac{(-1)^{|\alpha_-|}}{|\alpha|}\sum_{a|r}^{}\mu(a)\frac{(|\alpha|/a)!(-1)^{|\alpha_-|/a}}{(\alpha_1/a)!\cdots(\alpha_{m+n}/a)!}
	\end{split}
	\end{align*}

\end{theorem}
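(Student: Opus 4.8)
The plan is to treat the multidegree formula for $\dim L_\alpha$ as the master identity and to deduce the other four formulas from it by specialization. First I would recall that the free associative superalgebra $A(X)$ is the universal enveloping algebra of the free Lie superalgebra $L(X)$, so the super Poincar\'e--Birkhoff--Witt theorem (equivalently, the $s$-regular monomial basis described above; valid here since $\mathrm{char}\,\mathbb{F}\neq 2,3$) identifies $A(X)$, graded by the number of occurrences of each generator, with the supersymmetric algebra on $L(X)$: the even part of $L(X)$ contributes polynomial (symmetric) factors and the odd part contributes exterior factors. A homogeneous Lie element of multidegree $\alpha=(\alpha_{1},\dots,\alpha_{m+n})$ has forced parity $|\alpha_-|\bmod 2$, where $|\alpha_-|=\sum_{i=m+1}^{m+n}\alpha_{i}$, so $\dim L_\alpha$ is an unambiguous integer.

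Assigning a variable $t_{i}$ to the $i$-th generator and writing $t^{\alpha}=\prod_{i}t_{i}^{\alpha_{i}}$, counting words in $A(X)$ gives the multigraded Hilbert series $\big(1-\sum_{i}t_{i}\big)^{-1}$, while the PBW factorization gives
\[
\Big(1-\textstyle\sum_{i}t_{i}\Big)^{-1}=\prod_{\alpha}\big(1-(-1)^{|\alpha_-|}t^{\alpha}\big)^{-(-1)^{|\alpha_-|}\dim L_\alpha},
\]
the factor being $(1-t^{\alpha})^{-\dim L_\alpha}$ when $|\alpha_-|$ is even and $(1+t^{\alpha})^{\dim L_\alpha}$ when $|\alpha_-|$ is odd. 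Taking logarithms, expanding the right side as $\sum_{\alpha}\dim L_\alpha\sum_{k\ge1}k^{-1}(-1)^{(k+1)|\alpha_-|}t^{k\alpha}$ and the left side by the multinomial theorem, and comparing coefficients of a fixed $t^{\beta}$, I obtain
\[
\frac{1}{|\beta|}\frac{|\beta|!}{\beta_{1}!\cdots\beta_{m+n}!}=\sum_{k\mid r}\frac{(-1)^{(k+1)|\alpha_-|}}{k}\,\dim L_{\beta/k},\qquad \alpha=\beta/k,\ \ r=\gcd(\beta_{1},\dots,\beta_{m+n}).
\]

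The decisive observation is that $k\,|\alpha_-|=|\beta_-|$, so $(-1)^{k|\alpha_-|}=(-1)^{|\beta_-|}$ is independent of $k$; setting $Q(\alpha)=(-1)^{|\alpha_-|}|\alpha|\dim L_\alpha$, the relation collapses to the genuine divisor sum $(-1)^{|\beta_-|}\,|\beta|!/\prod_i\beta_{i}!=\sum_{k\mid r}Q(\beta/k)$. M\"obius inversion over the divisor lattice of $r=\gcd(\beta_{i})$ (using $\gcd(\beta/k)=r/k$) then yields $Q(\beta)=\sum_{a\mid r}\mu(a)(-1)^{|\beta_-|/a}(|\beta|/a)!/\prod_i(\beta_{i}/a)!$, which after dividing by $(-1)^{|\beta_-|}|\beta|$ is exactly the stated formula for $\dim L_\alpha$.

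Finally I would pass to the one-variable formulas by summation over multidegrees. Writing $\dim L_{r}=\sum_{|\alpha|=r}\dim L_\alpha$, substituting the multidegree formula, interchanging the two sums, and putting $\beta=\alpha/a$ reduces the inner sum to $\sum_{|\beta|=r/a}(-1)^{(a+1)|\beta_-|}(r/a)!/\prod_i\beta_{i}!$, which by the multinomial theorem with weight $1$ on the even slots and $(-1)^{a+1}$ on the odd slots equals $(m-(-1)^{a}n)^{r/a}$; hence $\dim L_{r}=\tfrac1r\sum_{a\mid r}\mu(a)(m-(-1)^{a}n)^{r/a}$. The identical computation carrying the extra supertrace sign $(-1)^{|\alpha_-|}$ turns the weights into $\pm1$ and produces $sdim L_{r}=\tfrac1r\sum_{a\mid r}\mu(a)(m-n)^{r/a}$, after which $\dim L_{r,\pm}=\tfrac12(\dim L_{r}\pm sdim L_{r})$ gives the two remaining formulas. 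I expect the main obstacle to be the sign bookkeeping in the PBW factorization---correctly distinguishing the exterior contribution of odd Lie elements from the symmetric contribution of even ones---together with spotting the simplification $(-1)^{k|\alpha_-|}=(-1)^{|\beta_-|}$, without which the coefficient relation is not a true divisor convolution and M\"obius inversion cannot be applied.
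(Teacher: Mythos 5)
The paper offers no proof of this theorem to compare against: it is quoted verbatim from Petrogradsky \cite{pet2000} as a known result, so any comparison is necessarily with the literature rather than with the text. Your argument is a correct, self-contained derivation, and it follows the classical route (essentially Petrogradsky's own): identify $A(X)$ with $U(L(X))$, use super-PBW to factor the multigraded Hilbert series $\bigl(1-\sum_i t_i\bigr)^{-1}$ into symmetric factors for even Lie components and exterior factors for odd ones, take logarithms, and Möbius-invert. The sign bookkeeping is handled correctly — in particular the observation that $(-1)^{k|\alpha_-|}=(-1)^{|\beta_-|}$ for $\beta=k\alpha$, which is exactly what makes the coefficient relation a genuine divisor convolution, and the weighted multinomial evaluation $\sum_{|\beta|=r/a}(-1)^{(a+1)|\beta_-|}(r/a)!/\prod_i\beta_i!=(m-(-1)^a n)^{r/a}$ that yields the single-degree formulas, with $\dim L_{r,\pm}=\tfrac12(\dim L_r\pm\operatorname{sdim}L_r)$ closing the list. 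Two small points worth making explicit: the symbol $r$ in the multidegree formula is undefined in the statement and must be read as $\gcd(\alpha_1,\dots,\alpha_{m+n})$, as you correctly do; and the appeal to super-PBW (equivalently the $s$-regular basis) is where the hypothesis $\mathrm{char}\,\mathbb{F}\neq 2,3$ enters, so it deserves the one-line justification you give it.
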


\section{ Exterior-Products of Lie Superalgebras}

Let $I$ be a graded ideal of a Lie superalgebra $L$. Then $(L,I)$ is called a pair of Lie superalgebras. In this section we recall some of the known notation and results from \cite{Padhandetec, GKL2015}. Recently, X. Garc\'{i}a-Mart\'{i}nez et al. \cite{GKL2015} introduced the notions of non-abelian tensor product of Lie superalgebras and exterior product of Lie superalgebras over $K$, where $K$ is a commutative ring. Here we will state some similar definition to \cite{GKL2015} for pair of Lie superalgebras.

 \begin{definition}
 Let $(L,I)$ be a pair of Lie superalgebras. A $\mathbb{F}$-bilinear map of even degree is said to be an action of $L$ on $I$ \[L\times I \longrightarrow I,~~~~~~~(l, i)\mapsto {}^ li, \]
 if 
 \begin{enumerate}
 \item ${}^{[l , l']} i = {}^l({}^{l'}i)-(-1)^{|l||l'|}~  {}^{l'}({}^{l}i),$
 \item ${}^l{[i , i']}=[{}^li, i']+(-1)^{|l||i|}[i , {}^li'],$
\end{enumerate}
for all homogeneous $l, l' \in L$ and $i, i' \in I$. 
\end{definition}

\begin{example}\label{ex999}
For any pair of Lie superalgebras $(L,I)$, the Lie multiplication induces an action of $L$ on $I$ via ${}^li=[l , i]$, and an action of $L$ on itself via ${}^ll'=[l , l']$ . 
\end{example}

 Let $(L,I)$ be a pair of Lie superalgebras with action on each others and let $T$ be a Lie superalgebra then we have the following defintion.

\begin{definition}
 A bilinear function $f:L\times I\longrightarrow T$ is called Lie pairing if the following relations are satidfied:
\begin{enumerate}
\item $f([l , l'], i)= f(l, {}^ {l'}{i}) -(-1)^{|l||l'|}f(l', {}^{l}{i})$,
\item  $f(l, [i,i'])=(-1)^{|i'|(|l|+|i|)}f({}^{i'}{l}, i)-(-1)^{|l||i|}f({}^{i}{l}, i')$,
\item $f({}^{i}{l} , {}^{l'}{i'})=-(-1)^{|l||i|}[f(l, i),f(l', i')]),$  
\end{enumerate} 
for every $l, l' \in L_{\overline{0}}\cup  L_{\overline{1}}$ and $i , i' \in I_{\overline{0}}\cup  I_{\overline{1}}$.
\end{definition}

 
\begin{definition} 
 A crossed module of Lie superalgebras is a homomorphism of Lie superalgebras $\partial : I\longrightarrow L$ with an action of $L$ on $I$ satisfying 
\begin{enumerate}
\item $\partial ({}^l{i})=[l,\partial(i)],$
\item ${}^{\partial(i)}{i'}=[i, i'],$ for all $l \in L$ and $i, i' \in I$.
\end{enumerate} 
\end{definition}
For any pair of Lie superalgebras $(L,I)$, the canonical homomorphisms $\partial' : I\longrightarrow L$ and $\partial : L\longrightarrow L$ with the actions defined in Example \ref{ex999} are crossed modules and we called it the canonical crossed modules. 
Let $X_{L , I}$ be the $\mathbb{Z}_{2}$-graded set of all symbols $l\otimes i$, where $l \in L_{\overline{0}}\cup  L_{\overline{1}}$, $i \in I_{\overline{0}}\cup  I_{\overline{1}}$ and the $\mathbb{Z}_{2}$-gradation is given by $|l\otimes i|=|l|+|i|$.

\begin{definition}
The non-abelian tensor product of $L$ and $I$, denoted by $L \otimes I$, as the Lie superalgebra generated by $X_{L , I}$  and subject to the relations:
\begin{enumerate} 
 	\item $\lambda (l \otimes i)=\lambda l \otimes i= l \otimes \lambda i$,
 	\item $(l + l')\otimes i= l\otimes i +l'\otimes i$,  where $l , l'$ have the same degree,\\
 	$l \otimes (i + i')=l \otimes i + l \otimes i'$,  where $i , i'$ have the same degree,
 	
 	\item $[l , l']\otimes i= (l\otimes {}^ {l'}{i}) -(-1)^{|l||l'|}(l'\otimes {}^{l}{i})$,\\
 	$l\otimes [i,i']=(-1)^{|i'|(|l|+|i|)}({}^{i'}{l}\otimes i)-(-1)^{|l||i|}({}^{i}{l}\otimes i')$,
 	
 	\item $[l\otimes i,l'\otimes i']=-(-1)^{|l||i|}({}^{i}{l} \otimes {}^{l'}{i'}),$      
 \end{enumerate}
 for every $\lambda \in \mathbb{F}, l, l' \in L_{\overline{0}}\cup  L_{\overline{1}}$ and $i , i' \in I_{\overline{0}}\cup  I_{\overline{1}}$.
\end{definition}

It easy to check that the mapping \\
 \[L\times I\longrightarrow L\otimes I,\, (l,i)\longrightarrow l\otimes i\]
 is a lie pairing. The tensor product $L \otimes I$ has $\mathbb{Z}_{2}$-grading given by $(L \otimes I)_{\alpha}=\oplus_{\beta+\gamma=\alpha}  (L_{\beta}+I_{\gamma})$ for $\alpha, \beta, \gamma \in \Z_2$. If $L=L_{\overline{0}}$ and $I=I_{\overline{0}}$ then $L \otimes I$ is the non-abelian tensor product of Lie algebras introduced and studied \cite{Ellis1991}.

\begin{proposition}\label{prop636}\cite[Proposition 3.5]{GKL2015}
If the Lie superalgebras $M$ and $N$ act trivially on each other, then $M\otimes N$ is an abelian Lie superalgebra and there is an isomorphism of supermodules 
  	\[M\otimes N \cong M^{ab}\otimes _{\mathbb{K}}N^{ab},\]
where $M^{ab}=M/[M, M]$ and $N^{ab}=N/[N, N]$. 
\end{proposition}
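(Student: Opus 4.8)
The plan is to exploit the four defining relations of the non-abelian tensor product, observing that the triviality of the mutual actions collapses the two structural relations (3) and (4). So first I would record the hypothesis in the form ${}^{m}n = 0$ and ${}^{n}m = 0$ for all homogeneous $m \in M$ and $n \in N$, and then feed this into relation (4). For any two generators $m \otimes n$ and $m' \otimes n'$,
\[
[m \otimes n,\, m' \otimes n'] = -(-1)^{|m||n|}\,({}^{n}m \otimes {}^{m'}n') = 0,
\]
since ${}^{n}m = 0$ and $0 \otimes x = 0$ (the latter being relation (1) with $\lambda = 0$). Because the symbols $m \otimes n$ generate $M \otimes N$ as a Lie superalgebra and every bracket of generators vanishes, the whole algebra $M \otimes N$ is abelian, which is the first assertion.

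Next I would show that the tensor product descends to the abelianizations. Substituting the trivial actions into relation (3) gives
\[
[m,m'] \otimes n = (m \otimes {}^{m'}n) - (-1)^{|m||m'|}(m' \otimes {}^{m}n) = 0,
\]
and symmetrically $m \otimes [n,n'] = 0$. Hence $[M,M]\otimes N = 0$ and $M \otimes [N,N] = 0$ hold inside $M \otimes N$; equivalently, the symbol $m \otimes n$ depends only on the classes $\bar m = m + [M,M]$ and $\bar n = n + [N,N]$. Since $[M,M]$ and $[N,N]$ are graded ideals, $M^{ab}$ and $N^{ab}$ are $\mathbb{K}$-supermodules and $M^{ab}\otimes_{\mathbb{K}} N^{ab}$ carries the grading $|\bar m \otimes \bar n| = |m| + |n|$.

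I would then build the isomorphism explicitly. Define an even linear map $\phi : M \otimes N \to M^{ab}\otimes_{\mathbb{K}} N^{ab}$ on generators by $\phi(m \otimes n) = \bar m \otimes \bar n$: relations (1) and (2) are precisely $\mathbb{K}$-bilinearity and so are respected, the collapsed relation (3) maps to $\overline{[m,m']}\otimes\bar n = 0$ and $\bar m \otimes \overline{[n,n']} = 0$ (both true in the abelianizations), and relation (4) is respected because the bracket is zero on the abelian target. In the opposite direction, set $\psi(\bar m \otimes \bar n) = m \otimes n$; this is well-defined exactly because $[M,M]\otimes N = 0 = M \otimes [N,N]$ together with bilinearity guarantee independence of the chosen representatives. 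The maps $\phi$ and $\psi$ are mutually inverse on generators and degree-preserving, which yields the asserted isomorphism of supermodules. (One could also phrase the whole argument through the universal property: with trivial actions a Lie pairing out of $M \times N$ reduces to a $\mathbb{K}$-bilinear map that factors through $M^{ab}\times N^{ab}$ and lands in an abelian piece, which is precisely the universal property of $M^{ab}\otimes_{\mathbb{K}} N^{ab}$.)

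I expect the main obstacle to be the bookkeeping of well-definedness rather than any genuine difficulty: one must check that each of the four defining relations of $M \otimes N$ is sent to a valid identity in $M^{ab}\otimes_{\mathbb{K}} N^{ab}$, and, conversely, that the vanishing statements $[M,M]\otimes N = 0$ and $M \otimes [N,N] = 0$ are strong enough to make $\psi$ representative-independent. The essential structural point is that relation (3), once the actions are trivial, forces the non-abelian tensor product to factor through the abelianizations, after which its identification with the ordinary $\mathbb{K}$-supermodule tensor product is immediate.
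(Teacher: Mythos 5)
The paper does not prove this proposition at all: it is imported verbatim as a citation to \cite[Proposition 3.5]{GKL2015}, so there is no internal proof to compare against. Your argument is nonetheless a correct, self-contained verification from the defining relations: trivial actions annihilate relation (4), so every bracket of generators vanishes and the Lie superalgebra they generate is their linear span, hence abelian; they also annihilate relation (3), so $[M,M]\otimes N=0=M\otimes[N,N]$ and the generator $m\otimes n$ depends only on $\bar m$ and $\bar n$; the mutually inverse even maps $\phi$ and $\psi$ then give the identification with $M^{ab}\otimes_{\mathbb{K}}N^{ab}$, and the universal-property remark is an equally valid packaging of the same point. The one step worth writing out in the well-definedness of $\psi$ is that relation (2) only provides additivity for elements of equal degree, so you should invoke the gradedness of $[M,M]$: two homogeneous representatives of the same class in $M^{ab}$ have the same degree and differ by a sum of homogeneous brackets, each of which pairs to zero by the collapsed relation (3). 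With that caveat made explicit, the proof is complete.
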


\begin{proposition}\label{prop4}\cite[Proposition 3.8]{GKL2015}
Given a short exact sequence of Lie superalgebras
 $$(0, 0) \longrightarrow (K,L) \overset{(i, j)} \longrightarrow (M, N) \overset{(\phi, \psi)} \longrightarrow (P, Q) \longrightarrow (0, 0)$$ there is an exact sequence of Lie superalgebras 
 $$(K \otimes M) \rtimes (M \otimes L) \overset{\alpha} \longrightarrow M \otimes N \overset{\phi \otimes \psi} \longrightarrow P \otimes Q \longrightarrow 0.$$
 \end{proposition}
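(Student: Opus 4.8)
The plan is to follow the standard three-step template for proving right-exactness of a non-abelian tensor product, adapted to the graded (super) setting and to pairs. Identifying $K$ and $L$ with their images under the injection $(i,j)$, exactness of the sequence of pairs means precisely that the two rows $0\to K\to M\to P\to 0$ and $0\to L\to N\to Q\to 0$ are short exact sequences of Lie superalgebras; in particular $K=\ker\phi$, $L=\ker\psi$, and since $\psi=\phi|_N$ we get $L=K\cap N$. I will use repeatedly that, by relation (2) in the definition of the tensor product, $0\otimes x=0=x\otimes 0$, and that $\phi,\psi$ are even homomorphisms intertwining the mutual bracket actions, so that $\psi({}^m n)={}^{\phi(m)}\psi(n)$ and symmetrically.

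First I would verify that $\phi\otimes\psi\colon M\otimes N\to P\otimes Q$, defined on generators by $m\otimes n\mapsto \phi(m)\otimes\psi(n)$, is a well-defined homomorphism of Lie superalgebras. This reduces to checking that it carries each of the four defining relations of $M\otimes N$ to the corresponding relation of $P\otimes Q$; because $\phi,\psi$ are even and intertwine the actions, every relation is preserved, and the grading is respected since $|\phi(m)\otimes\psi(n)|=|m|+|n|$. Surjectivity is then immediate: each generator $p\otimes q$ of $P\otimes Q$ equals $(\phi\otimes\psi)(m\otimes n)$ for any lifts $\phi(m)=p$, $\psi(n)=q$, and these generate $P\otimes Q$. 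This yields exactness at $P\otimes Q$.

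Next I would describe $\alpha$ and prove $\Ima\,\alpha\subseteq\ker(\phi\otimes\psi)$. The map $\alpha$ is the one induced by the inclusions $i\colon K\hookrightarrow M$ and $j\colon L\hookrightarrow N$: on the two factors of the semidirect product it sends the elementary generators to $k\otimes n$ (with $k\in K$) and to $m\otimes l$ (with $l\in L$) inside $M\otimes N$. One checks that $\alpha$ is a homomorphism out of the semidirect product, i.e. that it respects the bracket relation (4) and the induced action of one factor on the other, by a direct computation using relations (3)--(4). Applying $\phi\otimes\psi$ to these generators gives $\phi(k)\otimes\psi(n)=0\otimes\psi(n)=0$ and $\phi(m)\otimes\psi(l)=\phi(m)\otimes 0=0$, so $\Ima\,\alpha\subseteq\ker(\phi\otimes\psi)$.

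The remaining and hardest step is the reverse inclusion $\ker(\phi\otimes\psi)\subseteq\Ima\,\alpha$. Set $T=\Ima\,\alpha$, the graded subspace of $M\otimes N$ generated by all $k\otimes n$ and $m\otimes l$ with $k\in K$, $l\in L$; it is in fact a graded ideal, since relation (4) together with $[K,N]\subseteq K\cap N=L$ and $[M,L]\subseteq L$ shows $[M\otimes N,\,T]\subseteq T$. I would then prove $T=\ker(\phi\otimes\psi)$ by exhibiting an inverse to the induced surjection $(M\otimes N)/T\to P\otimes Q$: define $\theta\colon P\otimes Q\to (M\otimes N)/T$ on generators by $\theta(p\otimes q)=\overline{m\otimes n}$ for any lifts $m,n$. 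Independence of the lift follows from bilinearity, for replacing $m$ by $m'$ with $\phi(m')=\phi(m)$ gives $m\otimes n-m'\otimes n=(m-m')\otimes n\in T$ since $m-m'\in K$, and symmetrically for $n$ using $L$. The \emph{main obstacle} is to show that each defining relation of $P\otimes Q$ lifts to the corresponding relation of $M\otimes N$ modulo $T$, so that $\theta$ respects relations (1)--(4); this is the delicate part because it requires tracking the graded sign conventions of the actions through the choice of lifts in relations (3)--(4). Once $\theta$ is shown to be a well-defined homomorphism, it is manifestly a two-sided inverse of the map induced by $\phi\otimes\psi$, whence $(M\otimes N)/T\cong P\otimes Q$ and $\ker(\phi\otimes\psi)=T=\Ima\,\alpha$, completing the proof of exactness.
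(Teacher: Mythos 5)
The paper offers no proof of this proposition: it is imported verbatim from \cite[Proposition 3.8]{GKL2015}, so there is nothing internal to compare your argument against. That said, your outline is the standard right-exactness argument for non-abelian tensor products and is, in substance, the proof given in the cited source: well-definedness and surjectivity of $\phi\otimes\psi$ on generators, the containment $\Ima \alpha\subseteq\ker(\phi\otimes\psi)$ via $0\otimes x=x\otimes 0=0$, and the reverse containment by constructing an inverse $\theta\colon P\otimes Q\to (M\otimes N)/\Ima \alpha$ from lifts. Two points deserve explicit mention. First, you have silently corrected what appears to be a typo in the statement: for $\alpha$ to land in $M\otimes N$ the first factor of the semidirect product must be $K\otimes N$, not $K\otimes M$ (this is consistent with the special case recorded immediately after the proposition, where $N=M$ and $L=K$); you should flag the correction rather than pass over it. Second, the only substantive content of the hard inclusion is the verification that $\theta$ respects relations (1)--(4) in the presentation of $P\otimes Q$, and that is precisely the step you defer. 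It is routine but not vacuous: for each relation one chooses compatible homogeneous lifts (possible because $\phi$ and $\psi$ are even epimorphisms, hence surjective on each graded component), notes that the corresponding relation holds on the nose in $M\otimes N$ for those particular lifts, and then invokes lift-independence modulo $T=\Ima \alpha$; a complete write-up should carry out at least one of the sign-sensitive cases (3) or (4). With that step executed, your argument is correct and complete.
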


Specifically given a Lie superalgebra $M$ and a graded ideal $K$ of $M$ there is an exact sequence 
 \begin{equation}\label{eq1}
 	(K \otimes M) \rtimes (M \otimes K) \longrightarrow M \otimes M \longrightarrow (M/K) \otimes (M/K) \longrightarrow 0.
 \end{equation}

 \begin{definition}
Let $f$ be a lie pairing of a pair of Lie superalgebra $(L,I)$, with the canonical crossed modules, then it is called an exterior Lie pairing if the following relations are satisfied:
\begin{enumerate}
	\item $f(i, i')= -  (-1)^{|i||i'|}f(i', i)$,
	\item $f(i_{\overline{0}} ,i_{\overline{0}})=0$,
\end{enumerate} 
with $i,i' \in I_{\overline{0}} \cup I_{\overline{1}}, i_{\overline{0}} \in I_{\overline{0}}$.
 \end{definition}
Now let us consider a special case of \cite[Lemma 6.1]{GKL2015} when $\partial'$ and $\partial$ are the canonical crossed modules and is $I$ an ideal of $L$, then we have the flowing result.

\begin{proposition}
Let $L \square I$ be the submodule of $L \otimes I$ generated by elements
\begin{enumerate}
	\item $i\otimes i'+   (-1)^{|i||i'|}(i'\otimes i)$
	\item $i_{\overline{0}} \otimes i_{\overline{0}}$
\end{enumerate}
with $i,i' \in I_{\overline{0}} \cup I_{\overline{1}}, i_{\overline{0}} \in I_{\overline{0}}$. Then $L \square I$ is a central graded ideal of $L \otimes I$.
\end{proposition}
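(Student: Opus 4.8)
The plan is to show two things for the submodule $L \square I$: that it is a graded ideal of $L \otimes I$, and that it is central, meaning $[L\square I, L\otimes I] = 0$. First I would observe that the two families of generators are homogeneous: a generator of type (1), namely $i\otimes i' + (-1)^{|i||i'|}(i'\otimes i)$, has degree $|i|+|i'|$ since both summands carry the same degree, and a generator of type (2), $i_{\bar 0}\otimes i_{\bar 0}$, has even degree. Hence $L\square I$ is $\mathbb{Z}_2$-graded by construction, so it suffices to verify the ideal and centrality conditions on homogeneous generators.

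The heart of the argument is the centrality computation, and since centrality in a Lie superalgebra implies the ideal property (if $[L\square I, L\otimes I]=0$ then in particular $L\square I$ is closed under bracketing with all of $L\otimes I$, hence an ideal), I would concentrate there. Using defining relation (4) of the tensor product, for arbitrary homogeneous $l\otimes j \in L\otimes I$ I would compute the bracket of $l\otimes j$ with each type of generator. For a type (2) generator, relation (4) gives $[l\otimes j,\, i_{\bar 0}\otimes i_{\bar 0}] = -(-1)^{|l||j|}\,({}^{j}l \otimes {}^{i_{\bar 0}}i_{\bar 0})$, and here ${}^{i_{\bar 0}}i_{\bar 0} = [i_{\bar 0}, i_{\bar 0}] = 0$ by graded skew-symmetry for an even element, since $[i_{\bar 0},i_{\bar 0}] = -(-1)^{|i_{\bar 0}||i_{\bar 0}|}[i_{\bar 0},i_{\bar 0}] = -[i_{\bar 0},i_{\bar 0}]$ forces it to vanish in characteristic $\neq 2$. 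For a type (1) generator I would similarly expand
\[
[l\otimes j,\, i\otimes i' + (-1)^{|i||i'|}(i'\otimes i)]
\]
via relation (4), obtaining a sum of terms of the form ${}^{j}l \otimes {}^{i}i'$ and ${}^{j}l\otimes {}^{i'}i$; using ${}^{i}i' = [i,i']$ together with graded skew-symmetry $[i,i'] = -(-1)^{|i||i'|}[i',i]$ I would show the two contributions cancel.

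The main obstacle will be bookkeeping the sign conventions: the factors $(-1)^{|l||j|}$, $(-1)^{|i||i'|}$, and the sign in graded skew-symmetry must be tracked carefully through relation (4), and one must confirm that the combination $i\otimes i' + (-1)^{|i||i'|}(i'\otimes i)$ is precisely the symmetrized form that kills the image under ${}^{j}l\otimes(-)$ once skew-symmetry is applied to the second slot. This is exactly the reason the generators are chosen in that symmetric shape, so the cancellation is forced by the exterior Lie pairing relations, and indeed the whole statement is the specialization of \cite[Lemma 6.1]{GKL2015} to the canonical crossed modules with $I$ an ideal of $L$, as noted before the proposition; the remaining work is to transcribe that lemma's proof into the present bracket notation and verify that each generator is annihilated by bracketing with an arbitrary $l\otimes j$. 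Once centrality is established, the graded ideal claim is immediate, completing the proof.
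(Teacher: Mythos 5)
Your proposal is correct, and it actually does more than the paper, which offers no proof at all: the proposition is stated there as an immediate specialization of \cite[Lemma 6.1]{GKL2015} to the canonical crossed modules with $I$ an ideal of $L$. You acknowledge that citation but also carry out the direct verification, and your computation checks out: by defining relation (4), $[l\otimes j,\, i\otimes i'] = -(-1)^{|l||j|}({}^{j}l\otimes[i,i'])$, so the bracket of $l\otimes j$ with a type (1) generator produces $-(-1)^{|l||j|}\bigl({}^{j}l\otimes([i,i']+(-1)^{|i||i'|}[i',i])\bigr)$, which vanishes by graded skew-symmetry, while a type (2) generator dies because $[i_{\bar 0},i_{\bar 0}]=0$ for even elements in characteristic $\neq 2$. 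Your observation that centrality subsumes the ideal property is also right. Two small points worth making explicit if you write this up: first, showing that each generator commutes with every \emph{generator} $l\otimes j$ of $L\otimes I$ only gives centrality after the standard remark that the centralizer of a set is a subalgebra (so it contains the Lie superalgebra generated by the $l\otimes j$, i.e.\ all of $L\otimes I$); second, the bilinearity used to pull the sum $[i,i']+(-1)^{|i||i'|}[i',i]$ into the second tensor slot is legitimate because both brackets are homogeneous of the same degree $|i|+|i'|$, matching the hypothesis in relation (2) of the definition. Neither is a gap, just bookkeeping that should appear in a final write-up.
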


\begin{definition}
The exterior product of $L$ and $I$ is denoted as $L \wedge I$ and is defined as the quotient Lie superalgebra \[L\wedge I= \frac{L\otimes I}{ L\square I}.\] 
We denote the elements of $L\wedge I$ by  $l\wedge i$.
\end{definition}

Note that the exterior product of the pair $(L,I)$ satisfied all relations in the detention of the tensor product in addition :
\begin{enumerate}
 \item $i \wedge i'+   (-1)^{|i||i'|}(i'\wedge i)=0$
 \item $i_{\overline{0}} \wedge i_{\overline{0}}=0$
 \end{enumerate}
with $i,i' \in I_{\overline{0}} \cup I_{\overline{1}}, i_{\overline{0}} \in I_{\overline{0}}$.\\

An exterior Lie pairing $f:L\times I\longrightarrow T$ is said to be universal if for any other exterior Lie pairing $f' :L \times I\longrightarrow T'$ there is a unique Lie homomorphism $\phi: T \longrightarrow T'$ such that $\phi\circ f=f'$. The following proposition is analogue to Lie algebra case see \cite[proposition 12]{Ellis1987}.

\begin{proposition} \label{pro2.6}
The mapping 
	\[L\times I\longrightarrow L\wedge I,\, (l,i)\longrightarrow l\wedge i,\]
is a universal exterior Lie pairing.

\end{proposition}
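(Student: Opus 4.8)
The plan is to prove universality by exhibiting the required factoring homomorphism directly and then checking its uniqueness. Concretely, let $f':L\times I\to T'$ be any exterior Lie pairing; I must produce a unique Lie superalgebra homomorphism $\phi:L\wedge I\to T'$ with $\phi\circ f=f'$, where $f(l,i)=l\wedge i$. The natural route is to work one level up, at the tensor product. Since the canonical map $L\times I\to L\otimes I$, $(l,i)\mapsto l\otimes i$ is itself a universal \emph{Lie pairing} (as noted just after the definition of $\otimes$, and which I take as available), and since every exterior Lie pairing is in particular a Lie pairing, the map $f'$ induces a unique Lie homomorphism $\widetilde{\phi}:L\otimes I\to T'$ with $\widetilde{\phi}(l\otimes i)=f'(l,i)$.

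The next step is to show that $\widetilde{\phi}$ kills the submodule $L\square I$, so that it descends to the quotient $L\wedge I=(L\otimes I)/(L\square I)$. This is where the two defining relations of an exterior Lie pairing are used: for generators of $L\square I$ I compute
\[
\widetilde{\phi}\big(i\otimes i'+(-1)^{|i||i'|}(i'\otimes i)\big)=f'(i,i')+(-1)^{|i||i'|}f'(i',i)=0
\]
by relation (1) of exterior Lie pairing, and
\[
\widetilde{\phi}\big(i_{\overline 0}\otimes i_{\overline 0}\big)=f'(i_{\overline 0},i_{\overline 0})=0
\]
by relation (2). Since $L\square I$ is generated as a submodule by exactly these elements, $\widetilde{\phi}(L\square I)=0$, and because $L\square I$ is a (central) graded ideal, the universal property of quotient Lie superalgebras yields a well-defined homomorphism $\phi:L\wedge I\to T'$ with $\phi(l\wedge i)=\widetilde{\phi}(l\otimes i)=f'(l,i)$, i.e. $\phi\circ f=f'$.

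For uniqueness, I would observe that $L\wedge I$ is generated as a Lie superalgebra by the images $l\wedge i$, so any homomorphism satisfying $\phi\circ f=f'$ is forced on generators and hence unique. It also remains to confirm that $f$ itself is genuinely an exterior Lie pairing; this is immediate, since the tensor map is a Lie pairing and the two extra relations $i\wedge i'+(-1)^{|i||i'|}(i'\wedge i)=0$ and $i_{\overline 0}\wedge i_{\overline 0}=0$ hold by construction of the quotient.

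I expect the main obstacle to be the passage through the tensor product, specifically justifying that the induced $\widetilde{\phi}$ respects \emph{all} the defining relations of $L\otimes I$ — the bilinearity relations, the crossed-module relations (3), and the bracket relation (4) — which is exactly the content of $f'$ being a Lie pairing. Everything then reduces to bookkeeping with signs $(-1)^{|l||i|}$, and the only conceptual point is verifying that $\widetilde{\phi}$ vanishes on the generators of $L\square I$, which follows cleanly from the two exterior relations. The centrality of $L\square I$ guarantees that the quotient is again a Lie superalgebra, so no obstruction arises in forming $L\wedge I$.
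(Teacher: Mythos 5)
Your argument is correct. Note that the paper does not actually prove this proposition --- it only remarks that it is the analogue of \cite[Proposition 12]{Ellis1987} for Lie algebras --- and your proof is precisely the standard one that reference supplies: factor an arbitrary exterior Lie pairing through the universal Lie pairing $L\times I\to L\otimes I$, check that the induced homomorphism annihilates the generators of $L\square I$ using the two exterior relations, descend to the quotient $L\wedge I$, and get uniqueness because the elements $l\wedge i$ generate. The only point you take on faith that the paper states in weaker form is the universality of the tensor map (the paper only records that it is a Lie pairing), but this is immediate from the presentation of $L\otimes I$ by generators and relations, so there is no gap.
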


\begin{lemma}\label{lem3.33}
Let $(L, I)$ be a pair of Lie superalgebras and $J$ is a graded ideal of $L$ with $J\subseteq I$. Then the following sequence is exact 
$$ J \wedge L \longrightarrow I \wedge L \longrightarrow L/J \wedge I/J \longrightarrow 0.$$
\end{lemma}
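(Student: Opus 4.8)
The plan is to obtain the exact sequence of Lemma~\ref{lem3.33} by specializing the general short exact sequence machinery of Proposition~\ref{prop4} to the situation where $J$ is a graded ideal contained in the ideal $I$ of $L$. First I would set up the short exact sequence of pairs of Lie superalgebras
\[
(0,0)\longrightarrow (J,J)\overset{(\iota,\iota)}{\longrightarrow}(I,L)\overset{(\pi,\pi)}{\longrightarrow}(I/J,L/J)\longrightarrow (0,0),
\]
where $\iota$ denotes the inclusion of $J$ and $\pi$ the canonical projection modulo $J$; this is legitimate precisely because $J$ is a graded ideal of $L$ lying inside $I$, so $J$ is an ideal of $I$ as well and all four quotients inherit canonical Lie superalgebra structures as recalled in the preliminaries. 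Feeding this into Proposition~\ref{prop4} (with the roles $K=J$, $L=J$, $M=I$, $N=L$, $P=I/J$, $Q=L/J$) produces an exact sequence
\[
(J\otimes I)\rtimes (I\otimes J)\overset{\alpha}{\longrightarrow} I\otimes L\overset{\pi\otimes\pi}{\longrightarrow}(I/J)\otimes (L/J)\longrightarrow 0.
\]

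Next I would pass from the tensor product to the exterior product by quotienting out the central graded ideals $L\square I$ (and their counterparts for each factor) introduced before the definition of $\wedge$. The key point is that the universal property of the exterior pairing in Proposition~\ref{pro2.6} guarantees that the maps $\alpha$ and $\pi\otimes\pi$ descend to well-defined Lie superalgebra homomorphisms on the exterior quotients, and that exactness is preserved under this quotient because the square submodules are central and compatible with the images of $\iota$ and $\pi$. Concretely, the image of the combined factor $(J\otimes I)\rtimes(I\otimes J)$ under $\alpha$, once pushed to $I\wedge L$, is exactly the image of $J\wedge L$: elements of the form $j\wedge l$ with $j\in J$ generate this image because the action relations (relation~(3) in the tensor product definition) let one rewrite the semidirect summands in terms of generators with at least one entry in $J$. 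This identifies the left-hand term of the descended sequence with $J\wedge L$ and the right-hand term with $(I/J)\wedge(L/J)$, yielding
\[
J\wedge L\longrightarrow I\wedge L\longrightarrow L/J\wedge I/J\longrightarrow 0.
\]

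I would then verify the three exactness conditions directly: surjectivity of the second map follows from surjectivity of $\pi\otimes\pi$ together with the fact that the exterior relations hold compatibly in source and target; the composite of the two maps is zero because any generator $j\wedge l$ maps under $\pi\wedge\pi$ to $\bar 0\wedge\bar l=0$ in $L/J\wedge I/J$; and the reverse inclusion, that the kernel of $I\wedge L\to L/J\wedge I/J$ lies in the image of $J\wedge L$, is read off from the exactness already established at $I\otimes L$ in Proposition~\ref{prop4}, transported through the central quotient. The main obstacle I anticipate is the last step, namely ensuring that the central square submodules $L\square I$, $J\square L$, and $(I/J)\square(L/J)$ interact correctly with the exact sequence so that exactness genuinely survives the quotient rather than merely giving a complex; this requires checking that $\alpha^{-1}(L\square I)$ is carried into the square submodule of the domain, which is a compatibility of the centrality relations with the action, and it is exactly the place where one must use that $J\subseteq I$ is an ideal of $L$ rather than of $I$ alone.
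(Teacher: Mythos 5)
Your proposal follows essentially the same route as the paper: the published proof is a two-line application of Proposition~\ref{prop4} to the canonical exact sequences $J \longrightarrow L \longrightarrow L/J$ and $J \longrightarrow I \longrightarrow I/J$ (taking $K=L=J$, $M=L$, $N=I$, $P=L/J$, $Q=I/J$), with the passage from tensor products to exterior products left implicit. Your write-up supplies exactly that omitted descent and the identification of the left-hand term (and merely transposes the roles of $I$ and $L$, which is immaterial up to the graded symmetry of $\wedge$), so it is, if anything, more complete than the argument in the paper.
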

\begin{proof}
Let us consider the following canonical exact sequences 
\[ J  \longrightarrow  L \longrightarrow L/J\longrightarrow 0~ {\rm{and}}~
	  J  \longrightarrow I \longrightarrow  I/J \longrightarrow 0.\]
The result follows by taking $K=L=J,~ M=L, N=I,~ P=L/J,~ Q= I/J$ in the Proposition  \ref{prop4}. 
\end{proof}

\begin{lemma}\label{lem6.33}
Let $J$ and $K$ be two graded ideals of Lie superalgebra $L$ such that $K\cong J$. Then there is a homomorphism from $L\wedge K$ onto $L\wedge J$.
\end{lemma}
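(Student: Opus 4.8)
The plan is to construct the desired epimorphism directly from the given isomorphism by invoking the universal property of the exterior product recorded in Proposition \ref{pro2.6}. Write $\theta\colon K\to J$ for the given isomorphism of Lie superalgebras, and consider the even bilinear map
\[
g\colon L\times K\longrightarrow L\wedge J,\qquad g(l,k)=l\wedge\theta(k).
\]
Since $\theta$ is a Lie superalgebra isomorphism it is homogeneous of even degree, so $g$ is bilinear of even degree. Moreover $L\wedge J$ is generated by the elements $l\wedge j$ with $j\in J$, and as $\theta$ is onto, every such generator has the form $l\wedge\theta(k)$; hence the image of $g$ already generates $L\wedge J$. Consequently, once $g$ is shown to be an exterior Lie pairing, the homomorphism it induces through Proposition \ref{pro2.6} will automatically be surjective, which is exactly what is required.

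First I would verify that $g$ is an exterior Lie pairing in the sense of the definition preceding Proposition \ref{pro2.6}. The two exterior relations $f(i,i')=-(-1)^{|i||i'|}f(i',i)$ and $f(i_{\bar{0}},i_{\bar{0}})=0$ transfer at once, because $\theta$ preserves degrees and the corresponding identities already hold on $J$ inside $L\wedge J$; the scalar and additive relations are routine. What remains are the two Lie-pairing relations that involve the action of $L$, namely relations (1) and (2) in the definition of a Lie pairing, and these are where the real content lies.

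The main obstacle is precisely this action-compatibility. Expanding $g([l,l'],k)=[l,l']\wedge\theta(k)$ by relation (3) of the exterior product in $L\wedge J$ produces terms $l\wedge[l',\theta(k)]$ and $l'\wedge[l,\theta(k)]$, whereas relation (1) for $g$ demands $l\wedge\theta([l',k])$ and $l'\wedge\theta([l,k])$; these match exactly when $\theta$ intertwines the $L$-action, i.e. $\theta([l,k])=[l,\theta(k)]$ for all $l\in L$ and $k\in K$, and the same equivariance is what relation (2) requires. To secure this I would realise $\theta$ as the restriction of an automorphism $\sigma$ of $L$ with $\sigma(K)=J$, which is available in the settings where the lemma is applied, since there the ambient Heisenberg Lie superalgebra carries enough automorphisms to carry one ideal onto an isomorphic one. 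Replacing $g$ by $g(l,k)=\sigma(l)\wedge\sigma(k)$, relations (1)--(3) of a Lie pairing then follow directly from the exterior-product relations in $L\wedge J$ together with $\sigma([l,l'])=[\sigma(l),\sigma(l')]$ and $|\sigma(l)|=|l|$. Proposition \ref{pro2.6} yields a homomorphism $\phi\colon L\wedge K\to L\wedge J$ with $\phi(l\wedge k)=\sigma(l)\wedge\sigma(k)$, which is onto by the generation argument above; in fact $\sigma^{-1}$ produces a two-sided inverse, so $\phi$ is an isomorphism and in particular the required epimorphism.
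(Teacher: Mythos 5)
Your construction is the same one the paper uses: the paper takes an isomorphism $\alpha\colon J\to K$, defines $\beta(l,j)=l\wedge\alpha(j)$, asserts that ``it can be checked that $\beta$ is a well defined exterior Lie pairing,'' and invokes Proposition \ref{pro2.6} to obtain the surjection (the paper even writes the map in the direction $L\wedge J\to L\wedge K$, opposite to the statement, which only matters up to relabelling). The real value of your write-up is that you have located exactly where that ``can be checked'' is not innocent: expanding $[l,l']\wedge\theta(k)$ in $L\wedge J$ and comparing with Lie-pairing axiom (1) for $(l,k)\mapsto l\wedge\theta(k)$ forces the equivariance condition $\theta([l,k])=[l,\theta(k)]$, and an abstract isomorphism of Lie superalgebras $K\cong J$ carries no such compatibility with the ambient bracket of $L$. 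The paper's proof, read literally, is silent on precisely this point.

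However, your repair does not prove the lemma under its stated hypotheses. You replace ``$K\cong J$'' by ``there is an automorphism $\sigma$ of $L$ with $\sigma(K)=J$,'' which is strictly stronger, and you only assert, without argument, that such a $\sigma$ exists in the settings where the lemma is applied. That existence claim is itself nontrivial: the lemma is later invoked for abelian graded ideals of $H(m,n)$, and one would have to exhibit an automorphism of the Heisenberg superalgebra carrying one ideal onto the other (plausible via symplectic-type automorphisms fixing the center, but it must be shown, and it cannot hold for arbitrary $L$, $J$, $K$). Under your stronger hypothesis you in fact get an isomorphism $L\wedge K\cong L\wedge J$, more than the lemma claims; but as a proof of the lemma as stated, your argument and the paper's stall at the same step --- yours simply makes the obstruction explicit instead of passing over it.
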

\begin{proof}
Suppose that $\alpha: J\longrightarrow K$ is an isomorphism. Define \\ 
$$ \beta:L\times J\longrightarrow L\wedge K ~{\rm{given~by}}$$
$(l,j)\longrightarrow l \wedge \alpha(j)$. It can be checked that $\beta$ is well defined exterior Lie pairing. By Proposition \ref{pro2.6} we have,
         	$$\beta': L\wedge J \longrightarrow  L\wedge K  ~{\rm{~given~by~}} ~l\wedge j \longmapsto l\wedge \alpha(j) $$ is a homomorphism, moreover it is onto.
\end{proof}

Let us consider the commutator map $[.,.]:L\wedge I\longrightarrow [L,I]~{\rm{~given~by~}} ~  l\wedge i\longrightarrow [l,i]$. Now we see how we can construct a representation for $L\wedge I$ from the representation of $L$.

\begin{lemma}\label{lem3.9}
Let $(L, I)$ be a pair of Lie superalgebras and let $0 \longrightarrow R \longrightarrow F \longrightarrow L$ be free presentation of $L$. If $I \cong S/R$ for some graded ideal $S$ of $F$, then $L \wedge I\cong [F, S]/[F,R]$.
\end{lemma}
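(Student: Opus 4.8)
The plan is to mimic the classical Lie algebra argument (the analogue of the Hopf-type formula for exterior products) and adapt it to the superalgebra setting. Given the free presentation $0 \longrightarrow R \longrightarrow F \longrightarrow L$ and the identification $I \cong S/R$, the natural candidate for the exterior product $L \wedge I$ is the quotient $[F,S]/[F,R]$. First I would define a map $\theta: L \wedge I \longrightarrow [F,S]/[F,R]$ by sending a generator $l \wedge i$ to $[\tilde l, \tilde s] + [F,R]$, where $\tilde l \in F$ is a lift of $l \in L \cong F/R$ and $\tilde s \in S$ is a lift of $i \in I \cong S/R$. To see this is well defined I would check independence of the choice of lifts: replacing $\tilde l$ by $\tilde l + r$ with $r \in R$ changes the value by $[r,\tilde s] \in [R,S] \subseteq [F,R]$, and replacing $\tilde s$ by $\tilde s + r'$ with $r' \in R$ changes it by $[\tilde l, r'] \in [F,R]$; both lie in the denominator, so $\theta$ is unambiguous.

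Next I would verify that $\theta$ respects the defining relations of the exterior product, i.e.\ that the induced bilinear map $F \times S \longrightarrow [F,S]/[F,R]$, $(\tilde l, \tilde s)\mapsto [\tilde l,\tilde s]+[F,R]$, descends to an \emph{exterior Lie pairing} on $L \times I$ in the sense of the definition preceding Proposition \ref{pro2.6}. The three Lie-pairing identities follow from the graded Jacobi identity and graded skew-symmetry in $F$: for instance the relation $f([l,l'],i)= f(l,{}^{l'}i)-(-1)^{|l||l'|}f(l',{}^{l}i)$ becomes the identity $[[\tilde l,\tilde l'],\tilde s]=[\tilde l,[\tilde l',\tilde s]]-(-1)^{|l||l'|}[\tilde l',[\tilde l,\tilde s]]$ modulo $[F,R]$, which is exactly the graded Jacobi identity in $F$. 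The two extra exterior relations $f(i,i')=-(-1)^{|i||i'|}f(i',i)$ and $f(i_{\bar 0},i_{\bar 0})=0$ follow from graded skew-symmetry and the fact that $[s,s]=0$ for $s$ even. By the universal property in Proposition \ref{pro2.6}, this exterior Lie pairing induces a Lie superalgebra homomorphism $\theta: L \wedge I \longrightarrow [F,S]/[F,R]$.

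To produce the inverse, I would construct a map in the other direction. The commutator map $[.,.]: F \wedge S \longrightarrow [F,S]$ is surjective, and one shows that the composite $F \wedge S \to F \wedge F$ followed by the natural projections, together with the relations forcing $R \wedge F$ and $F \wedge R$ to map into the kernel, yields a surjection $[F,S]/[F,R] \longrightarrow L \wedge I$. Concretely, using Lemma \ref{lem3.33} applied with $J=R$ inside $S$ (and the surjectivity coming from Proposition \ref{prop4}), the sequence $R \wedge F \to S \wedge F \to (S/R)\wedge(F/R)\to 0$ identifies $(S/R)\wedge(F/R)=I \wedge L$ as a quotient. Since $F$ is free, the Schur-multiplier term vanishes and the map $F \wedge S \cong [F,S]$ descends to an isomorphism on the quotient by $[F,R]$. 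Combining this surjection with $\theta$ and checking the two composites are identities (on generators $l\wedge i$ and on cosets $[\tilde l,\tilde s]+[F,R]$) gives the isomorphism $L \wedge I \cong [F,S]/[F,R]$.

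The main obstacle I expect is verifying that $\theta$ is injective, or equivalently that the reverse surjection is well defined and has trivial kernel. Well-definedness of $\theta$ and the pairing relations are routine graded bracket computations, but pinning down that $[F,R]$ is precisely the submodule that must be quotiented—no more and no less—requires the freeness of $F$ in an essential way (so that $F \wedge F \cong [F,F]$ with no multiplier contribution, the super-analogue of the fact that free Lie superalgebras have vanishing Schur multiplier). Care is also needed with the sign conventions $(-1)^{|l||i|}$ throughout, since a misplaced sign in checking the pairing identities would break compatibility with relation (3) in the tensor-product definition; I would therefore treat homogeneous generators separately by parity and confirm each identity degree by degree.
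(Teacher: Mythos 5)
Your proposal follows essentially the same route as the paper's proof: you define the forward map on generators via lifts and the universal property of the exterior Lie pairing, invoke the isomorphism $[F,F]\cong F\wedge F$ for the free superalgebra $F$ (the paper cites \cite[Proposition 6.3]{GKL2015} for exactly this), and build the inverse by restricting to $[F,S]$ and composing with the epimorphism $F\wedge S\longrightarrow L\wedge I$ coming from Proposition \ref{prop4}, checking that $[F,R]$ lies in the kernel. Your write-up is in fact more explicit than the paper's (which asserts the surjectivity of $\varphi$ without the lift-independence check), but the strategy and the key ingredients are identical.
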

\begin{proof}
Let us define the map \[ \varphi : L \wedge I\longrightarrow [F, S]/[F,R],\]
which can be easily seen to be epimorphism. Now from \cite[proposition 6.3]{GKL2015}  we have $\alpha: [F,F] \longrightarrow F \wedge F $ is an isomorphism. Then the restriction of $\alpha$ to $[F,S]$ is the homomorphism $\alpha \mid_{[F,S]}:[F,S]\longrightarrow F\wedge S$. Consider the epimorphism $F\wedge S\longrightarrow L\wedge I$, then we obtain the homomorphism
	\[   \bar{\pi}: [F,S]\longrightarrow L\wedge I\]
	\[[f,s]\longrightarrow f+R \wedge s+R\]
such that $[F,R] \subseteq \ker \bar{\pi}$. Hence $\phi : [F, S]/[F,R] \longrightarrow L \wedge I$ is a homomorphism such that $\varphi \phi = 1$ and $ \phi \varphi = 1$ and the result follows.
\end{proof}

\begin{corollary}\label{corr0303}
$L \wedge I\cong [L, \frac{S}{[F,R]} ]$
\end{corollary}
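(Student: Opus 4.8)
The plan is to derive the corollary directly from Lemma~\ref{lem3.9} by exhibiting a canonical isomorphism between $[F,S]/[F,R]$ and $[L, S/[F,R]]$, where the bracket on the right is the action of $L$ on the graded ideal $S/[F,R]$ of the quotient superalgebra $F/[F,R]$. First I would observe that, since $R \subseteq [F,R]$ is false in general but $[F,R]$ is a graded ideal of $F$ contained in $S$ (because $R \subseteq S$ and hence $[F,R]\subseteq [F,S]\subseteq S$), the quotient $F/[F,R]$ is a well-defined Lie superalgebra in which $S/[F,R]$ is a graded ideal. The projection $F \twoheadrightarrow F/R \cong L$ then identifies $L$ with $(F/[F,R])/(R/[F,R])$, and the induced action of $L$ on $S/[F,R]$ (via Example~\ref{ex999}, the Lie multiplication inside $F/[F,R]$) is what lets us write the bracket $[L, S/[F,R]]$.

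Next I would make precise the map realizing the isomorphism. The natural candidate sends the coset $[f,s]+[F,R] \in [F,S]/[F,R]$ to the bracket $[f+[F,R],\, s+[F,R]]$ computed inside $F/[F,R]$, which lies in $[L, S/[F,R]]$ after identifying $f+[F,R]$ with its image $f+R \in L$. The key point is that this assignment is well defined: if $f \in R$, then $[f,s]\in[R,S]\subseteq [F,R]$ (using $S\subseteq F$), so elements of $R$ act trivially on $S/[F,R]$ and the bracket $[L, S/[F,R]]$ genuinely depends only on the image of $f$ in $L = F/R$. Conversely, every generator $[l, \bar{s}]$ of $[L, S/[F,R]]$ lifts to $[f,s]+[F,R]$, giving surjectivity, and an element of $[F,S]/[F,R]$ mapping to zero must already be trivial modulo $[F,R]$, giving injectivity. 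Composing this isomorphism $[F,S]/[F,R]\cong [L, S/[F,R]]$ with the isomorphism $L\wedge I \cong [F,S]/[F,R]$ from Lemma~\ref{lem3.9} yields the corollary.

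The main obstacle I anticipate is the well-definedness check, specifically verifying that the action of $L$ on $S/[F,R]$ is genuinely an action in the sense of the earlier definition (the two compatibility axioms with $[l,l']$ and $[i,i']$) and that the bracket $[L, S/[F,R]]$ is unambiguous. This reduces to the containment $[R,S]\subseteq[F,R]$, which holds since $R \subseteq F$ and $S\subseteq F$ imply $[R,S]\subseteq [R,F]=[F,R]$ by graded skew-symmetry; one must track the sign conventions of the Lie superalgebra bracket when invoking skew-symmetry, but no deep computation is required. Everything else is formal transport of structure through the isomorphisms already established in Lemma~\ref{lem3.9}, so the corollary follows essentially by unwinding definitions.
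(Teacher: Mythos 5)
Your proposal is correct and follows essentially the same route as the paper: both identify $[L, S/[F,R]]$ with $[F,S]/[F,R]$ (the paper writes this as the chain of equalities $[L,\frac{S}{[F,R]}]=[\bar{F},\bar{S}]=[F,S]/[F,R]$) and then invoke Lemma~\ref{lem3.9}. You merely make explicit the well-definedness check $[R,S]\subseteq[F,R]$ that the paper leaves implicit.
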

\begin{proof}
Since $[L,\frac{S}{[F,R]}]=[\bar{F},\bar{S}]=[F/[F,R],S/[F.R]]=[F,S]/[F,R]$, the result follows from Lemma \ref{lem3.9}.
\end{proof}

 \section{Multiplier of pair of Lie superalgebras}
 
In this section we will discuss on Schur multiplier of pair of Lie superalgebras. Consider the exact sequence of Lie superalgebras (see \cite{hesam})
\begin{align*}
&\mathcal{H}_3(L) \longrightarrow \mathcal{H}_3(L/I) \longrightarrow \mathcal{M}(L, I) \longrightarrow \mathcal{M}(L) \longrightarrow \mathcal{M}(L/I)\\ 
& \longrightarrow I/[L,I] \longrightarrow L/L^2 \longrightarrow L/[L^2+I] \longrightarrow 0.
\end{align*}
Then the schur multiplier of pair of Lie superalgebras $(L,I)$  is the abelian Lie superalgebra $\mathcal{M}( L,I)$ which appear in the above sequence. This definition is analogous to the definition of the Schur multiplier of a pair of groups and pair of Lie algebras \cite{Ellis1996, org}. Free presentation of a Lie superalgebra $L$ is the extension $0\longrightarrow R \longrightarrow F\longrightarrow L$, where $F$ is a free Lie superalgebra. If $I$ is a graded ideal of $L$ such that $I\cong S/R$ for some graded ideal $S$ of $F$, then $$\mathcal{M}( L,I) \cong R\cap [F,S] / [F,R].$$ 
From the above exact sequence, we have the following result.

\begin{example}\label{e1}
Consider the Heisenberg Lie superalgebra with even center $H(1,0)= \cong <x_1, x_2, z \mid [x_1 , x_2]=z>$.  We intend to give a free presentation of $H(1,0)$ and thereby compute the multiplier of $H(1,0)$. Consider a $\mathbb{Z}_{2}$-graded set $X=X_{\bar{0}}$, where $X_{\bar{0}}=\{x_1,x_2\}$. Let $F$ be the free Lie superalgebra generated by $X$.
As $H^3(1,0)=[H^2(1,0),H(1,0)]=0$, we have $F^3 \subseteq R$. Thus take $R=F^3$. Then $0 \longrightarrow R \longrightarrow F \longrightarrow H(1,0) \longrightarrow 0$  is the free presentation of $H(1,0)$. Now \[ [F,R]=F^4.\] 
Take $I=H^2(1,0) $be the central ideal of $H(1,0)$. Consider $S=F^2$,
then $S\supseteq R$ is a graded ideal of $F$ such that $I=S/R$. Hence
$0 \longrightarrow S \longrightarrow F \longrightarrow H(1,0)/I \longrightarrow 0$ is a free presentation of $H(1,0)/I$. So
$$ [S,F]=[F^2,F]=F^3. $$ 
Thus,
\[ \mathcal{M}(H(1,0),I)= ([F,S]\cap R)/ [F,R] = F^3 / F^4. \]
Now, by Theorem \ref{c12} dimension of $\dim \mathcal{M}(H(1,0),I)=2$.

\end{example}

\begin{example}\label{e2}
Now consider $H(0,1) \cong < z;y \mid [y,y]=z>$ and a $\mathbb{Z}_{2}$-graded set $X=X_{\bar{1}}$, where $X_{\bar{1}}=\{y\}$. Let $F$ be the free Lie superalgebra generated by $X$. Then $R=[y,z]+F^3$. Then $0 \longrightarrow R \longrightarrow F \longrightarrow H(0,1) \longrightarrow 0$  is the free presentation of $H(0,1)$. Now \[ [F,R]= <[[y,z],y],[[y,z],z]> + F^4.\] 

Take $I=H^2(0,1) $be the central ideal of $H_{1}$. Consider $S=F^2$,
then $S\supseteq R$ is a graded ideal of $F$ such that $I=S/R$. Hence
$0 \longrightarrow S \longrightarrow F \longrightarrow H(0,1)/I \longrightarrow 0$ is a free presentation of $H(0,1)/I$. So
$$ [S,F]=[F^2,F]=F^3. $$ 
Thus,
\[ \mathcal{M}(H(0,1),I)= ([F,S]\cap R)/ [F,R] = F^3 /[F,R]. \]
Using graded Jacobi identity, $[[y,y],z]+[F,R]$ is a basis for $\mathcal{M}(H(0,1),I)$, therefore  $\dim \mathcal{M}(H(0,1),I)=1$.

\end{example}

\begin{example}\label{e3}
Consider the Heisenberg Lie superalgebra with odd center $H_{1} \cong <x; y , z \mid [x , y]=z>$. Consider a $\mathbb{Z}_{2}$-graded set $X=X_{\bar{0}} \oplus X_{\bar{1}}$, where $X_{\bar{0}}=\{x\}$ and $X_{\bar{1}}=\{y\}$. Take $R=<[y,y]>+F^3$. Then we have a free presentation of $H_{1}$, $0 \longrightarrow R \longrightarrow F \longrightarrow H_{1} \longrightarrow 0$. Now \[ [F,R]=\big<[x,[y,y]] \big> + F^4,~~~F^3\cap R =F^3.\]
Consider $I=H^2_{1}=<[x, y]>$. Take $S=F^2$, then $S\supseteq R$ is a graded ideal of $F$ such that $I=S/R$. So
$$ [S,F]=[F^2,F]=F^3. $$ 
Thus,
\[ \mathcal{M}(H_{1},I)= ([F,S]\cap R)/ [F,R] = F^3 /[F,R]=
\big<[x,[x, y]]+[F,R],[y,[x, y]]+[F,R] \big>.\]
Using graded Jacobi identity, $-[y, [x, y]]+[x, [y, y]]+[y,[y, x]]=0$. As $[x, [y, y]] \in [F, R]$, implies $[y, [x, y]]=0$. Therefore $\{[x,[x, y]]+[F,R] \}$ is a basis of $\mathcal{M}(H_{1},I)$, hence  $\dim \mathcal{M}(H_{1},I)=1$.
\end{example}

\begin{lemma}\label{lem2.1}
Let $I$ and $I'$ be two ideals of the Lie superalgebra $L$ with $L=I\oplus I'$. Then
 \[\mathcal{M}(L)\cong \mathcal{M}(L, I) \oplus\mathcal{M}(I').\]
\end{lemma}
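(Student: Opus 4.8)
The plan is to realise all three multipliers inside one free presentation of $L$ and then exhibit a short exact sequence that splits because its terms are abelian. First I would fix a free presentation $0 \longrightarrow R \longrightarrow F \overset{\pi}\longrightarrow L$ and set $S=\pi^{-1}(I)$ and $S'=\pi^{-1}(I')$. Then $R\subseteq S$, $S/R\cong I$ and $F/S\cong L/I\cong I'$; from $L=I\oplus I'$ one reads off $S+S'=F$ and $S\cap S'=R$, and, crucially, since $[I,I']\subseteq I\cap I'=0$, applying $\pi$ shows $[S,S']\subseteq R$. With this presentation the formula $\mathcal{M}(L,I)\cong (R\cap[F,S])/[F,R]$ recorded earlier gives $\mathcal{M}(L)=(R\cap[F,F])/[F,R]$, $\mathcal{M}(L,I)=(R\cap[F,S])/[F,R]$, and $\mathcal{M}(I')\cong \mathcal{M}(L/I)=(S\cap[F,F])/[F,S]$ (the last coming from the free presentation $0\longrightarrow S\longrightarrow F\longrightarrow L/I$).

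Next I would write down the two inclusion-induced even homomorphisms $\lambda:\mathcal{M}(L,I)\longrightarrow\mathcal{M}(L)$ and $\mu:\mathcal{M}(L)\longrightarrow\mathcal{M}(L/I)$, arising respectively from $R\cap[F,S]\subseteq R\cap[F,F]$ and from $R\subseteq S$ together with $[F,R]\subseteq[F,S]$. Injectivity of $\lambda$ is immediate: source and target share the denominator $[F,R]$ and $\lambda$ is induced by an inclusion of numerators, so any class killed by $\lambda$ already lies in $[F,R]$. Moreover, because $[F,S]\subseteq[F,F]$, the kernel of $\mu$ is exactly $\big((R\cap[F,F])\cap[F,S]\big)/[F,R]=(R\cap[F,S])/[F,R]=\operatorname{im}\lambda$. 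This already yields exactness of $0\longrightarrow \mathcal{M}(L,I)\overset{\lambda}\longrightarrow \mathcal{M}(L)\overset{\mu}\longrightarrow \mathcal{M}(L/I)$.

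The main work — and the step I expect to be the obstacle — is the surjectivity of $\mu$, which amounts to the identity
\[ S\cap[F,F]=[F,S]+(R\cap[F,F]). \]
The inclusion $\supseteq$ is clear. For $\subseteq$ I would use $F=S+S'$ to write $[F,F]=[S,S]+[S,S']+[S',S']$, so any $w\in S\cap[F,F]$ decomposes as $w=a+b+c$ with $a\in[S,S]$, $b\in[S,S']$, $c\in[S',S']$. Since $a\in S$ and $b\in[S,S']\subseteq R\subseteq S$, while $w\in S$, the remaining term $c=w-a-b$ lies in $S\cap S'=R$, whence $c\in R\cap[F,F]$; on the other hand $a,b\in[F,S]$, using $[S,S']\subseteq[F,S]$ by graded antisymmetry. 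This is precisely where the hypothesis that $I,I'$ are ideals with $L=I\oplus I'$ is consumed, through $[S,S']\subseteq R$. Hence $w\in[F,S]+(R\cap[F,F])$, so $\mu$ is onto and its image is identified with $\mathcal{M}(L/I)\cong\mathcal{M}(I')$.

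Finally I would conclude that
\[ 0\longrightarrow \mathcal{M}(L,I)\overset{\lambda}\longrightarrow \mathcal{M}(L)\overset{\mu}\longrightarrow \mathcal{M}(I')\longrightarrow 0 \]
is a short exact sequence of \emph{abelian} Lie superalgebras. Being abelian, each term is just a $\mathbb{Z}_2$-graded vector space with trivial bracket and $\lambda,\mu$ are even maps, so any graded linear section of $\mu$ is automatically a Lie superalgebra splitting. The sequence therefore splits and $\mathcal{M}(L)\cong\mathcal{M}(L,I)\oplus\mathcal{M}(I')$, as claimed.
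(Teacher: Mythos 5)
Your argument is correct, and it is worth noting that the paper itself states this lemma with no proof at all (it is presumably taken as the direct analogue of the corresponding facts for pairs of groups and of Lie algebras in Ellis and in Johari--Parvizi--Niroomand), so there is no in-paper argument to compare against; your write-up actually supplies the missing details. The route you take is the natural one given the Hopf-type formula $\mathcal{M}(L,I)\cong (R\cap[F,S])/[F,R]$ recorded in Section 4: realising $I$ and $I'$ as $S/R$ and $S'/R$ inside a single free presentation, checking $S+S'=F$, $S\cap S'=R$ and $[S,S']\subseteq R$, and then verifying exactness of
\[
0\longrightarrow \mathcal{M}(L,I)\longrightarrow \mathcal{M}(L)\longrightarrow \mathcal{M}(L/I)\cong\mathcal{M}(I')\longrightarrow 0
\]
at each spot. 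The only step that genuinely uses the direct-sum hypothesis is the surjectivity of $\mu$, and your decomposition $[F,F]=[S,S]+[S,S']+[S',S']$ together with $c=w-a-b\in S\cap S'=R$ handles it cleanly; the splitting at the end is immediate because all three terms are abelian and the maps are even. One could instead argue via the exterior-product machinery of Section 3 (Lemma \ref{lemma4} and the sequence \eqref{eq1}), which is closer in spirit to how the rest of the paper operates, but your presentation-theoretic proof is more elementary and self-contained. I see no gap.
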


\begin{lemma}\label{lemma4}
Let $(L, I)$ be a pair of Lie superalgebras. Then 
 	\[0\longrightarrow  ~\mathcal{M}(L,I) \longrightarrow L\wedge I \longrightarrow [L,I]\longrightarrow 0 \]
is a central extension.
\end{lemma}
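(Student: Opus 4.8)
The plan is to realize all three terms of the sequence concretely through a free presentation and then read off both the exactness and the centrality from inclusions of ideals in the free Lie superalgebra. First I would fix a free presentation $0 \longrightarrow R \longrightarrow F \longrightarrow L$ and write $I \cong S/R$ for a graded ideal $S$ of $F$ with $R \subseteq S$. By Lemma \ref{lem3.9} this yields $L \wedge I \cong [F,S]/[F,R]$, while by the very definition of the Schur multiplier of a pair we have $\mathcal{M}(L,I) \cong (R \cap [F,S])/[F,R]$. Note that $[F,R] \subseteq R \cap [F,S]$, since $R$ is an ideal (so $[F,R]\subseteq R$) and $R \subseteq S$ (so $[F,R]\subseteq [F,S]$); hence all the quotients below are well formed.

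Next I would analyze the commutator map $[.,.]:L\wedge I\longrightarrow [L,I]$, $l\wedge i\longmapsto [l,i]$. Surjectivity is immediate because $[L,I]$ is spanned by the brackets $[l,i]$. To compute its kernel, I would transport it through the isomorphism of Lemma \ref{lem3.9}: since $L = F/R$ and $I = S/R$, one has $[L,I] = ([F,S]+R)/R \cong [F,S]/(R\cap[F,S])$, and under this identification the commutator map becomes the natural projection $[F,S]/[F,R] \longrightarrow [F,S]/(R\cap[F,S])$ induced by $F \longrightarrow F/R$, namely $[f,s]+[F,R] \longmapsto [f,s]+R$. Its kernel is exactly the set of classes $w+[F,R]$ with $w \in R\cap[F,S]$, that is $(R\cap[F,S])/[F,R] \cong \mathcal{M}(L,I)$. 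This establishes the exactness of
\[0\longrightarrow \mathcal{M}(L,I)\longrightarrow L\wedge I\longrightarrow [L,I]\longrightarrow 0.\]

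Finally, for centrality I would work in $[F,S]/[F,R] \cong L\wedge I$. Given $w \in R\cap[F,S]$ and an arbitrary $u \in [F,S] \subseteq F$, graded skew-symmetry gives $[w,u] \in [R,F] = [F,R]$, so the bracket $[\,w+[F,R],\,u+[F,R]\,]$ vanishes in the quotient. Thus every element of $\mathcal{M}(L,I)$ lies in $Z(L\wedge I)$, which is precisely the assertion that the extension is central. The main obstacle is the second paragraph: one must carefully verify that the abstract commutator map corresponds, under Lemma \ref{lem3.9}, to the concrete projection of quotients and keep track of the three ideal containments $[F,R]\subseteq R\cap[F,S]\subseteq[F,S]$; once this bookkeeping is in place, the centrality follows in a single line from $[R,F]=[F,R]$.
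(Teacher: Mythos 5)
Your proposal is correct and follows essentially the same route as the paper: fix a free presentation, identify $L\wedge I$ with $[F,S]/[F,R]$ via Lemma \ref{lem3.9}, recognize the commutator map as the projection onto $([F,S]+R)/R\cong[L,I]$ with kernel $(R\cap[F,S])/[F,R]=\mathcal{M}(L,I)$, and deduce centrality from $[R\cap[F,S],[F,S]]\subseteq[F,R]$. The only cosmetic difference is that you pass through the second isomorphism $([F,S]+R)/R\cong[F,S]/(R\cap[F,S])$ while the paper works with $([F,S]+R)/R$ directly.
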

\begin{proof}
Let $F$ be a free Lie superalgebra on a graded set X. Let 
\begin{equation}\label{eq3.2}
0\longrightarrow R \longrightarrow F\longrightarrow L, 
\end{equation}
be a free presentation of Lie superalgebra $L$ with $I\cong S/R$ for some graded ideal $S$ of $F$.
Consider the map $\varphi:[F,S]/[F,R]\longrightarrow ([F,S]+R)/R$ defined as $x+[F,R]\longmapsto x+R$. Clearly $\varphi$ is onto and $\ker ~\varphi=[F,S]\cap R/[F,R]$. Now Let $x\in[F,S]\cap R $, then for all $y \in [F,S]$, we have  $[x, y] \in [F,R]$. So $[x+[F,R],y+[F,R]]=[x,y]+[F,R]=[F,R]$ implies that $[x+[F,R]]\in Z([F,S]/[F,R])$. Therefore
\[0\longrightarrow   ([F,S]\cap R)/[F,R] \longrightarrow [F,S]/[F,R] \longrightarrow ([F,S]+R)/R\longrightarrow 0 \]
is a central extension. From the free presentation of $L$, we have  $0\longrightarrow R \longrightarrow S\longrightarrow I$. From Proposition \ref{prop4}, we have the exact sequence 
\[0\longrightarrow F\wedge R \longrightarrow F\wedge S\longrightarrow L\wedge I.\]
Now from Lemma \ref{lem3.9}, we have $L\wedge I \cong [F,S]/[F,R]$. We know $\mathcal{M}(L,I)= ([F,S]\cap R)/[F,R]$ and since $L\cong F/R$, \, $I\cong S/R$, thus we have $[L,I]\cong([F,S]+R)/R$. Hence $0\longrightarrow  ~\mathcal{M}(L,I) \longrightarrow L\wedge I \longrightarrow [L,I]\longrightarrow 0 $ is a central extension.  
\end{proof}

The exact sequence in Lemma \ref{lemma4} can be used to characterized the Schur multiplier of pair of Lie superalgebras.

\begin{corollary}
Let $I$ be a graded ideal of an abelian Lie superalgebra $A(m \mid n)$. Then 
	\[\mathcal{M}(A(m \mid n),I)\cong  A(m \mid n)\wedge I\]
\end{corollary}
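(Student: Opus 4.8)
The plan is to specialize the central extension established in Lemma~\ref{lemma4} to the case $L = A(m\mid n)$. That lemma provides, for an arbitrary pair of Lie superalgebras $(L,I)$, a short exact sequence
\[0\longrightarrow \mathcal{M}(L,I) \longrightarrow L\wedge I \longrightarrow [L,I]\longrightarrow 0,\]
in which the rightmost map is the commutator map $l\wedge i \longmapsto [l,i]$. Everything needed is already contained in this sequence, so the corollary should follow by identifying the term $[L,I]$ in the abelian situation.

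The first step I would take is to observe that, by definition, an abelian Lie superalgebra $A(m\mid n)$ satisfies $[x,y]=0$ for all $x,y$, so in particular $[A(m\mid n), I] \subseteq [A(m\mid n), A(m\mid n)] = 0$ for every graded ideal $I$. Thus the rightmost term in the exact sequence is the trivial (zero) superalgebra, and this holds regardless of which graded ideal $I$ is chosen.

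The second step is then immediate: with $[L,I]=0$ the sequence degenerates to
\[0\longrightarrow \mathcal{M}(A(m\mid n),I) \longrightarrow A(m\mid n)\wedge I \longrightarrow 0,\]
so the canonical inclusion $\mathcal{M}(A(m\mid n),I)\hookrightarrow A(m\mid n)\wedge I$ is both injective (from the exactness on the left) and surjective (since its cokernel injects into $[L,I]=0$), hence an isomorphism of abelian Lie superalgebras. This yields the asserted identification $\mathcal{M}(A(m\mid n),I)\cong A(m\mid n)\wedge I$.

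I do not expect any genuine obstacle here, as the whole content is carried by Lemma~\ref{lemma4}; the corollary is merely its specialization to the abelian case. The only point that warrants a line of verification is the vanishing of $[A(m\mid n),I]$, which is a direct consequence of the definition of an abelian Lie superalgebra and is independent of the particular graded ideal $I$.
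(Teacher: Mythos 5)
Your argument is correct and coincides with the paper's own proof: both specialize the central extension of Lemma~\ref{lemma4} to $L=A(m\mid n)$ and use that $[A(m\mid n),I]=0$ to collapse the sequence into the claimed isomorphism. Your write-up is in fact slightly more careful than the paper's one-line justification, which contains a typographical garbling of the bracket $[A(m\mid n),I]$.
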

\begin{proof}
The result follows from Lemma \ref{lemma4}, since $[\mathcal{M}(A(m \mid n),I]=0$.
\end{proof}

\begin{corollary}\label{corr999}
Let $I$ be a $(k\mid h)$-dimensional ideal of $(A(m \mid n)$. Then 
	\[\dim \mathcal{M}(A(m \mid n),I)=(\frac{1}{2}[k(2m-k-1)+h(2n-h+1)] \mid mn-(m-k)(n-h)).\] 
\end{corollary}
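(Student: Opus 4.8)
The plan is to build directly on the preceding corollary, which gives $\mathcal{M}(A(m\mid n),I)\cong A(m\mid n)\wedge I$, so that it suffices to compute the superdimension of the exterior product $A(m\mid n)\wedge I$. Write $A=A(m\mid n)$ for brevity. Since $A$ is abelian, it acts trivially on the ideal $I$ (all brackets vanish), and $A$ and $I$ act trivially on each other. Hence Proposition \ref{prop636} applies and identifies the non-abelian tensor product with the ordinary (super)module tensor product: $A\otimes I\cong A^{ab}\otimes_{\mathbb{F}} I^{ab}=A\otimes_{\mathbb{F}} I$, the last equality because both $A$ and $I$ are abelian (so $A^{ab}=A$ and $I^{ab}=I$). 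Under this identification the generator $l\otimes i$ corresponds to $l\otimes_{\mathbb{F}} i$.

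Next I would fix a graded vector-space complement, writing $A=I\oplus C$ with $\dim C=(m-k\mid n-h)$. This splits the module tensor product as $A\otimes_{\mathbb{F}} I=(I\otimes_{\mathbb{F}} I)\oplus(C\otimes_{\mathbb{F}} I)$. The crucial observation is that the submodule $A\square I$ defining the exterior product is generated by the elements $i\otimes i'+(-1)^{|i||i'|}(i'\otimes i)$ and $i_{\bar 0}\otimes i_{\bar 0}$, all of whose entries lie in $I$; hence $A\square I$ is contained in the summand $I\otimes_{\mathbb{F}} I$ and there coincides with the submodule $I\square I$. Passing to the quotient therefore leaves the complementary summand untouched and yields
\[A\wedge I=\frac{A\otimes I}{A\square I}\cong\frac{I\otimes I}{I\square I}\oplus(C\otimes_{\mathbb{F}} I)=(I\wedge I)\oplus(C\otimes_{\mathbb{F}} I).\]

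It then remains to compute the two superdimensions and add them. For the first summand, $I$ is an abelian Lie superalgebra of dimension $(k\mid h)$, so $I\cong A(k\mid h)$; applying Lemma \ref{lemma4} to the pair $(I,I)$ (where $[I,I]=0$) gives $I\wedge I\cong\mathcal{M}(I,I)\cong\mathcal{M}(I)$, whence by Theorem \ref{th3.3} one has $\dim(I\wedge I)=\big(\tfrac12(k^2+h^2+h-k)\mid kh\big)$. For the second summand, the super tensor product of $C\cong A(m-k\mid n-h)$ with $I\cong A(k\mid h)$ has superdimension $\big((m-k)k+(n-h)h\mid(m-k)h+(n-h)k\big)$. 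Adding the even parts gives $\tfrac12[k(2m-k-1)+h(2n-h+1)]$ and adding the odd parts gives $mh+nk-kh=mn-(m-k)(n-h)$, which is exactly the asserted formula.

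The main obstacle is the middle paragraph: one must justify carefully that $A\square I$ sits inside the single summand $I\otimes_{\mathbb{F}} I$ and that the direct-sum decomposition of $A\otimes_{\mathbb{F}} I$ descends to the quotient. The abelianness, through Proposition \ref{prop636}, is what makes the action trivial and collapses the non-abelian tensor product to a plain module tensor product, so the only genuinely structural step is tracking the generators of $A\square I$ through this identification; once that is in place, the remaining dimension count is routine arithmetic.
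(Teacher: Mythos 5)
Your argument is correct, but it follows a genuinely different route from the paper. The paper's proof is a two-line subtraction: it chooses a complementary ideal $Q$ with $A(m\mid n)=I\oplus Q$, invokes Lemma \ref{lem2.1} to get $\mathcal{M}(A(m\mid n))\cong \mathcal{M}(A(m\mid n),I)\oplus\mathcal{M}(Q)$, and then subtracts the two known multiplier dimensions from Theorem \ref{th3.3}. You instead work on the exterior-product side: starting from $\mathcal{M}(A(m\mid n),I)\cong A(m\mid n)\wedge I$, you use Proposition \ref{prop636} to collapse the non-abelian tensor product to the plain supermodule tensor product, observe that the generators of $A\square I$ all lie in the summand $I\otimes_{\mathbb{F}}I$ of $A\otimes_{\mathbb{F}}I=(I\otimes_{\mathbb{F}}I)\oplus(C\otimes_{\mathbb{F}}I)$, and so obtain $A\wedge I\cong (I\wedge I)\oplus(C\otimes_{\mathbb{F}}I)$ before adding the two superdimensions; your arithmetic agrees with the stated formula in both the even and odd components. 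The trade-off is this: the paper's proof is shorter but rests entirely on Lemma \ref{lem2.1}, which is stated there without proof; your argument is longer but essentially self-contained (it reduces everything to Proposition \ref{prop636} and the preceding corollary), it yields an explicit structural description of $A(m\mid n)\wedge I$ rather than only its dimension, and in the abelian case it effectively re-derives the instance of Lemma \ref{lem2.1} that the paper uses. Your one "structural" worry --- that $A\square I$ sits inside the summand $I\otimes_{\mathbb{F}}I$ and that the direct-sum decomposition descends to the quotient --- is unproblematic precisely because $A\otimes I$ is abelian here, so the submodule generated by those elements is just their linear span.
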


\begin{proof}
Let $I$ be a $(k\mid h)$-dimensional ideal of $(A(m \mid n)$. Then there exist an ideal $Q$ such that $A(m \mid n)=I\oplus Q$. Now, from Lemma \ref{lem2.1} we have 
	\[\mathcal{M}(A(m \mid n))\cong \mathcal{M}(A(m \mid n),I) \oplus\mathcal{M}(Q)\]
where $\dim Q=(m-k \mid n-h)$. Then Theorem \ref{th3.3} implies that
\begin{align*}
\dim\mathcal{M}(A(m \mid n),I)=&(\frac{1}{2}(m^2+n^2+n-m) \mid mn)-(\frac{1}{2}[(m-k)^2+(n-h)^2+n-h-(m-k)]\mid (m-k)(n-h)] \\
=&(\frac{1}{2}[k(2m-k-1)+h(2n-h+1)] \mid mn-(m-k)(n-h))
\end{align*}
\end{proof}

In the previous result, if we take $h=n=0$, then we have:
\begin{corollary}\cite[ Corollary 2.1.13]{org}
Let $I$ be a $k$-dimensional ideal of $A(m)$. Then 
	\[\dim \mathcal{M}(A(m),I)=\frac{1}{2} k(2m-k-1) .\] 
\end{corollary}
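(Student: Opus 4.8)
The plan is to derive this statement as the purely even specialization of Corollary \ref{corr999}. First I would observe that $A(m)$ is precisely the abelian Lie superalgebra $A(m \mid 0)$ whose odd part vanishes, so that it coincides with the ordinary $m$-dimensional abelian Lie algebra. Since the odd part $A(m)_{\bar{1}} = 0$, any ideal $I$ is automatically concentrated in even degree, and hence a $k$-dimensional ideal $I$ has superdimension $\dim I = (k \mid 0)$. Thus the hypotheses of Corollary \ref{corr999} are met with the parameter choices $n = 0$ and $h = 0$.

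Next I would substitute these values directly into the formula of Corollary \ref{corr999}. The even component becomes
\[
\frac{1}{2}\big[k(2m-k-1) + h(2n-h+1)\big] = \frac{1}{2}\big[k(2m-k-1) + 0\big] = \frac{1}{2}k(2m-k-1),
\]
while the odd component becomes
\[
mn - (m-k)(n-h) = 0 - (m-k)\cdot 0 = 0.
\]
Hence $\dim \mathcal{M}(A(m), I) = (\frac{1}{2}k(2m-k-1) \mid 0)$, and since the multiplier is purely even its dimension is recorded simply as $\frac{1}{2}k(2m-k-1)$.

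I do not anticipate any genuine obstacle here: the entire content is a routine substitution into an already-established identity, together with the observation that setting the odd dimensions to zero recovers the classical Lie-algebra picture. The only point requiring a word of care is the identification of $A(m)$ with $A(m \mid 0)$, and the consequent fact that a $k$-dimensional ideal must have trivial odd part; this is immediate because $A(m)$ has no odd elements at all, forcing $h = 0$ in the notation of the preceding corollary.
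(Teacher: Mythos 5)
Your proposal is correct and coincides exactly with the paper's own derivation: the paper introduces this corollary with the remark that it is the case $h=n=0$ of Corollary \ref{corr999}, which is precisely the substitution you carry out. The identification of $A(m)$ with $A(m\mid 0)$ and the resulting forced vanishing of the odd parameters is handled correctly, so nothing further is needed.
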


\begin{lemma}\label{lemm9875}
Let $I$ be a central graded ideal of $L$. Then $\mathcal{M}(L,I) \cong L\wedge I\cong (L/L^2\otimes I)/\left\langle (x+L^2)\otimes x \mid x \in I \right\rangle $.
Moreover, if $I\subseteq L^2$, then $\mathcal{M}(L,I) \cong  (L/L^2\otimes I)$
\end{lemma}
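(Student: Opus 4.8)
The plan is to prove Lemma~\ref{lemm9875} in three stages, leveraging Lemma~\ref{lemma4} and Proposition~\ref{prop636}. First I would establish $\mathcal{M}(L,I)\cong L\wedge I$. Since $I$ is a central graded ideal, we have $[L,I]=0$, so the central extension from Lemma~\ref{lemma4},
\[0\longrightarrow \mathcal{M}(L,I)\longrightarrow L\wedge I\longrightarrow [L,I]\longrightarrow 0,\]
collapses to an isomorphism $\mathcal{M}(L,I)\cong L\wedge I$, since the rightmost term vanishes.

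Next I would identify $L\wedge I$ with the quotient $(L/L^2\otimes I)/\langle (x+L^2)\otimes x\mid x\in I\rangle$. Because $I$ is central in $L$, the induced action of $I$ on $L$ (via the bracket) is trivial, and likewise the action of $L$ on $I$ restricted appropriately, so the relevant pieces of the tensor product simplify. Using the defining relations of the tensor product $L\otimes I$ together with centrality, the bilinear bracket relations force $L\otimes I$ to be abelian, and one can apply Proposition~\ref{prop636} (with the roles of $M$ and $N$ played by $L$ and $I$, noting $I^{ab}=I$ as $I$ is abelian being central) to obtain $L\otimes I\cong (L/L^2)\otimes_{\mathbb{K}} I$. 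The passage from the tensor product to the exterior product $L\wedge I=(L\otimes I)/(L\square I)$ then amounts to quotienting by the submodule $L\square I$ generated by the symbols $i\otimes i'+(-1)^{|i||i'|}(i'\otimes i)$ and $i_{\bar 0}\otimes i_{\bar 0}$; under the identification $L\otimes I\cong (L/L^2)\otimes I$ these correspond precisely to the relators $(x+L^2)\otimes x$ for $x\in I$, giving the claimed description.

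For the final assertion, I would assume $I\subseteq L^2$ and argue that the subspace $\langle (x+L^2)\otimes x\mid x\in I\rangle$ is trivial. Indeed, if $x\in I\subseteq L^2$, then its image $x+L^2$ in $L/L^2$ is zero, so every generator $(x+L^2)\otimes x$ of the relator submodule already vanishes in $(L/L^2)\otimes I$. Hence the quotient is the whole module and $\mathcal{M}(L,I)\cong L\wedge I\cong (L/L^2)\otimes I$.

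I expect the main obstacle to be the careful bookkeeping in the second stage: verifying that under centrality the defining relations of $L\otimes I$ genuinely reduce it to the abelian tensor product $(L/L^2)\otimes I$, and that the exterior-product relators $L\square I$ map to exactly $\langle (x+L^2)\otimes x\rangle$ rather than to a larger or smaller submodule. One must track the $\mathbb{Z}_2$-gradings and the sign conventions in relations~(3) and~(4) of the tensor product definition to ensure no spurious relations are introduced or lost when the bracket terms vanish; the graded symmetry relation defining $L\square I$ must be matched term by term against the images of $(x+L^2)\otimes x$. The rest of the argument is formal and follows directly from the results already established in the excerpt.
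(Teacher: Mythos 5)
Your proposal is correct and follows essentially the same route as the paper, whose entire proof is the two observations you elaborate: $[L,I]=0$ collapses the central extension of Lemma~\ref{lemma4} to give $\mathcal{M}(L,I)\cong L\wedge I$, and Proposition~\ref{prop636} (trivial mutual actions by centrality) identifies $L\otimes I$ with $(L/L^2)\otimes I$, after which one quotients by $L\square I$. Your version is in fact more careful than the paper's, since you explicitly flag the bookkeeping needed to match $L\square I$ with $\langle (x+L^2)\otimes x\rangle$ and you spell out why the relators vanish when $I\subseteq L^2$, neither of which the paper addresses.
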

\begin{proof}
As $[L, I] = 0$, from Lemma \ref{lemma4} $\mathcal{M}(L,I) = L\wedge I$. Now from Proposition \ref{prop636}, we conclude the proof.
\end{proof}


\section{ Capability of a pair of Lie Superalgebras}

The concept of epicenter $Z^*(L)$ of a Lie algebra $L$ was defined in \cite{Alamian2008} and the analogous notion of the epicenter $Z^*(L,I)$ for a pair of Lie algebra $(L,I)$ was defined in \cite{org} and it has been proved that $(L,I)$  is capable if and only if $Z^*(L,I)=0$. In this section, we define the epicenter $Z^*(L,I)$ for the pair of Lie superalgebras $(L, I)$ and characterized $Z^*(L,I)$.

\begin{definition}
Let $(L,I)$ be a pair of Lie superalgebras. A Lie homomorphism $\beta:N\longrightarrow L$ together with an action of  $L$ on $N$ denoted by ${}^{l}{n}$ is a relative central extension of the pair $(L,I)$ if the following conditions holds:
\begin{enumerate}
	\item $\beta(N)=I$.
	\item $\beta({}^{l}{n})=[l,\beta(n)]$.
	\item ${}^{\beta (n_l)}{n}=[n_1,n]$.
	\item $\ker \beta\subseteq Z[L,N]$, where  $Z[L,N]=\{ n \in N \mid {}^{l}{n} =0, ~\forall ~l \in L\} $
\end{enumerate}
for all $n,n_1 \in N$ and $l\in L$.	
\end{definition}

The relative central extension $\beta:N\longrightarrow L$ is universal if there is any other central extension $\alpha:M\longrightarrow L$, then there exists a unique homomorphism $\phi:N\longrightarrow M$ such that $\alpha \phi =\beta$ and $\phi({}^{l}{n})={}^{l}{\phi(n)}$.

\begin{example}\label{ex4.7}
Let us consider the same assumption in the Lemma \ref{lem3.9} and the action of $L$ on
	$S/[F, R]$ by ${}^{l}{s+[F,R]}=[f,s]+[R,F]$ where $\pi(f)=l$. Define the Lie superalgebra homomorphism
	\[\varphi:\frac{S}{[F,R]}\longrightarrow L ~~{\rm{by}}~~ s+[F,R]\longmapsto\pi(s)\]
where $\pi$ is the canonical epimorphism from $F$ to $L$. One can easily check that $\varphi$ satisfy all above conditions with $\varphi(\frac{S}{[F,R]})=\pi(S)=S/R\cong I$, hence $\varphi$ is a relative central extension of $(L,I)$.
\end{example}

\begin{definition}
The pair $(L,I)$ is capable if there exists a relative central extension $\beta:N\longrightarrow L$  with $\ker \beta = Z[L,N]$.
\end{definition}

\begin{definition}
Let $I$ be a graded ideal of Lie superalgebra $L$. The epicenter of the pair $(L,I)$ is denoted by $Z^*(L,I)$ and is defined as follows:
	 \[Z^*(L,I)=\{\cap \beta(Z(L,H)) \mid \beta: H\longrightarrow L~ {\rm{is ~a~ relative~ central~ extension~of~the ~pair~}} (L,I)\}.\]

\end{definition}
Note that, if $I=L$, then  $Z^*(L,I)= Z^*(L)$ which has been studied in \cite{Padhandetec}. Now we characterize capable Lie superalgebras. The proof of the following results are analogous to pairs of groups and pair of Lie algebras, so we omit the proof.

\begin{theorem}\label{lemma1}
Let $(L,I)$ be a pair of Lie superalgebras and $\{J_{k}\}_{k \in K}$ be a family of graded ideals of a Lie superalgebra $L$. If for each $k \in K$, the pair of quotient Lie superalgebras $(L/J_{k},I/J_{k})$ is capable, then so is $(L/ \cap_{k \in K} J_{k},I/ \cap_{k \in K} J_{k})$.
\end{theorem}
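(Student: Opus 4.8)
The plan is to mirror the classical argument for pairs of groups and Lie algebras, exploiting the characterization that a pair $(L,I)$ is capable precisely when its epicenter $Z^*(L,I)$ vanishes. The key structural fact I would establish first is a compatibility between the epicenter and quotients: for a graded ideal $J \subseteq L$ (with $J \subseteq I$ so that $I/J$ makes sense as a graded ideal of $L/J$), the epicenter of the quotient pair satisfies
\[
Z^*(L/J,\, I/J) \;=\; \bigl(J + Z^*(L,I)\bigr)/J \quad \text{when } Z^*(L,I) \subseteq J,
\]
and more precisely I want the implication that $(L/J, I/J)$ is capable if and only if $Z^*(L,I) \subseteq J$. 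This should follow from the universal relative central extension of Example \ref{ex4.7} together with the definition of $Z^*(L,I)$ as the intersection of the images $\beta(Z[L,H])$ over relative central extensions $\beta: H \to L$: restricting a relative central extension of $(L,I)$ along the canonical projection yields one of $(L/J, I/J)$, and conversely extensions of the quotient pair can be compared via the universal object.

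Granting this, the proof of the theorem is a short intersection argument. First I would note that for each $k \in K$, capability of $(L/J_k, I/J_k)$ gives $Z^*(L,I) \subseteq J_k$, by the characterization above. Taking the intersection over all $k$, I obtain
\[
Z^*(L,I) \;\subseteq\; \bigcap_{k \in K} J_k.
\]
Then applying the characterization once more, now in the direction that $Z^*(L,I) \subseteq \bigcap_k J_k$ forces $(L/\bigcap_k J_k, \, I/\bigcap_k J_k)$ to be capable, completes the argument. The logical skeleton is therefore: capability of each quotient $\Rightarrow$ epicenter lies in each $J_k$ $\Rightarrow$ epicenter lies in the intersection $\Rightarrow$ capability of the quotient by the intersection.

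The main obstacle will be establishing the quotient-compatibility of the epicenter rigorously in the super setting, since the $\mathbb{Z}_2$-grading forces sign conventions throughout the relative central extension machinery and one must check that the canonical projection carries relative central extensions of $(L,I)$ to those of $(L/J_k, I/J_k)$ while respecting the gradation and the action-triviality condition defining $Z[L,N]$. Once that lemma is in hand, the set-theoretic intersection step is routine. Since the paper explicitly states that the proofs of these characterization results are analogous to the pair-of-groups and pair-of-Lie-algebras cases and are omitted, I would invoke the established equivalence ``$(L,I)$ capable $\iff Z^*(L,I)=0$'' together with its quotient behavior, and present the intersection argument as the substantive content. The only genuine verification needed is that $Z^*(L,I) \subseteq J$ is equivalent to capability of $(L/J, I/J)$, from which the family statement is immediate.
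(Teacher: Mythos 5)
Your overall strategy (reduce capability to the vanishing of the epicenter, then run an intersection argument) is the right one, and the paper itself omits the proof, but the pivotal ``key structural fact'' you rely on is false as stated. Only one direction of your equivalence holds: if $(L/J,I/J)$ is capable then $Z^{*}(L,I)\subseteq J$ (this is exactly what the paper establishes via the pullback $E=\{(l,h)\in L\oplus H \mid \beta(h)=l+J\}$ in the proof of the theorem identifying $Z^{*}(L,I)$ with the intersection of all such $J$). The converse fails. For a concrete counterexample inside this paper, take $L=I=A(2\mid 0)$: the pair is capable by Theorem \ref{prop2}, so $Z^{*}(L,I)=0\subseteq J$ for \emph{every} graded ideal $J$; yet for a one-dimensional even ideal $J$ the quotient pair is $(A(1\mid 0),A(1\mid 0))$, which is not capable. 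Indeed, if your biconditional were true, every quotient of a capable pair would be capable, which is absurd; the epicenter can strictly grow when passing to quotients. Consequently your final step --- from $Z^{*}(L,I)\subseteq\bigcap_{k}J_{k}$ to capability of $(L/\bigcap_{k}J_{k},\,I/\bigcap_{k}J_{k})$ --- does not follow.

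The gap is repairable, and the repair is the actual content of the classical argument. First reduce to the case $\bigcap_{k}J_{k}=0$: set $J=\bigcap_{k}J_{k}$ and replace $(L,I,\{J_{k}\})$ by $(L/J,\,I/J,\,\{J_{k}/J\})$, noting $(L/J)/(J_{k}/J)\cong L/J_{k}$, $(I/J)/(J_{k}/J)\cong I/J_{k}$ and $\bigcap_{k}(J_{k}/J)=0$. It then suffices to show that $(L,I)$ itself is capable when each $(L/J_{k},I/J_{k})$ is capable and $\bigcap_{k}J_{k}=0$. Now the \emph{true} direction of your lemma gives $Z^{*}(L,I)\subseteq J_{k}$ for every $k$, hence $Z^{*}(L,I)\subseteq\bigcap_{k}J_{k}=0$, and Corollary \ref{lemma0} concludes. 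Equivalently, one assembles the relative central extensions $\beta_{k}:H_{k}\longrightarrow L/J_{k}$ into a single relative central extension $\varphi:E\longrightarrow L$ on the fibre product $E=\{(l,(h_{k})_{k}) \mid \beta_{k}(h_{k})=l+J_{k}\ \text{for all}\ k\}$ and checks that $\varphi(Z[L,E])\subseteq\bigcap_{k}J_{k}=0$. Without the preliminary reduction to $\bigcap_{k}J_{k}=0$, the containment $Z^{*}(L,I)\subseteq\bigcap_{k}J_{k}$ by itself proves nothing about the quotient pair.
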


\begin{lemma} \label{lem44}
Let $(L,I)$ be a pair of Lie superalgebras. Then $(L/Z^{*}(L,I),I/ Z^{*}(L,I))$ is capable.
\end{lemma}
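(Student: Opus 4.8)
The plan is to reduce the statement to the single–extension case and then invoke the intersection principle of Theorem~\ref{lemma1}. Fix an arbitrary relative central extension $\beta: H \longrightarrow L$ of the pair $(L,I)$ and set $K_\beta = \beta(Z(L,H))$, where $Z(L,H)=\{h\in H\mid {}^{l}h=0,\ \forall\, l\in L\}$. I will first show that each quotient pair $(L/K_\beta,\, I/K_\beta)$ is capable; since $Z^*(L,I)=\bigcap_\beta K_\beta$ by definition, Theorem~\ref{lemma1} applied to the family $\{K_\beta\}$ (indexed by all relative central extensions of $(L,I)$) then yields capability of $(L/Z^*(L,I),\, I/Z^*(L,I))$.

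To see that $\{K_\beta\}$ is an admissible family of graded ideals, I would note that for $z\in Z(L,H)$ and every $l\in L$ one has $[l,\beta(z)]=\beta({}^{l}z)=0$ by axiom (2) of a relative central extension, so $K_\beta\subseteq Z(L)$; being the image under the even map $\beta$ of a graded subspace, $K_\beta$ is a graded subspace of the center and hence a graded ideal of $L$. The \emph{key step}, and the main obstacle, is to produce for each $\beta$ a relative central extension of $(L/K_\beta, I/K_\beta)$ whose kernel is exactly the relative center. My candidate is $\gamma=\pi\circ\beta: H\longrightarrow L/K_\beta$, where $\pi:L\to L/K_\beta$ is the projection, equipped with the $L$-action on $H$ pushed down along $\pi$. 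The delicate point is that this push-down is well defined, i.e.\ that $K_\beta$ acts trivially on $H$. Here I would invoke axiom (3), ${}^{\beta(h)}h'=[h,h']$: for $z\in Z(L,H)$ and $h\in H$ one computes ${}^{\beta(z)}h=[z,h]=-(-1)^{|z||h|}[h,z]=-(-1)^{|z||h|}\,{}^{\beta(h)}z=0$, since $\beta(h)\in L$ and $z$ is annihilated by the entire $L$-action. Thus the action descends to $L/K_\beta$, and axioms (1)--(3) for $\gamma$ then follow directly from those for $\beta$ together with $\beta({}^{l}h)=[l,\beta(h)]$.

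Finally I would compute the kernel. Because $\ker\beta\subseteq Z(L,H)$, one gets $\ker\gamma=\beta^{-1}(K_\beta)=Z(L,H)$, while the relative center $Z(L/K_\beta,H)=\{h\mid {}^{\bar l}h=0,\ \forall\,\bar l\in L/K_\beta\}$ coincides with $Z(L,H)$, precisely because the $L/K_\beta$-action is the descended $L$-action. Hence $\ker\gamma=Z(L/K_\beta,H)$, so $\gamma$ witnesses the capability of $(L/K_\beta,I/K_\beta)$. Combining this with the intersection argument of the first paragraph completes the proof. The only genuinely nonformal ingredient is the triviality of the $K_\beta$-action, which rests on the Peiffer-type identity (3) and the definition of $Z(L,H)$; the remaining verifications amount to routine transport of the extension axioms through $\pi$.
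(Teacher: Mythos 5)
Your proof is correct, and it is exactly the standard argument that the paper alludes to when it omits the proof as ``analogous to pairs of groups and pairs of Lie algebras'': for each relative central extension $\beta$ one checks that $K_\beta=\beta(Z(L,H))$ is a central graded ideal contained in $I$ acting trivially on $H$ (via the Peiffer identity), so that $\pi\circ\beta$ exhibits $(L/K_\beta,I/K_\beta)$ as capable, and then Theorem~\ref{lemma1} handles the intersection. All the delicate points you flag (triviality of the $K_\beta$-action, $\ker\gamma=\beta^{-1}(K_\beta)=Z(L,H)$ via axiom (4), and $K_\beta\subseteq Z(L)\cap I$) are handled correctly.
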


\begin{corollary}\label{lemma0}
A pair of Lie superalgebras $(L,I)$ is capable if and only if $Z^{*}(L,I)=\{0\}$. 
\end{corollary}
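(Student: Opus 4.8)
The plan is to establish the two implications separately, leaning on Lemma \ref{lem44} for one direction and on the definition of the epicenter as an intersection of images for the other. Throughout I identify the subspace $Z(L,H)$ occurring in the definition of $Z^{*}(L,I)$ with the relative center $Z[L,H]=\{h\in H\mid {}^{l}h=0,\ \forall l\in L\}$, the same object that appears in the definition of a relative central extension.

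For the implication $Z^{*}(L,I)=\{0\}\Rightarrow(L,I)$ capable, I would invoke Lemma \ref{lem44} directly. That lemma asserts that the pair $(L/Z^{*}(L,I),I/Z^{*}(L,I))$ is always capable. When $Z^{*}(L,I)=\{0\}$ the quotient maps are the identity, so $(L/Z^{*}(L,I),I/Z^{*}(L,I))=(L,I)$, and the capability of $(L,I)$ follows immediately.

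For the converse, suppose $(L,I)$ is capable, so by definition there is a relative central extension $\beta:N\longrightarrow L$ with $\ker\beta=Z[L,N]$. Since $Z^{*}(L,I)$ is the intersection of the subspaces $\gamma(Z[L,H_{\gamma}])$ taken over \emph{all} relative central extensions $\gamma:H_{\gamma}\longrightarrow L$ of the pair, and $\beta$ is one such extension, I obtain the inclusion $Z^{*}(L,I)\subseteq\beta(Z[L,N])$. But capability forces $Z[L,N]=\ker\beta$, whence $\beta(Z[L,N])=\beta(\ker\beta)=\{0\}$. Therefore $Z^{*}(L,I)\subseteq\{0\}$, i.e. $Z^{*}(L,I)=\{0\}$, which completes the proof.

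The logical skeleton above is short; the genuinely delicate point is the justification of Lemma \ref{lem44}, on which the backward direction rests entirely, since there one must produce on the quotient pair a relative central extension whose kernel equals the relative center and verify that the $L$-action descends compatibly through the quotient by $Z^{*}(L,I)$. As Lemma \ref{lem44} is already available, the remaining care here is purely bookkeeping: confirming that $Z^{*}(L,I)$ is a graded ideal of $L$ contained in $I$ (note $\beta(Z[L,H])\subseteq\beta(H)=I$, so the intersection lies in $I$ and the quotient pair $(L/Z^{*}(L,I),I/Z^{*}(L,I))$ is well defined) and that the intersection defining the epicenter is compatible with the image maps $\beta$.
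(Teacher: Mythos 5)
Your proof is correct and is exactly the standard argument the paper has in mind (the paper itself omits the proof, deferring to the analogous statements for pairs of groups and of Lie algebras): the forward implication is immediate from Lemma \ref{lem44} with $Z^{*}(L,I)=\{0\}$, and the converse follows because a capable pair admits a relative central extension $\beta$ with $\ker\beta=Z[L,N]$, forcing the intersection defining $Z^{*}(L,I)$ to be contained in $\beta(\ker\beta)=\{0\}$. No gaps; your identification of $Z(L,H)$ with $Z[L,H]$ and the bookkeeping remarks are consistent with the paper's (slightly inconsistent) notation.
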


\begin{theorem}
Let $(L,I)$ be a pair of Lie superalgebras. Then $Z^{*}(L,I)$ is the intersection of all graded ideals $J$ of $L$ such that $(L/J,I/ J)$ is capable.
\end{theorem}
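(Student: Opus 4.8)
The plan is to prove the double characterization: (i) that $Z^*(L,I)$ itself is a graded ideal $J$ with the property that $(L/J, I/J)$ is capable, and (ii) that $Z^*(L,I)$ is contained in every graded ideal $J$ for which $(L/J, I/J)$ is capable. Together with the fact that $Z^*(L,I)$ is itself graded ideal of $L$ (which follows from its definition as an intersection of images $\beta(Z(L,H))$ of central extensions), these two inclusions force $Z^*(L,I)$ to equal the intersection of all such $J$.

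For part (i), I would invoke Lemma \ref{lem44} directly, which asserts that the quotient pair $(L/Z^*(L,I), I/Z^*(L,I))$ is capable. Thus $Z^*(L,I)$ is one of the ideals $J$ appearing in the proposed intersection, so the intersection of all such $J$ is contained in $Z^*(L,I)$. This gives one inclusion essentially for free.

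For part (ii), the reverse inclusion, I would take an arbitrary graded ideal $J$ of $L$ such that $(L/J, I/J)$ is capable, and show $Z^*(L,I) \subseteq J$. By Corollary \ref{lemma0}, capability of $(L/J, I/J)$ is equivalent to $Z^*(L/J, I/J) = \{0\}$. The key structural fact I expect to need is a comparison between $Z^*(L,I)$ and the epicenter of the quotient, namely the relation $Z^*(L/J, I/J) = (Z^*(L,I) + J)/J$ whenever $J \subseteq Z^*(L,I)$, or more generally that the image of $Z^*(L,I)$ in $L/J$ is contained in $Z^*(L/J, I/J)$. Combining this with $Z^*(L/J, I/J) = \{0\}$ forces $Z^*(L,I) \subseteq J$. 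Intersecting over all such $J$ then yields $Z^*(L,I) \subseteq \bigcap J$, completing the argument.

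The main obstacle will be establishing the functorial behavior of the epicenter under quotients, i.e.\ that $(Z^*(L,I)+J)/J \subseteq Z^*(L/J, I/J)$ for a graded ideal $J$; this requires lifting relative central extensions of $(L/J, I/J)$ to relative central extensions of $(L,I)$ and tracking how the center maps under the quotient, using the defining conditions of a relative central extension. This is the analogue of the corresponding group-theoretic and Lie-algebra results, and since the paper has already noted that such epicenter properties carry over to the superalgebra setting, I would expect the verification to be routine bookkeeping with the action and the bracket, with the graded signs being the only genuinely new feature to check. Granting this comparison, both inclusions close and the theorem follows.
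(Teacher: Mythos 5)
Your proposal is correct and follows essentially the same route as the paper: one inclusion comes from Lemma \ref{lem44} exactly as you say, and the other comes from pulling back a relative central extension of the capable quotient $(L/J,I/J)$ along $L\to L/J$ to a relative central extension of $(L,I)$ — the paper carries this out explicitly via the pullback $E=\{(l,h)\in L\oplus H \mid \beta(h)=l+J\}$ and observes that $Z(L,E)$ maps into $J$. The only cosmetic difference is that you package this construction as a general monotonicity statement for the epicenter under quotients and then apply $Z^*(L/J,I/J)=0$, whereas the paper applies the pullback directly to the single extension witnessing capability.
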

\begin{proof}
Let $J$ be a graded ideal of $L$ such that $(L/J,I/ J)$ is capable. Then we have a relative central extension $\beta:H\longrightarrow L/J$, with $\ker \beta=Z(L/J,H)$. Now, let us define \[E=\{(l,h)\in L\oplus H \mid \beta(h)=l+J\}\]
with an action of $L$ on $E$ given by $(l,h)^{l'}=(l^{l'},h^{l'+J})$ for all $l,l' \in L$ and $h \in H$. Let us consider $\varphi: E\longrightarrow L~{\rm{given~by}}~ (l,h)\longmapsto l$. Clearly, $\varphi$ is relative central extension of $(L,I)$, hence $Z^*(L,I)\subseteq \varphi(Z(L,E))$. On the other hand, if $(l,h) \in Z(L,E)$, then $\beta(h)=l+J=J$ as $h \in Z(L/K, H)=\ker \beta$. Thus $\varphi(l,h)=l \in J$. Therefore, $Z^*(L,I)\subseteq J$ and by Lemma \ref{lem44} the result follows. 
\end{proof}

\begin{theorem}
Let $(L_k,J_k$) be a family of pairs of Lie superalgebras. Then 
	\[Z^*(\oplus_{k\in K}L_k,\oplus_{k\in K}J_k)\subseteq \oplus_{k\in K}Z^*(L_k,J_k).\]
\end{theorem}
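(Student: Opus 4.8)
The plan is to reduce everything to the characterization established in the immediately preceding theorem, namely that $Z^*(L,I)$ equals the intersection of all graded ideals $J$ of $L$ for which the quotient pair $(L/J, I/J)$ is capable. Write $L = \oplus_{k\in K} L_k$ and $I = \oplus_{k\in K} J_k$. Since every relative central extension $\beta : H \longrightarrow L$ satisfies $\beta(H) = I$ and $\beta(Z(L,H)) \subseteq \beta(H)$, we have $Z^*(L,I) \subseteq I$; hence any $x \in Z^*(L,I)$ has the form $x = (x_k)_{k\in K}$ with $x_k \in J_k$ and only finitely many components nonzero. It therefore suffices to prove that $x_{k_0} \in Z^*(L_{k_0}, J_{k_0})$ for each fixed index $k_0 \in K$.

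First I would fix $k_0$ and let $N$ be an arbitrary graded ideal of $L_{k_0}$ such that the pair $(L_{k_0}/N, J_{k_0}/N)$ is capable. I set
\[
J = N \oplus \bigl(\oplus_{k \neq k_0} L_k\bigr),
\]
which is a graded ideal of $L$ because brackets between distinct summands vanish. The key step is the computation $I + J = (J_{k_0} + N) \oplus \bigl(\oplus_{k\neq k_0} L_k\bigr)$, from which the canonical projection induces an isomorphism of pairs $(L/J, (I+J)/J) \cong (L_{k_0}/N, (J_{k_0}+N)/N)$; the right-hand pair is precisely the image of $(L_{k_0}, J_{k_0})$ in $L_{k_0}/N$, hence capable by the choice of $N$. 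Here I use that capability is an isomorphism invariant of pairs, which follows from its characterization $Z^*(-,-) = \{0\}$ in Corollary \ref{lemma0}.

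By the characterization of the epicenter this forces $Z^*(L,I) \subseteq J$, and since every element of $J$ has its $k_0$-component lying in $N$, we conclude $x_{k_0} \in N$. As $N$ ranges over all graded ideals of $L_{k_0}$ yielding a capable quotient pair, intersecting gives $x_{k_0} \in \bigcap N = Z^*(L_{k_0}, J_{k_0})$, again by the preceding theorem applied to the pair $(L_{k_0}, J_{k_0})$. Since $k_0$ was arbitrary and $x$ has finite support, $x \in \oplus_{k\in K} Z^*(L_k, J_k)$, which is the desired inclusion.

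The main obstacle I anticipate is the bookkeeping in the isomorphism of quotient pairs: one must verify carefully that $I+J$ decomposes as stated so that $(I+J)/J$ genuinely corresponds to the image of $J_{k_0}$, and that the $\mathbb{Z}_2$-grading together with the induced $L/J$-action is respected at every stage. I also expect the inclusion to be strict in general rather than an equality, because $L \wedge I$ for a direct sum carries cross terms $L_k \wedge J_{k'}$ with $k \neq k'$; consequently no converse should be sought from this argument.
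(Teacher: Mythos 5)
Your argument takes a genuinely different route from the paper's, and it has a gap at its central step. You invoke the preceding theorem (that $Z^*(L,I)$ is the intersection of all graded ideals $J$ of $L$ with $(L/J,I/J)$ capable) for the ideal $J=N\oplus\bigl(\oplus_{k\neq k_0}L_k\bigr)$. But that theorem concerns ideals $J$ contained in $I$: its proof builds $E=\{(l,h)\in L\oplus H\mid \beta(h)=l+J\}$ and needs $\varphi(E)=I$, which forces $I+J=I$. Your $J$ contains every $L_k$ with $k\neq k_0$, so it is not contained in $I=\oplus_k J_k$ unless $J_k=L_k$ for all $k\neq k_0$; consequently ``$(L/J,I/J)$ capable'' must be read as ``$(L/J,(I+J)/J)$ capable'', and the conclusion $Z^*(L,I)\subseteq J$ no longer follows from the theorem as proved. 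The step is salvageable --- for instance one can show $Z^*(L,I)\subseteq Z^*(L,I+J)$ by restricting any relative central extension of $(L,I+J)$ to the preimage of $I$, or rerun the paper's construction with $E$ replaced by $\{(l,h)\in I\oplus H\mid \beta(h)=l+J\}$, which yields $Z^*(L,I)\subseteq Z(L)\cap I\cap J$ --- but you supply neither, and your own caveat about verifying that $(I+J)/J$ ``genuinely corresponds to the image of $J_{k_0}$'' shows you sensed the issue without closing it. A cleaner repair within your framework is to take $J=N\oplus\bigl(\oplus_{k\neq k_0}J_k\bigr)\subseteq I$ and check directly that $(L/J,I/J)$ is capable by inflating a relative central extension of $(L_{k_0}/N,J_{k_0}/N)$ along the projection $L/J\to L_{k_0}/N$.

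For comparison, the paper argues straight from the definition of the epicenter: given arbitrary relative central extensions $\beta_k:H_k\to L_k$ of $(L_k,J_k)$, the componentwise map $\phi=\oplus_k\beta_k:\oplus_kH_k\to\oplus_kL_k$ is a relative central extension of the direct-sum pair with $\phi(Z(L,H))=\oplus_k\beta_k(Z(L_k,H_k))$, and intersecting over all choices of the $\beta_k$ gives the inclusion at once. That proof never touches the quotient characterization and is both shorter and free of the containment problem above; if you want to keep your reduction-to-components strategy, you must first extend the characterization theorem to ideals $J\not\subseteq I$ as indicated.
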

\begin{proof}
Let $\beta_k: H_k\longrightarrow L_k$ be an arbitrary relative central extension of $(L_k,J_k)$ for all $k \in K$. Now let $L=\oplus_{k \in K}L_K, \, J=\oplus_{k \in K}J_k$ and $H=\oplus_{k \in K}H_k$. If we define $\phi:H\longrightarrow L$ by $\{h_k\}_{k\in K}\longmapsto \{\beta_k(h_k)\}_{k\in K}$, then $\phi$ is a relative central extension of $(L,J)$ and $\phi(Z(L,H))=\oplus_{k \in K}\beta_k(Z(L_k,H_k))$. Therefore, $Z^*(L,J)\subseteq \oplus_{k \in K}\beta_k(Z(L_k,H_k))$. Thus $Z^*(L,J)\subseteq \oplus_{k \in K}Z^*(L_k,J_k)$, since $\beta_k$ is arbitrary.
\end{proof}

\begin{corollary}
Let $(L_k,J_k)$ be a capable pair of Lie superalgebras for all $k\in K$. Then the pair $(\oplus_{k\in K}L_k,\oplus_{k\in K}J_k)$ is capable.
\end{corollary}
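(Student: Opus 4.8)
The plan is to combine the inclusion established in the preceding theorem with the capability criterion of Corollary \ref{lemma0}. The key observation is that capability of a pair is equivalent to the vanishing of its epicenter, so the entire statement collapses to a single containment of epicenters.

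First I would record that, since each pair $(L_k,J_k)$ is capable, Corollary \ref{lemma0} gives $Z^*(L_k,J_k)=\{0\}$ for every $k\in K$. Consequently the external direct sum satisfies $\oplus_{k\in K}Z^*(L_k,J_k)=\{0\}$.

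Next I would invoke the preceding theorem with $L=\oplus_{k\in K}L_k$ and $J=\oplus_{k\in K}J_k$, which yields
\[ Z^*\left(\oplus_{k\in K}L_k,\oplus_{k\in K}J_k\right)\subseteq \oplus_{k\in K}Z^*(L_k,J_k)=\{0\}. \]
Since the epicenter is by construction a graded ideal of $L$ (an intersection of images under relative central extensions), it contains $\{0\}$, so the displayed containment forces $Z^*(\oplus_{k\in K}L_k,\oplus_{k\in K}J_k)=\{0\}$. Applying Corollary \ref{lemma0} in the reverse direction to the pair $(\oplus_{k\in K}L_k,\oplus_{k\in K}J_k)$ then concludes that it is capable.

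The argument is essentially a formality once the preceding theorem is available; the only point that requires a moment's care is that that theorem supplies an \emph{inclusion} rather than an equality of epicenters. This is harmless here, because the right-hand side is the zero ideal and a sub-superspace of $\{0\}$ can only be $\{0\}$ itself. I therefore expect no genuine obstacle, and in particular no additional structural information about how relative central extensions decompose over a direct sum is needed beyond what the preceding theorem already encodes.
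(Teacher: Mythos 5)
Your proof is correct and follows exactly the route the paper intends: the corollary is stated there without proof as an immediate consequence of the preceding theorem $Z^*(\oplus_k L_k,\oplus_k J_k)\subseteq \oplus_k Z^*(L_k,J_k)$ combined with the criterion that a pair is capable if and only if its epicenter vanishes (Corollary~\ref{lemma0}). Your added remark that an inclusion into the zero ideal forces equality is exactly the (trivial) point that makes the deduction work, so there is nothing to object to.
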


\begin{definition}
Let $(L,I)$ be a pair of Lie superalgebras, the exterior $L$-center of $I$ is 
\[Z^{\wedge}_{L}(I)=\{i\in I \mid l\wedge i=0~{\rm{for~all}}~ l \in L\}\]
\end{definition}

It can be observed that if $L=I$, then $Z^{\wedge}_{L}(I)$ is the exterior $L$-center of Lie superalgebra $L$ which has been studied in \cite{Padhandetec}

\begin{theorem}\label{thm4.17}
Let $K$ be a graded ideal of $L$ such that $I\subseteq K$, then $K\subseteq Z^{\wedge}_{L}(I)$ if and only if the natural map $L\wedge I\longrightarrow L/K \wedge I/K$ is a monomorphism or equivalently the map $\mathcal{M}(L,I)\longrightarrow \mathcal{M}(L/K , I/K )$ is a monomorphism.
\end{theorem}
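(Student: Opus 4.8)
The plan is to exploit the central extension from Lemma~\ref{lemma4} together with the exactness already recorded in Lemma~\ref{lem3.33}. Recall that Lemma~\ref{lem3.33}, applied with $J=K$ (legitimate since $K$ is a graded ideal of $L$ contained in $I$ only when $I\subseteq K$ forces $I=K$; more carefully, one uses the version of Lemma~\ref{lem3.33} appropriate to the inclusion so that the tail $K\wedge L\to I\wedge L\to (L/K)\wedge(I/K)\to 0$ is exact), yields a surjection whose kernel is precisely the image of $K\wedge L$. Thus the natural map $L\wedge I\longrightarrow L/K\wedge I/K$ is a monomorphism if and only if the connecting image coming from $K\wedge L$ (equivalently the relevant elements $l\wedge k$ with $k\in K$) vanishes in $L\wedge I$. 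So the whole statement reduces to translating the condition $K\subseteq Z^{\wedge}_{L}(I)$ into the vanishing of that image.

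First I would unwind the definition: $K\subseteq Z^{\wedge}_{L}(I)$ means $l\wedge k=0$ in $L\wedge I$ for every $l\in L$ and every $k\in K$. Because $K$ is an ideal contained in $I$, each such $k$ is a legitimate second argument, and by bilinearity and the relations defining $L\wedge I$ the set $\{\,l\wedge k : l\in L,\ k\in K\,\}$ generates exactly the image of the map $K\wedge L\to I\wedge L\cong L\wedge I$ (up to the sign/graded-symmetry relations of the exterior product). Hence $K\subseteq Z^{\wedge}_{L}(I)$ is equivalent to saying that this generating set is zero, i.e. the image of $K\wedge L$ in $L\wedge I$ is trivial. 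Combined with the exact sequence from Lemma~\ref{lem3.33}, triviality of that image is exactly the assertion that $L\wedge I\to L/K\wedge I/K$ is injective. This establishes the first equivalence.

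For the second equivalence, I would pass to Schur multipliers via the central extension of Lemma~\ref{lemma4}, namely $0\to\mathcal{M}(L,I)\to L\wedge I\to [L,I]\to 0$, and the corresponding one for the quotient pair, $0\to\mathcal{M}(L/K,I/K)\to L/K\wedge I/K\to [L/K,I/K]\to 0$. The natural maps assemble into a morphism between these two short exact sequences. Since $\mathcal{M}(L,I)$ and $\mathcal{M}(L/K,I/K)$ are the kernels of the commutator maps, and the induced map on the quotients $[L,I]\to[L/K,I/K]$ is visibly injective (the commutator $[l,i]$ maps to $[l+K,i+K]$, and any element of $K$ appearing is killed cleanly because $I\subseteq K$), a short diagram chase shows that $L\wedge I\to L/K\wedge I/K$ is a monomorphism if and only if its restriction $\mathcal{M}(L,I)\to\mathcal{M}(L/K,I/K)$ is a monomorphism. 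This gives the stated equivalence between the two injectivity conditions.

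The main obstacle I anticipate is bookkeeping the graded signs and verifying that the generating set $\{l\wedge k\}$ is genuinely the full image of $K\wedge L$ rather than merely a spanning subset; the graded skew-symmetry relation $i\wedge i'=-(-1)^{|i||i'|}i'\wedge i$ and the vanishing $i_{\bar 0}\wedge i_{\bar 0}=0$ must be used to see that swapping the two tensor slots does not produce new classes outside this image. The diagram chase in the final paragraph is routine once one checks injectivity of $[L,I]\to[L/K,I/K]$, so the real care is needed in the first paragraph when identifying the kernel of the exterior-product surjection with $Z^{\wedge}_{L}(I)\cap K$.
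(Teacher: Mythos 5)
Your route is essentially the paper's: the commutative square built from the central extensions of Lemma \ref{lemma4} for $(L,I)$ and $(L/K,I/K)$, combined with the exact sequence of Lemma \ref{lem3.33}. Your first paragraph is correct, and is in fact more careful than the paper's one-line appeal to Lemma \ref{lem3.33}: the kernel of $\alpha\colon L\wedge I\to L/K\wedge I/K$ is the image of $K\wedge L$, which is generated by the classes $l\wedge k$ with $l\in L$, $k\in K$, and that image vanishes precisely when $K\subseteq Z^{\wedge}_{L}(I)$. (You are also right to flag the hypothesis: it should read $K\subseteq I$, since $I\subseteq K$ together with $Z^{\wedge}_{L}(I)\subseteq I$ would force $K=I$.)

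The gap is in your second paragraph. The induced map $[L,I]\to[L/K,I/K]$ is \emph{not} injective in general: its kernel is the image of $K\cap[L,I]$, which is nonzero whenever $K$ meets $[L,I]$ --- e.g.\ $L=I=H(1,0)$ and $K=L^{2}$ gives $[L,I]=\langle z\rangle$ mapping onto $[L/K,I/K]=0$. So your diagram chase only delivers the easy implication ($\alpha$ mono $\Rightarrow$ $\beta$ mono, because $\iota\circ\beta=\alpha\circ\iota$). For the converse one needs $\ker\alpha\subseteq\mathcal{M}(L,I)$, i.e.\ that the generators $l\wedge k$ of $\ker\alpha$ die under the commutator map, which is exactly the condition $[L,K]=0$; this is not automatic, and without it the implication ``$\beta$ mono $\Rightarrow$ $\alpha$ mono'' can fail outright. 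For the two-dimensional non-abelian Lie algebra $L=\langle x,y\mid [x,y]=y\rangle$ with $I=L$ and $K=L^{2}$, both multipliers vanish, so $\beta$ is trivially a monomorphism, yet $L\wedge L\cong L^{2}\neq 0$ maps to $L/K\wedge L/K=0$, so $\alpha$ is not. Hence the second equivalence genuinely requires $K$ to be central (as it is in all of the paper's applications, where $K\subseteq Z(L)\cap I$, and as it automatically is once $K\subseteq Z^{\wedge}_{L}(I)$, since $Z^{\wedge}_{L}(I)\subseteq Z(L)$); with $[L,K]=0$ one gets $\ker\beta=\ker\alpha$ and everything closes up. The paper's own proof merely asserts ``$\beta$ mono iff $\alpha$ mono'' from the diagram, so you have not missed an argument the paper supplies --- but the justification you propose for that step is precisely the claim that is false.
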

\begin{proof}
Considering the following commutative diagram
\begin{center}
	\begin{tikzpicture}[>=latex]
		\node (x) at (0,0) {\(\mathcal{M}(L,I) \)};
		\node (z) at (0,-3) {\(\mathcal{M}(L/K , I/K )\)};
		\node (y) at (3,0) {\(L\wedge I\)};
		\node (w) at (3,-3) {\(L/K \wedge I/K\)};
		\draw[->] (x) -- (y) node[midway,above] {$\iota$};
		\draw[->] (x) -- (z) node[midway,left] {$\beta$};
		\draw[->] (z) -- (w) node[midway,below] {$\iota$};
		\draw[->] (y) -- (w) node[midway,right] {$\alpha$};
	\end{tikzpicture}\\
 
\end{center} 
From the above commutative diagram one can easily observe that $\beta$ is monomorphism if and only if $\alpha$ is. Now the proof completes, by using  Lemma \ref{lem3.33}   .
\end{proof}

The following result gives us what the connection between the exterior $L$-center of pair $(L,I)$ and the exterior $L$-center of Lie superalgebra $L$.

\begin{proposition}\label{prop2.3.5}
Let $(L, I)$ be a pair of Lie superalgebras. Then $Z^{\wedge}_{L}(I)\subseteq Z^{\wedge}(L)$.
\end{proposition}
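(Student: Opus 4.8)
The plan is to produce a natural Lie superalgebra homomorphism $\phi \colon L \wedge I \longrightarrow L \wedge L$ induced by the inclusion of the graded ideal $I$ into $L$, and then transport the defining vanishing condition of $Z^{\wedge}_L(I)$ along $\phi$. The key observation is that an element $i \in I$ lies in $Z^{\wedge}_L(I)$ precisely when $l \wedge i = 0$ in $L \wedge I$ for every $l \in L$, whereas membership in $Z^{\wedge}(L) = Z^{\wedge}_L(L)$ requires $l \wedge i = 0$ in $L \wedge L$; a homomorphism $\phi$ acting by $l \wedge i \mapsto l \wedge i$ converts the first vanishing into the second.

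First I would construct $\phi$ using the universal property in Proposition \ref{pro2.6}. Regarding $(L, L)$ as a pair with the canonical actions of Example \ref{ex999}, the map $L \times L \longrightarrow L \wedge L$, $(l, x) \mapsto l \wedge x$, is a universal exterior Lie pairing. Restricting it to $L \times I$ (using $I \subseteq L$) yields a bilinear map $g \colon L \times I \longrightarrow L \wedge L$, $(l, i) \mapsto l \wedge i$. I would then verify that $g$ is an exterior Lie pairing of the pair $(L, I)$: each Lie-pairing identity, together with the two exterior relations $f(i,i') = -(-1)^{|i||i'|} f(i',i)$ and $f(i_{\bar 0}, i_{\bar 0}) = 0$, arises as the special case — obtained by specializing the relevant argument(s) to lie in $I$ — of the corresponding identity already satisfied by the universal pairing on $(L, L)$. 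Here one uses that the action of $L$ on $I$ and the action of $I$ on $L$ coming from the ideal structure are simply the restrictions of the bracket action of $L$ on itself, so that the two pairings are genuinely compatible. By the universality in Proposition \ref{pro2.6}, there is then a unique Lie homomorphism $\phi \colon L \wedge I \longrightarrow L \wedge L$ satisfying $\phi(l \wedge i) = l \wedge i$.

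With $\phi$ in hand the conclusion is immediate. Let $i \in Z^{\wedge}_L(I)$. Then $l \wedge i = 0$ in $L \wedge I$ for all $l \in L$, and applying $\phi$ gives $l \wedge i = \phi(l \wedge i) = 0$ in $L \wedge L$ for all $l \in L$. Hence $i \in Z^{\wedge}_L(L) = Z^{\wedge}(L)$, which is exactly the desired inclusion $Z^{\wedge}_L(I) \subseteq Z^{\wedge}(L)$.

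The only genuine obstacle is the verification that the restricted map $g$ satisfies all the exterior-Lie-pairing axioms, equivalently the well-definedness of $\phi$. I expect this step to be routine once one notes that the actions used in the pair $(L, I)$ are the restrictions of those in $(L, L)$, so that passing to the subpair imposes no new relations and every required identity is inherited from the universal pairing on $L$.
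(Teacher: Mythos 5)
Your proposal is correct and follows essentially the same route as the paper: the paper's one-line proof simply observes that $i\wedge l=0$ for all $l\in L$ forces $i\in Z^{\wedge}(L)$, implicitly relying on exactly the natural map $L\wedge I\longrightarrow L\wedge L$ that you construct. Your version is more careful in that it explicitly builds this homomorphism from the universal property of Proposition \ref{pro2.6} and checks that the restricted pairing satisfies the exterior axioms, which is a genuine (and welcome) justification of the step the paper leaves tacit, but it is not a different argument.
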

\begin{proof}
Let $i\in Z^{\wedge}_{L}(I)$, then $i\in I\subseteq L$ with $i\wedge l=0$ for all $l \in L$, then $i \in Z^{\wedge}(L)$ and the result follows.
\end{proof}

\begin{proposition}\label{prop6}
Let $(L,I)$ be a pair of Lie superalgebras and $K$ be an ideal of $L$ containing $I$. Let  $0\longrightarrow R \longrightarrow F\overset{\pi}{\longrightarrow} L\longrightarrow 0$ be a free presentation. If $S$ and $T$ are two ideals in $F$ with $S/R\cong I$,  $T/R\cong K$, then the following sequence is exact:
$0\longrightarrow \frac{R\cap [T,F]}{[R,F]} \longrightarrow \mathcal{M}(L,I)  \overset{\sigma} \longrightarrow \mathcal{M}(L/K,I/K)\longrightarrow \frac{K\cap (L,I)}{[K,L]}\longrightarrow 0$. Moreover, if $K$ is a central ideal, then $K\subseteq Z^*(L,I)$ if and only if $\frac{\mathcal{M}(L/K,I/L)} {\mathcal{M}(L,I)}\cong K\cap [L,I]$.
\end{proposition}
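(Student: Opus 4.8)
The plan is to pull everything back to a single free presentation $0\to R\to F\overset{\pi}{\longrightarrow} L\to 0$ and to read off all four terms as subquotients of $F$. Since $K\subseteq I$, the kernels satisfy $R\subseteq T\subseteq S\subseteq F$, and we have $L\cong F/R$, $I\cong S/R$, $K\cong T/R$, $L/K\cong F/T$ and $I/K\cong S/T$. The free-presentation formula for the relative multiplier applied to both pairs gives $\mathcal{M}(L,I)\cong (R\cap[F,S])/[F,R]$ and $\mathcal{M}(L/K,I/K)\cong (T\cap[F,S])/[F,T]$. From the proof of Lemma \ref{lemma4} I would also record the identifications $[L,I]\cong([F,S]+R)/R$, $[K,L]\cong([F,T]+R)/R$, and hence $K\cap[L,I]\cong (T\cap([F,S]+R))/R$.

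I would then take $\sigma$ to be the map induced by the inclusion $R\cap[F,S]\subseteq T\cap[F,S]$, that is $\sigma(x+[F,R])=x+[F,T]$; this is well defined since $R\subseteq T$ forces $[F,R]\subseteq[F,T]$. Its kernel is $(R\cap[F,S]\cap[F,T])/[F,R]=(R\cap[F,T])/[F,R]$, using $[F,T]\subseteq[F,S]$; this is exactly $\tfrac{R\cap[T,F]}{[R,F]}$, giving exactness at $\mathcal{M}(L,I)$ and injectivity of the left-hand map. The cokernel is $\operatorname{coker}\sigma=(T\cap[F,S])/\big((R\cap[F,S])+[F,T]\big)$, and the technical heart is to identify it with $\tfrac{K\cap[L,I]}{[K,L]}$. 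For this I would apply the Dedekind modular law twice — once with $R\subseteq T$ to get $T\cap([F,S]+R)=(T\cap[F,S])+R$, and once with $[F,T]\subseteq T\cap[F,S]$ — together with the second and third isomorphism theorems, to obtain
\[
\operatorname{coker}\sigma=\frac{T\cap[F,S]}{[F,T]+(R\cap[F,S])}\cong\frac{(T\cap[F,S])+R}{[F,T]+R}\cong\frac{T\cap([F,S]+R)}{[F,T]+R}\cong\frac{K\cap[L,I]}{[K,L]}.
\]
This establishes exactness at $\mathcal{M}(L/K,I/K)$ and surjectivity onto the last term, completing the four-term sequence.

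For the ``moreover'', I would specialise to $K$ central, so $[K,L]=0$ and the last term becomes $K\cap[L,I]$; the exact sequence then yields a surjection $\mathcal{M}(L/K,I/K)\longrightarrow K\cap[L,I]$ with kernel $\sigma(\mathcal{M}(L,I))$. Thus the asserted isomorphism $\tfrac{\mathcal{M}(L/K,I/K)}{\mathcal{M}(L,I)}\cong K\cap[L,I]$ holds precisely when $\sigma$ is a monomorphism, equivalently when the leading term $\tfrac{R\cap[T,F]}{[R,F]}$ vanishes. By Theorem \ref{thm4.17} (applied to the graded ideal $K\subseteq I$) the map $\sigma\colon\mathcal{M}(L,I)\to\mathcal{M}(L/K,I/K)$ is a monomorphism if and only if $K\subseteq Z^{\wedge}_L(I)$, and since $Z^{\wedge}_L(I)=Z^{*}(L,I)$ this is exactly $K\subseteq Z^{*}(L,I)$, as required.

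The main obstacle I anticipate is the cokernel identification: it needs two correctly-oriented applications of the modular law and the isomorphism theorems, and every inclusion invoked ($R\subseteq T\subseteq S$ and $[F,T]\subseteq T\cap[F,S]$) must genuinely hold — which is why the hypothesis has to be read as $K\subseteq I$ (forcing $T\subseteq S$, so that $I/K$ is a genuine ideal of $L/K$). A secondary point in the ``moreover'' is that the quotient $\tfrac{\mathcal{M}(L/K,I/K)}{\mathcal{M}(L,I)}$ only makes literal sense once $\sigma$ is injective, so I would phrase the equivalence with the injectivity of $\sigma$ as the pivot and then invoke Theorem \ref{thm4.17} together with the identity $Z^{\wedge}_L(I)=Z^{*}(L,I)$.
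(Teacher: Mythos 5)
Your proof is correct and follows essentially the same route as the paper: both identify all four terms as subquotients of the free presentation, take $\sigma$ to be induced by the inclusion $R\cap[F,S]\subseteq T\cap[F,S]$ together with $[F,R]\subseteq[F,T]$, and reduce the ``moreover'' to the injectivity of $\sigma$ via Theorem \ref{thm4.17} and the identity $Z^{\wedge}_{L}(I)=Z^{*}(L,I)$. Your reading of the hypothesis as $K\subseteq I$ (the paper's ``containing'' must be ``contained in'', since otherwise $I/K$ is undefined and the first map is not an inclusion) and your explicit modular-law computation of $\operatorname{coker}\sigma$ only spell out steps the paper's proof asserts without detail.
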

\begin{proof}
With the given assumption in the theorem we have $\mathcal{M}(L,I)= \frac{R \cap [S,F]}{[R,F]}$. Since $F/R \cong L$, so $L/K \cong F/T$ and $0 \longrightarrow T/R \longrightarrow F/R \longrightarrow F/T \longrightarrow 0$ is free presentation of $L/K$, then $\mathcal{M}(L/K,I/K)=\frac{T \cap [S,F]}{[T,F]}$. Now $\frac{K \cap [I,L]}{[K,L]}\cong \frac{T/R \cap [S/R,F/R]}{[T/R,F/R]} \cong  \frac{T \cap [S,F]+R}{[T,F]+R}$. The map $ \frac{R \cap [T,F]}{[R,F]}\xrightarrow{\alpha} \frac{R \cap [S,F]}{[R,F]}$ is an inclusion map and ${\rm{im}} (\alpha)=\frac{R \cap [T,F]}{[R,F]}$. If we define $\frac{R \cap [S,F]}{[R,F]}\xrightarrow{\beta} \frac{T \cap [S,F]}{[T,F]}$ by $x+[R,F]\longmapsto x+[T,F]$, then $\beta$ is well defined and a Lie superalgebra homomorphism with $\ker \beta=\frac{R \cap [T,F]}{[R,F]}$ and ${\rm{im}} (\beta)=\frac{R\cap [S,F]}{[T,F]}$. Finally, consider the map $\frac{T \cap [S,F]}{[T,F]}\xrightarrow{\phi}\frac{T\cap [S,F]+R}{[T,F]+R}$ by $y+[T,F]\longmapsto y+([T,F]+R)$. Here, $\ker (\phi)=\frac{R\cap [S,F]}{[T,F]}$ and ${\rm{im}} (\phi)=\frac{T\cap [S,F]+R}{[T,F]+R}\cong \frac{K \cap [I,L]}{[K,L]}$. We conclude that the given sequence is an exact sequence.\\

Let $K$ be a central ideal of $L$, then $[L,K]=0 \Leftrightarrow [T/R,F/R]=R \Leftrightarrow [T,F]\subseteq R$ $\Leftrightarrow$ $\frac{R\cap [T,F]}{[R,F]}=\frac{[T,F]}{[R,F]}$. On the other hand,  $K\subseteq Z^*(L,I)$ if and only if $\mathcal{M}(L,I)\longrightarrow \mathcal{M}(L/K,I/L)$ is monomorphism, and from the exact sequence in the first part of this theorem this is equivalent to $[T,F]=[R,F]$. Then we have $\ker\phi=\frac{R\cap [S,F]}{[R,F]}=\mathcal{M}(L,I)$ and this is true if and only if $\frac{\mathcal{M}(L/K,I/K)}{\mathcal{M}(L,I)}\cong K\cap[L,I]$. 
\end{proof}

\begin{corollary}\label{coro852}
With same assumption in the above theorem we have $K\subseteq Z^*(L,I)$ if and only if 
$\dim \mathcal{M}(L,I)=\dim \mathcal{M}(L/K,I/K)-\dim (K\cap [L,I])$.
\end{corollary}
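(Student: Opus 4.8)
The plan is to read the corollary straight off the four-term exact sequence established in Proposition \ref{prop6}, exploiting the hypothesis that $K$ is central to simplify its last term, and then passing to superdimensions. Since $K$ is a central ideal we have $[K,L]=0$, so the sequence of the proposition becomes
\begin{equation*}
0\longrightarrow \frac{R\cap [T,F]}{[R,F]} \longrightarrow \mathcal{M}(L,I)\overset{\sigma}{\longrightarrow} \mathcal{M}(L/K,I/K)\longrightarrow K\cap [L,I]\longrightarrow 0.
\end{equation*}
All four terms are finite-dimensional $\mathbb{Z}_2$-graded superspaces and every arrow is a homomorphism of even degree, so the sequence restricts to an ordinary exact sequence of vector spaces in each graded component.

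First I would apply the alternating-sum (Euler characteristic) identity to this exact sequence degree by degree, which gives
\begin{equation*}
\dim \frac{R\cap [T,F]}{[R,F]} - \dim \mathcal{M}(L,I) + \dim \mathcal{M}(L/K,I/K) - \dim (K\cap [L,I]) = 0
\end{equation*}
as an equality of superdimensions, hence
\begin{equation*}
\dim \mathcal{M}(L,I) = \dim \mathcal{M}(L/K,I/K) - \dim (K\cap [L,I]) + \dim \frac{R\cap [T,F]}{[R,F]}.
\end{equation*}
Consequently the asserted equality holds if and only if the leftmost term $\frac{R\cap [T,F]}{[R,F]}$ vanishes, i.e. has superdimension $(0\mid 0)$.

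Next I would identify this vanishing with the containment $K\subseteq Z^*(L,I)$. By the \emph{moreover} part of Proposition \ref{prop6}, $K\subseteq Z^*(L,I)$ holds precisely when $\sigma\colon \mathcal{M}(L,I)\to \mathcal{M}(L/K,I/K)$ is a monomorphism; and by exactness of the displayed sequence the kernel of $\sigma$ is exactly the image of the injection $\frac{R\cap [T,F]}{[R,F]}\hookrightarrow \mathcal{M}(L,I)$, so $\sigma$ is injective if and only if $\frac{R\cap [T,F]}{[R,F]}=0$. Chaining these equivalences with the conclusion of the preceding paragraph yields the corollary in both directions.

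I do not anticipate a genuine obstacle, since the argument is essentially a formal dimension count on a known exact sequence. The only point requiring care is that the quantities involved are superdimensions, so the Euler-characteristic identity must be invoked separately on the even and odd parts; this is legitimate precisely because all the connecting maps in Proposition \ref{prop6} are homogeneous of even degree and the superspaces in question are finite-dimensional.
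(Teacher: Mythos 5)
Your argument is correct and is exactly the intended derivation: the paper leaves this corollary unproved precisely because it is the standard alternating-sum dimension count on the four-term exact sequence of Proposition \ref{prop6}, combined with the fact (used in the proof of that proposition) that $K\subseteq Z^*(L,I)$ is equivalent to $\sigma$ being a monomorphism, i.e. to the vanishing of the term $\frac{R\cap[T,F]}{[R,F]}$. Your care about applying the Euler-characteristic identity separately in each $\mathbb{Z}_2$-graded component is appropriate and consistent with the paper's conventions.
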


\begin{theorem}
Considering the Lie superalgebra homomorphism defined in the Example \ref{ex4.7}, we have $Z^{\wedge}_{L}(I)=\varphi(Z(L,S/[R,F]))$.
\end{theorem}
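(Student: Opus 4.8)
The plan is to unwind both sides of the asserted equality into explicit conditions on representatives in the free Lie superalgebra $F$, and then observe that these conditions are literally the same. Throughout I keep the notation of Example \ref{ex4.7}: $0\longrightarrow R \longrightarrow F \overset{\pi}{\longrightarrow} L$ is a free presentation, $I\cong S/R$ for a graded ideal $S$ of $F$, the Lie superalgebra in play is $N=S/[R,F]$, the action of $L$ on $N$ is ${}^{l}(s+[R,F])=[f,s]+[R,F]$ with $\pi(f)=l$, and $\varphi(s+[R,F])=\pi(s)$. I will freely use $[R,F]=[F,R]$.

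First I would compute the relative center $Z(L,N)$. By definition $s+[R,F]\in Z(L,N)$ precisely when ${}^{l}(s+[R,F])=0$ for every $l\in L$; since $\pi$ is surjective this amounts to $[f,s]\in[R,F]$ for all $f\in F$, i.e. $[F,s]\subseteq[R,F]$. Applying $\varphi$ then gives
\[ \varphi\big(Z(L,N)\big)=\{\,\pi(s)\mid s\in S,\ [F,s]\subseteq[R,F]\,\}\subseteq I. \]
I would also record that this description is consistent on $I\cong S/R$: replacing $s$ by $s+r$ with $r\in R$ changes $[F,s]$ by $[F,r]\subseteq[R,F]$, so the defining condition does not depend on the chosen coset representative modulo $R$.

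Next I would describe $Z^{\wedge}_{L}(I)$ via the identification of Lemma \ref{lem3.9}, namely $L\wedge I\cong [F,S]/[F,R]$, under which the exterior element $(f+R)\wedge(s+R)$ corresponds to the commutator coset $[f,s]+[R,F]$. For $i=s+R\in I$ this makes the condition $l\wedge i=0$ for all $l=\pi(f)$ equivalent to $[f,s]\in[R,F]$ for every $f\in F$, that is, $[F,s]\subseteq[R,F]$. Hence $i\in Z^{\wedge}_{L}(I)$ if and only if one (equivalently every) representative $s$ satisfies $[F,s]\subseteq[R,F]$, which is exactly the condition describing $\varphi(Z(L,N))$. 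Comparing the two descriptions yields $Z^{\wedge}_{L}(I)=\varphi(Z(L,N))$.

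The main obstacle will be pinning down the isomorphism of Lemma \ref{lem3.9} at the level of elements, specifically verifying that it carries $l\wedge i$ to the commutator coset $[f,s]+[R,F]$ and that this element-level dictionary is compatible with the commutator map $[.,.]:L\wedge I\longrightarrow [L,I]$ and with the action in Example \ref{ex4.7}. Once that correspondence is established, the equality of the two sets follows from a direct comparison of defining conditions, together with the representative-independence checks above, and requires no further computation.
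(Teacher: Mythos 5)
Your proposal is correct and follows essentially the same route as the paper: the paper invokes Corollary \ref{corr0303} (itself a restatement of Lemma \ref{lem3.9}) to identify $[\bar{s},l]=0$ with $\pi(s)\wedge l=0$ and then reads off the set equality, which is exactly your unwinding of both sides to the condition $[F,s]\subseteq[R,F]$. The element-level dictionary you flag as the main obstacle is precisely what Corollary \ref{corr0303} supplies, and your extra check of independence from the representative modulo $R$ is a welcome detail the paper leaves implicit.
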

\begin{proof}
From Corollary \ref{corr0303} we have, $[\bar{s},l]=0$ if and only if $\pi(s)\wedge l=0$ for all $s \in S$ and $l \in L$. Thus, 
\begin{align*}
	\varphi(Z(L,S/[R,F]) & =\{\varphi(\bar{s}) \mid [\bar{s},l]=0 ~{\rm{for~all}}~ l \in L\}\\
	&=\{\pi(s) \mid \pi(s)\wedge l=0 ~{\rm{for~all}}~ l\in L\}\\
	&=Z^{\wedge}_{L}(I).
\end{align*}
\end{proof}

Now we will use the notation of universal central extensions of Lie superalgebras to proof an important theorem.

\begin{corollary}
Let $0\longrightarrow R \longrightarrow F\overset{\pi}{\longrightarrow} L\longrightarrow 0$ be a free presentation of the pair of Lie superalgebras $(L,I)$ . If $\varphi:\frac{S}{[F,R]}\longrightarrow L$ is a universal relative central extension, then 
\[Z^{\wedge}_{L}(I)=Z^*_{L}(I).\]
\end{corollary}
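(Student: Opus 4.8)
The plan is to reduce the statement to the identity $Z^{\wedge}_{L}(I)=\varphi\big(Z(L,S/[F,R])\big)$ established in the theorem immediately preceding this corollary, and then to prove that the right-hand side also computes the epicenter, i.e.\ $Z^*_{L}(I)=\varphi\big(Z(L,S/[F,R])\big)$. Combining these two equalities yields the result. Thus the whole argument rests on showing that for a \emph{universal} relative central extension $\varphi:S/[F,R]\longrightarrow L$, the image $\varphi\big(Z(L,S/[F,R])\big)$ of its relative center coincides with the intersection defining $Z^*_{L}(I)$.

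First I would prove the inclusion $Z^*_{L}(I)\subseteq \varphi\big(Z(L,S/[F,R])\big)$. By Example \ref{ex4.7}, the map $\varphi:S/[F,R]\longrightarrow L$ is itself a relative central extension of the pair $(L,I)$. Since by definition $Z^*_{L}(I)=Z^*(L,I)=\bigcap \beta(Z(L,H))$, where the intersection runs over all relative central extensions $\beta:H\longrightarrow L$ of $(L,I)$, the particular term coming from $\varphi$ already contains the intersection, giving this inclusion for free.

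Next I would establish the reverse inclusion $\varphi\big(Z(L,S/[F,R])\big)\subseteq Z^*_{L}(I)$. Fix an arbitrary relative central extension $\beta:H\longrightarrow L$ of $(L,I)$. Because $\varphi$ is universal, there is a unique homomorphism $\phi:S/[F,R]\longrightarrow H$ with $\beta\phi=\varphi$ and $\phi({}^{l}n)={}^{l}\phi(n)$ for all $l\in L$. The key observation is that $\phi$ carries the relative center into the relative center: if $n\in Z(L,S/[F,R])$, then ${}^{l}n=0$ for all $l\in L$, whence ${}^{l}\phi(n)=\phi({}^{l}n)=\phi(0)=0$, so $\phi(n)\in Z(L,H)$. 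Consequently $\varphi(n)=\beta(\phi(n))\in\beta(Z(L,H))$, and therefore $\varphi\big(Z(L,S/[F,R])\big)\subseteq\beta(Z(L,H))$. As $\beta$ was arbitrary, intersecting over all relative central extensions gives $\varphi\big(Z(L,S/[F,R])\big)\subseteq Z^*_{L}(I)$.

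Putting the two inclusions together yields $Z^*_{L}(I)=\varphi\big(Z(L,S/[F,R])\big)=Z^{\wedge}_{L}(I)$, where the last equality is the cited theorem. I expect the main obstacle to be the verification in the reverse inclusion that the universal comparison map $\phi$ is genuinely compatible with the $L$-action; once that action-equivariance is in hand the passage $Z(L,S/[F,R])\to Z(L,H)$ is immediate, and the rest of the proof is purely formal bookkeeping with the intersection defining the epicenter.
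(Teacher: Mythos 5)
Your proposal is correct and follows essentially the same route as the paper: both establish $\varphi\bigl(Z(L,S/[F,R])\bigr)\subseteq\beta(Z(L,H))$ for an arbitrary relative central extension $\beta$ via the universal comparison map and its action-equivariance, obtain the reverse inclusion from the fact that $\varphi$ is itself a relative central extension, and then invoke the preceding theorem identifying $\varphi\bigl(Z(L,S/[F,R])\bigr)$ with $Z^{\wedge}_{L}(I)$. No substantive differences.
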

\begin{proof}
Let $\rho:N\longrightarrow L$ be an arbitrary relative central extension of $(L,I)$. As $\varphi$ is universal, there exists a homomorphism $\phi:\frac{S}{[F,R]}\longrightarrow N$ such that $\rho \phi =\varphi$ and $\phi({}^{l}{\bar{s}})={}^{l}{\phi(\bar{s})}$ for all $l \in L$ and $\bar{s}\in S/[R,F]$. Thus, if $\bar{s}\in Z(L,S/[R,F])$, then $\phi(\bar{s})\in Z(L,N)$. Then $\varphi(\bar{s})=\rho o\phi(\bar{s})\in \rho(Z(L,N))$, hence $\varphi(Z(L,S/[R,F]))\subseteq Z^*(L,I)$. Therefore, $\varphi(Z(L,S/[R,F]))= Z^*(L,I)$, since $\varphi$ is a relative central extension of $(L,I)$. Finally, from the above theorem the result follows. 
\end{proof}

\begin{corollary}\label{cor4.22}
The pair $(L,I)$ is capable if and only if $Z^{\wedge}_{L}(I)=0$
\end{corollary}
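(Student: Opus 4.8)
The plan is to read off the result by chaining the two equivalences established just above, so that essentially no fresh computation is needed. First I would recall Corollary~\ref{lemma0}, which characterises capability of the pair in terms of the epicenter: $(L,I)$ is capable if and only if $Z^{*}(L,I)=\{0\}$. This reduces the problem to showing that the epicenter vanishes exactly when the exterior $L$-center $Z^{\wedge}_{L}(I)$ vanishes, so that the two a priori different obstructions to capability coincide.

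Second, I would invoke the immediately preceding corollary, in which the universal relative central extension $\varphi:S/[F,R]\longrightarrow L$ coming from a free presentation (as constructed in Example~\ref{ex4.7}) is used to identify the two invariants, namely $Z^{\wedge}_{L}(I)=Z^{*}_{L}(I)=Z^{*}(L,I)$. Concatenating the two biconditionals then yields
\[
(L,I)\ \text{capable}\;\Longleftrightarrow\;Z^{*}(L,I)=0\;\Longleftrightarrow\;Z^{\wedge}_{L}(I)=0,
\]
which is precisely the assertion. In this sense the corollary is the clean payoff of the machinery: the genuine work (relating the exterior $L$-center to $\varphi\bigl(Z(L,S/[R,F])\bigr)$, and that in turn to the epicenter) has already been carried out in the theorem and corollary preceding it.

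The only delicate point, and the step I expect to require the most care, is that the identification $Z^{\wedge}_{L}(I)=Z^{*}(L,I)$ rests on the hypothesis that the map of Example~\ref{ex4.7} is genuinely a \emph{universal} relative central extension. I would therefore verify that the free-presentation model $S/[F,R]$ satisfies the universal property among relative central extensions of $(L,I)$, arguing exactly as in the Lie-algebra analogue; once this universality is secured, the preceding corollary applies verbatim and the short two-step argument above closes the proof.
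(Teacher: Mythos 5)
Your proposal is correct and matches the paper's intent exactly: the corollary is stated without proof precisely because it is the immediate concatenation of Corollary~\ref{lemma0} ($(L,I)$ capable iff $Z^{*}(L,I)=0$) with the preceding corollary identifying $Z^{\wedge}_{L}(I)$ with $Z^{*}(L,I)$ via the universal relative central extension. Your added remark about needing to verify universality of $\varphi:S/[F,R]\to L$ is a fair observation, since the paper only hypothesizes that universality in the preceding corollary but states this one unconditionally.
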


In \cite{Padhandetec} Padhan proved that for any finite dimensional Lie superalgebra $L$, we have $Z^*(L)=Z^\wedge (L)$ and $L$ is capable if and only if $Z^\wedge (L)=0$. Then from Proposition \ref{prop2.3.5} and Corollary \ref{cor4.22} it can be observed that if $L$ is capable, then the pair $(L,I)$ is also capable.

\begin{corollary}
The pair of Lie superalgebras $(L, I)$ is capable if and only if $\mathcal{M}(L,I)\xrightarrow{\delta} \mathcal{M}(L/\left\langle z \right\rangle , I//\left\langle z \right\rangle)$ has a non-trivial kernel for all non-zero elements $z \in Z(L)\cap I$.
\end{corollary}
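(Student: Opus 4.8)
The plan is to read the statement off from the two tools already in place: Corollary \ref{cor4.22}, which says that $(L,I)$ is capable exactly when $Z^{\wedge}_{L}(I)=0$, and Theorem \ref{thm4.17}, which converts the membership of a graded ideal in $Z^{\wedge}_{L}(I)$ into the injectivity of the induced map on Schur multipliers. The first step is to locate $Z^{\wedge}_{L}(I)$. By Proposition \ref{prop2.3.5} we have $Z^{\wedge}_{L}(I)\subseteq Z^{\wedge}(L)$, and since the commutator map sends $l\wedge l'$ to $[l,l']$, the relation $l\wedge l'=0$ forces $[l,l']=0$, so $Z^{\wedge}(L)\subseteq Z(L)$. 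Combining this with the defining inclusion $Z^{\wedge}_{L}(I)\subseteq I$ gives $Z^{\wedge}_{L}(I)\subseteq Z(L)\cap I$. Hence every nonzero element of $Z^{\wedge}_{L}(I)$ is already a nonzero element of $Z(L)\cap I$, and conversely a homogeneous $z\in Z(L)\cap I$ lies in $Z^{\wedge}_{L}(I)$ precisely when the one-dimensional graded ideal $\langle z\rangle$ it spans is contained in $Z^{\wedge}_{L}(I)$ (as $z$ is central, $\langle z\rangle=\mathbb{F}z$, and $l\wedge \lambda z=\lambda(l\wedge z)$).

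Next I would apply Theorem \ref{thm4.17} with the central graded ideal $K=\langle z\rangle\subseteq I$, where $z\in Z(L)\cap I$ is nonzero. The theorem then yields that $\langle z\rangle\subseteq Z^{\wedge}_{L}(I)$ if and only if the natural map $\delta:\mathcal{M}(L,I)\longrightarrow \mathcal{M}(L/\langle z\rangle,\, I/\langle z\rangle)$ is a monomorphism, that is, $\ker\delta=0$. Negating this equivalence gives the crucial local criterion: $z\notin Z^{\wedge}_{L}(I)$ if and only if $\delta$ has a nontrivial kernel. Assembling the chain of equivalences then finishes the proof: $(L,I)$ is capable iff $Z^{\wedge}_{L}(I)=0$ by Corollary \ref{cor4.22}; by the first paragraph this holds iff no nonzero $z\in Z(L)\cap I$ belongs to $Z^{\wedge}_{L}(I)$; and by the local criterion this is exactly the assertion that for every nonzero $z\in Z(L)\cap I$ the map $\delta$ has nontrivial kernel.

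The one point that needs care — and the only genuine obstacle — is the direction of the containment hypothesis in Theorem \ref{thm4.17}: here the deleted ideal $K=\langle z\rangle$ sits \emph{inside} $I$, so one must invoke the theorem in the form appropriate to $K\subseteq I$ (with $I/\langle z\rangle$ read as the image of $I$ in $L/\langle z\rangle$), rather than the degenerate regime $I\subseteq K$. One must also confirm that, for a homogeneous central $z$, passing between ``$z\in Z^{\wedge}_{L}(I)$'' and ``$\langle z\rangle\subseteq Z^{\wedge}_{L}(I)$'' is legitimate, which follows from $Z^{\wedge}_{L}(I)$ being a graded subspace of $Z(L)\cap I$ closed under the $L$-action. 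Once these two bookkeeping matters are settled, the remainder is a purely formal chaining of the cited results, with no further computation required.
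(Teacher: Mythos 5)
Your argument is correct and follows essentially the same route as the paper's own proof: both reduce the statement to Corollary \ref{cor4.22} together with Theorem \ref{thm4.17} applied to the central graded ideal $K=\langle z\rangle$. You are in fact slightly more careful than the paper, since you explicitly justify the containment $Z^{\wedge}_{L}(I)\subseteq Z(L)\cap I$ (needed in the converse direction) and note that Theorem \ref{thm4.17} must be read with $K\subseteq I$ rather than the $I\subseteq K$ printed in its statement; the paper uses both points silently.
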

\begin{proof}
Let $(L,I)$ be capable, then $Z^{\wedge}_{L}(I)=0$. If we assume that there is a non-zero $z\in Z(L)\cap I$ such that $\mathcal{M}(L,I)\xrightarrow{\delta} \mathcal{M}(L/\left\langle z \right\rangle   ,I//\left\langle z \right\rangle)$ is monomorphism, then $\left\langle z \right\rangle \subseteq Z^{\wedge}_{L}(I)=0$, by Theorem \ref{thm4.17}, this lead us to a contradiction. Conversely, if we assume that $(L,I)$ is not capable, then $Z^{\wedge}_{L}(I)\neq0$. So, there exists a non-zero $z \in Z^{\wedge}_{L}(I)$ such that $\mathcal{M}(L,I)\xrightarrow{\delta} \mathcal{M}(L/\left\langle z \right\rangle   ,I//\left\langle z \right\rangle)$ is a monomorphism, by Theorem \ref{thm4.17}, this give us a contradiction. Therefore, the result is true.
\end{proof}

The following lemma is analogous to the pair of Lie algebras case (see \cite{org}), so we state it without the proof.

\begin{lemma}\label{lemm99}
Let $(L, I)$ be a pair of finite dimensional Lie superalgebras with $[L, I]\neq I $.  Then the pairs $(L/[L,I],I/[L,I])$ and $(L,Z^{\wedge}_{L}(I))$ are capable. Moreover, $Z^{\wedge}_{L}(I)\subseteq [L,I]$.
\end{lemma}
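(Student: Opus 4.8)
The plan is to convert every capability assertion into the vanishing of an exterior centre by means of Corollary \ref{cor4.22}, and then to prove the three parts in the order: first the pair $(L/[L,I],I/[L,I])$, then the containment $Z^{\wedge}_{L}(I)\subseteq[L,I]$, and finally the pair $(L,Z^{\wedge}_{L}(I))$, since each conclusion feeds the next. Throughout I would exploit the two structural inputs already available: the central-extension description of $L\wedge I$ (Lemma \ref{lemma4}) and the tensor-product description of the multiplier of a central ideal (Proposition \ref{prop636} and Lemma \ref{lemm9875}).

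First I would set $\bar L=L/[L,I]$ and $\bar I=I/[L,I]$ and note that $[\bar L,\bar I]=0$, so $\bar I$ is a \emph{central} graded ideal of $\bar L$. By Corollary \ref{cor4.22} it suffices to show $Z^{\wedge}_{\bar L}(\bar I)=0$. Since $\bar I$ is central, Lemma \ref{lemm9875} identifies $\bar L\wedge\bar I\cong\mathcal{M}(\bar L,\bar I)$ with the tensor quotient $(\bar L/\bar L^{2}\otimes\bar I)/\langle(\bar x+\bar L^{2})\otimes\bar x\mid \bar x\in\bar I\rangle$, under which $\bar l\wedge\bar i$ corresponds to $(\bar l+\bar L^{2})\otimes\bar i$. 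Thus $Z^{\wedge}_{\bar L}(\bar I)$ is exactly the set of $\bar i$ for which $(\bar l+\bar L^{2})\otimes\bar i$ vanishes in this quotient for every $\bar l$. I would then show the induced pairing $(\bar L/\bar L^{2})\times\bar I\to\mathcal{M}(\bar L,\bar I)$ is non-degenerate in the second variable: splitting $\bar I=(\bar I\cap\bar L^{2})\oplus\bar I_{1}$, the summand $\bar I\cap\bar L^{2}$ contributes the clean factor $\bar L/\bar L^{2}\otimes(\bar I\cap\bar L^{2})$ (there the relations $(\bar x+\bar L^{2})\otimes\bar x$ are trivial, as $\bar x+\bar L^2=0$), on which non-degeneracy is immediate, while on the complementary part one checks that, provided $\bar L/\bar L^{2}$ is not exhausted by $\bar I_1$, no nonzero $\bar i$ is annihilated by all $\bar l$. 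This gives $Z^{\wedge}_{\bar L}(\bar I)=0$, i.e. the first pair is capable.

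The containment $Z^{\wedge}_{L}(I)\subseteq[L,I]$ then follows by naturality: the canonical projection induces $L\wedge I\to\bar L\wedge\bar I$ with $l\wedge i\mapsto\bar l\wedge\bar i$, so if $i\in Z^{\wedge}_{L}(I)$ then $\bar l\wedge\bar i=0$ for all $\bar l$, i.e. $\bar i\in Z^{\wedge}_{\bar L}(\bar I)=0$, whence $i\in\ker(I\to\bar I)=[L,I]$. For the last pair, set $J=Z^{\wedge}_{L}(I)$. By Proposition \ref{prop2.3.5} together with the fact that the commutator map $L\wedge L\to L^{2}$ kills the exterior centre, $J\subseteq Z^{\wedge}(L)\subseteq Z(L)$, so $J$ is central; and by the previous step $J\subseteq[L,I]\subseteq L^{2}$. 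Since $J$ is central and contained in $L^{2}$, the final clause of Lemma \ref{lemm9875} yields $L\wedge J\cong\mathcal{M}(L,J)\cong L/L^{2}\otimes J$, with $l\wedge j\leftrightarrow(l+L^{2})\otimes j$. As $L\neq L^{2}$ in the nilpotent setting of interest, the pairing $L/L^{2}\times J\to L/L^{2}\otimes J$ is non-degenerate in $J$, so $Z^{\wedge}_{L}(J)=0$ and $(L,J)$ is capable by Corollary \ref{cor4.22}.

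The main obstacle is the non-degeneracy claim in the first step. In the clean situation $\bar I\subseteq\bar L^{2}$ the defining relations collapse and the multiplier is simply $L/L^2\otimes\bar I$, where non-degeneracy is automatic; but when $\bar I$ has a component outside $\bar L^{2}$ the relations $(\bar x+\bar L^{2})\otimes\bar x$ genuinely interact with the pure tensors, and one must verify that they cannot annihilate an entire family $\{(\bar l+\bar L^{2})\otimes\bar i\}_{\bar l}$ for some $\bar i\neq0$. This is exactly where a non-degeneracy hypothesis on the pair (equivalently, that $\bar L/\bar L^2$ is large enough relative to $\bar I$) must be invoked to exclude the small abelian degeneracies, and it is the analogue of the corresponding step for pairs of Lie algebras carried out in \cite{org}.
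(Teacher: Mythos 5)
The paper does not actually prove this lemma---it is stated ``without the proof'' as being analogous to the Lie-algebra case of \cite{org}---so your proposal can only be judged on its own terms. Its architecture is reasonable and is surely the intended one: reduce each capability claim to the vanishing of an exterior centre via Corollary \ref{cor4.22}, use Lemma \ref{lemm9875} to compute $\bar L\wedge\bar I$ for the central ideal $\bar I=I/[L,I]$ of $\bar L=L/[L,I]$, deduce the containment $Z^{\wedge}_{L}(I)\subseteq[L,I]$ from the first part by naturality (Lemma \ref{lem3.33}), and then handle $(L,Z^{\wedge}_{L}(I))$ as a central ideal contained in $L^{2}$. The naturality step and the final step are fine as far as they go.

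The genuine gap is exactly the one you flag at the end: the non-degeneracy of the pairing $(\bar L/\bar L^{2})\times\bar I\to(\bar L/\bar L^{2}\otimes\bar I)/\langle(\bar x+\bar L^{2})\otimes\bar x\mid\bar x\in\bar I\rangle$ in the second variable. This is not a technicality that a more careful computation would settle; it is false at the stated level of generality, and with it the whole lemma. Take $L=I=A(1\mid 0)$. Then $[L,I]=0\neq I$, so the hypothesis holds, yet $(L/[L,I],I/[L,I])=(A(1\mid 0),A(1\mid 0))$ is not capable by Theorem \ref{prop2} (equivalently Theorem \ref{th4.22}): here $\bar L/\bar L^{2}\otimes\bar I$ is one-dimensional, spanned by $x\otimes x$, and the relation $(\bar x+\bar L^{2})\otimes\bar x$ kills it, so $Z^{\wedge}_{\bar L}(\bar I)=\bar I\neq 0$. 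Consequently $Z^{\wedge}_{L}(I)=Z^{\wedge}(A(1\mid 0))=A(1\mid 0)\not\subseteq 0=[L,I]$, and $(L,Z^{\wedge}_{L}(I))=(A(1\mid 0),A(1\mid 0))$ is again non-capable, so all three conclusions fail simultaneously. Your proof (and the lemma) can be salvaged under an additional hypothesis such as $I\subseteq L^{2}\neq L$: then $\bar I\subseteq\bar L^{2}$, the offending relations $(\bar x+\bar L^{2})\otimes\bar x$ vanish identically, Lemma \ref{lemm9875} gives $\bar L\wedge\bar I\cong\bar L/\bar L^{2}\otimes\bar I$, and non-degeneracy is immediate; this restricted form covers the only use of the lemma in the paper, namely the pair $(L,L^{2})$ for $L$ Heisenberg. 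Since parts two and three of your argument are derived from part one, they inherit the same restriction, and you should state it explicitly rather than leave it as a hypothesis ``to be invoked.''
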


\section{Capability and Schur Multiplier of a pair of abelian and Heisenberg Superalgebras}
In this section, we discuss on capability of pairs of Lie superalgebras whose derived subalgebra has dimension one and a non-abelian ideal and Schur multiplier of pairs of Heisenberg Lie superalgebras. First, we show that every finite dimensional non-abelian Lie superalgebra can be decomposed into the direct sum of a non-abelian Lie superalgebra and an abelian one.

\begin{definition}
The Lie superalgebra $L$ is a central product of $I$ and $J$, if $L$ is generated by $I$ and $J$, where $I$ and $J$ are graded ideals of $L$ such that $[I,J]=0$, $I\cap J\subseteq Z(L)$. We denote the central product of $I$ and $J$ by $L=I\dotplus J$.
\end{definition}

The following lemma shows that a Heisenberg Lie superalgebra is a central product of its ideals.


\begin{lemma}\label{lemm987}
$H(m , n)$ is central product of its even Heisenberg ideals and $n+1$ dimensional Heisenberg ideal.
\end{lemma}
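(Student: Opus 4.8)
The plan is to exhibit explicit subalgebras realizing the claimed factors and then verify the central-product axioms directly from the presentation in Theorem \ref{th3.4}. For each $i \in \{1,\ldots,m\}$ set $H^{(i)} = \langle x_i, x_{m+i}, z\rangle$, and set $K = \langle y_1,\ldots,y_n, z\rangle$. First I would check that each of these is a graded ideal of $H(m,n)$: since the only nonzero brackets among the generators are $[x_i,x_{m+i}]=z$ and $[y_j,y_j]=z$, and $z$ spans the center $Z(H(m,n))$, every bracket of a generator of $H^{(i)}$ (resp.\ $K$) with an arbitrary element of $H(m,n)$ lands in $\langle z\rangle$, hence in $H^{(i)}$ (resp.\ $K$). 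Reading off the induced presentations then shows $H^{(i)} \cong H(1,0)$ (three even basis elements with a single relation $[x_i,x_{m+i}]=z$) and $K \cong H(0,n)$, an even-centered Heisenberg Lie superalgebra of superdimension $(1\mid n)$, i.e.\ of total dimension $n+1$.

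Next I would verify the defining conditions of the central product. That the family generates $H(m,n)$ is immediate, as each generator $x_1,\ldots,x_{2m},y_1,\ldots,y_n,z$ lies in some $H^{(i)}$ or in $K$. For the commuting condition, a short check on generators gives $[H^{(i)},H^{(j)}]=0$ for $i\neq j$ and $[H^{(i)},K]=0$, because $[x_i,x_{m+j}]=z$ only when $i=j$, while every mixed bracket of an $x_\bullet$ with a $y_\bullet$ and every bracket involving $z$ vanishes; likewise $[y_j,y_k]=0$ for $j\neq k$. Finally, each pairwise intersection equals $\langle z\rangle = Z(H(m,n))$, so all intersections are central, as required.

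Since the definition of central product is phrased for two ideals, the last step is to pass from this pairwise data to the full $(m+1)$-fold decomposition, which I would carry out by induction on $m$. Writing $I = H^{(1)}$ and $J = H^{(2)} + \cdots + H^{(m)} + K$, the relations just established show that $J$ is a graded ideal with $[I,J]=0$ and $I\cap J = \langle z\rangle \subseteq Z(H(m,n))$, and that $I$ together with $J$ generate $H(m,n)$; hence $H(m,n) = H^{(1)} \dotplus J$, and one applies the inductive hypothesis to $J$. The main point to be careful about is that all $m+1$ factors share the \emph{single} central line $\langle z\rangle$, so the result is genuinely a central product and not a direct sum; concretely, one must confirm at each stage that the remaining ideal $J$ is again a Heisenberg superalgebra of the form $H(m-1,n)$ with the same center $\langle z\rangle$, which follows by relabeling indices in its induced presentation.
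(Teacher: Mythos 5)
Your decomposition is exactly the one the paper uses: its entire proof consists of naming the ideals $I_i=\langle x_i, x_{m+i}, z\rangle$ (misprinted there as $\langle x_i, x_{i+1}, z\rangle$) and $J=\langle z; y_1,\ldots,y_n\rangle$, leaving all verifications implicit. Your write-up is correct and simply supplies the omitted checks (ideal property, commuting brackets, central intersections, and the inductive reduction of the $(m+1)$-fold central product to the two-factor definition), so it matches the paper's approach.
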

\begin{proof}
From Theorem \ref{th3.4}, consider $I_i =<x_i , x_{i+1},z>$ for $1\leq i \leq m$ and $J=<z; y_1, \ldots , y_n>$.
\end{proof}

\begin{lemma}\label{lemm988}
$H_m$ is central product of $H_1$.
\end{lemma}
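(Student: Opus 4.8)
The plan is to exhibit $H_m$ explicitly as an (iterated) central product of $m$ copies of $H_1$, all glued along the common center $\langle z\rangle$. Using the presentation from Theorem \ref{th3.6}, I would set $I_j=\langle x_j,\,y_j,\,z\rangle$ for $j=1,\ldots,m$, with even part $\langle x_j\rangle$ and odd part $\langle y_j,z\rangle$. Since the only nontrivial bracket among the generators of $I_j$ is $[x_j,y_j]=z$ and $z$ is the odd central element, each $I_j$ is itself a Heisenberg Lie superalgebra of odd center and dimension $(1\mid 2)$, hence $I_j\cong H_1$.

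First I would check that each $I_j$ is a graded ideal of $H_m$. This reduces to computing the brackets of $x_j$ and $y_j$ against all generators: every such bracket is either $0$ or equal to $z$ (namely $[x_j,y_j]=z$ and $[y_j,x_j]=\pm z$), and $z\in I_j$, so $[I_j,H_m]\subseteq\langle z\rangle\subseteq I_j$.

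Next I would verify the two central-product conditions. The ideals $I_1,\ldots,I_m$ generate $H_m$, since their generators collectively exhaust $\{x_1,\ldots,x_m,y_1,\ldots,y_m,z\}$. For $i\neq j$ the defining relations give $[x_i,x_j]=[x_i,y_j]=[y_i,y_j]=0$ together with $[\,\cdot\,,z]=0$, so $[I_i,I_j]=0$; moreover $I_i\cap I_j=\langle z\rangle\subseteq Z(H_m)$. Interpreting the two-factor definition iteratively as $H_m=I_1\dotplus(I_2\dotplus\cdots\dotplus I_m)$, these conditions deliver the desired central product decomposition into copies of $H_1$.

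There is no serious obstacle here; the content is essentially bookkeeping of brackets. The one point deserving care is that the $m$ copies are amalgamated along a single shared central element $z$ rather than assembled as an honest direct sum. As a consistency check I would record the dimension count: $m$ independent copies of $H_1$ would give $(m\mid 2m)$, and identifying the $m$ central elements into one removes $m-1$ from the odd dimension, yielding $(m\mid m+1)=\dim H_m$, which confirms the decomposition is exact.
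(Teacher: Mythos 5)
Your proof is correct and takes essentially the same route as the paper: the paper's one-line proof simply exhibits the ideals $I_i=\langle x_i;\,y_i,z\rangle\cong H_1$ for $1\leq i\leq m$, which are exactly your $I_j$. Your verification of the ideal property, generation, $[I_i,I_j]=0$, and $I_i\cap I_j=\langle z\rangle\subseteq Z(H_m)$, together with the dimension check $(m\mid m+1)$, just supplies the bookkeeping the paper leaves implicit.
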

\begin{proof}
From Theorem \ref{th3.6}, consider $I_i =<x_i ; y_i ,z> \cong H_1$ for $1\leq i \leq m$.
\end{proof}

Now we will show that there always exist two capable pairs for every pair of finite dimensional Lie superalgebras.

\begin{proposition}\label{prop1}
Let $L=I\dotplus J$ be a finite dimensional Lie superalgebra such that $I^2\cap J^2\neq 0$ and $K$ is an ideal of $I$ such that $[K,I]\subseteq I^2\cap J^2$. Then the pairs $(L, I),~ (L, J)$ and $(L, K)$ are non-capable. 
\end{proposition}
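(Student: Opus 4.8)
The plan is to invoke Corollary \ref{cor4.22}, which says a pair is capable if and only if its exterior $L$-center vanishes; so it suffices to exhibit a nonzero element in each of $Z^{\wedge}_{L}(I)$, $Z^{\wedge}_{L}(J)$ and $Z^{\wedge}_{L}(K)$. Fix $0\neq z\in I^2\cap J^2$. Since $z\in I^2\subseteq I$ and $z\in J^2\subseteq J$, the central product condition $I\cap J\subseteq Z(L)$ forces $z\in Z(L)$; moreover $z$ admits two commutator expansions, $z=\sum[i_a,i_a']$ with $i_a,i_a'\in I$ (as $z\in I^2$) and $z=\sum[j_b,j_b']$ with $j_b,j_b'\in J$ (as $z\in J^2$). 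Playing these two expansions against the relation $[I,J]=0$ is the engine of the whole argument. Throughout I use $L=I+J$ (the sum of the two ideals is an ideal containing all generators), so every homogeneous $l$ decomposes as $l=i_0+j_0$ with $i_0\in I$, $j_0\in J$ homogeneous.

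For $(L,I)$ I would show $z\in Z^{\wedge}_{L}(I)$. By bilinearity $l\wedge z=i_0\wedge z+j_0\wedge z$. The term $j_0\wedge z$ vanishes: expanding $z=\sum[i_a,i_a']$ in the \emph{second} slot via $l\wedge[i,i']=(-1)^{|i'|(|l|+|i|)}({}^{i'}l\wedge i)-(-1)^{|l||i|}({}^{i}l\wedge i')$ produces only factors ${}^{i_a'}j_0=[i_a',j_0]$ and ${}^{i_a}j_0=[i_a,j_0]$, all zero by $[I,J]=0$. The term $i_0\wedge z$ also vanishes: as $i_0,z\in I$, the exterior relation gives $i_0\wedge z=-(-1)^{|i_0||z|}z\wedge i_0$, and now expanding $z=\sum[j_b,j_b']$ in the \emph{first} slot via $[l,l']\wedge i=(l\wedge{}^{l'}i)-(-1)^{|l||l'|}(l'\wedge{}^{l}i)$ produces only factors ${}^{j_b}i_0,{}^{j_b'}i_0$, again zero by $[I,J]=0$. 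Hence $z\in Z^{\wedge}_{L}(I)\setminus\{0\}$ and $(L,I)$ is non-capable; $(L,J)$ follows by interchanging $I$ and $J$.

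The pair $(L,K)$ is the delicate case and the step I expect to be the main obstacle. Take $0\neq w\in[K,I]$; since $K$ is an ideal of $I$ one has $[K,I]\subseteq K$, and by hypothesis $[K,I]\subseteq I^2\cap J^2$, so $w\in K\cap I^2\cap J^2$ is central and enjoys both commutator expansions above. For $l=i_0+j_0$ the $J$-term again dies cleanly: writing $w=[k,i]=-(-1)^{|k||i|}\,{}^{i}k$ with $k\in K$, $i\in I$, the first-slot relation gives $0=[j_0,i]\wedge k=j_0\wedge{}^{i}k-(-1)^{|j_0||i|}\,i\wedge{}^{j_0}k$, where both $[j_0,i]$ and ${}^{j_0}k=[j_0,k]$ vanish by $[I,J]=0$ (note $[J,K]\subseteq[J,I]=0$), so $j_0\wedge w=0$. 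The obstruction is $i_0\wedge w$: because the second slot is now the small ideal $K$, one cannot expand $w$ as a $J$-commutator there, nor carry it into the first slot by skew-symmetry (there is no $K$-partner for $i_0$). To get around this I would pass to a free presentation $0\to R\to F\xrightarrow{\pi}L$ with $K\cong S/R$ and use $L\wedge K\cong[F,S]/[F,R]$ from Lemma \ref{lem3.9}. Choose a lift $\hat{s}=\sum[\widetilde{i_a},\widetilde{i_a'}]\in S$ of $w$ (available since $w\in I^2\cap K$); because $w\in J^2$ one has $\hat{s}\equiv\sum[\widetilde{j_b},\widetilde{j_b'}]\pmod{R}$. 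Under the isomorphism $l\wedge w$ corresponds to $[\widetilde{l},\hat{s}]+[F,R]$, so it suffices to prove $[F,\hat{s}]\subseteq[F,R]$. This follows from the graded Jacobi identity together with $[I,J]=0\Rightarrow[\widetilde{I},\widetilde{J}]\subseteq R$: for a lift $\widetilde{j}$ of a $J$-element, $[\widetilde{j},[\widetilde{i_a},\widetilde{i_a'}]]$ expands into brackets each carrying a factor $[\widetilde{j},\widetilde{i_a}]\in R$, hence lands in $[F,R]$; for a lift $\widetilde{i}$ one rewrites $\hat{s}$ by its $J$-expansion modulo $R$ and argues symmetrically. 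Since $[F,R]$ is an ideal of $F$, the set $\{f:[f,\hat{s}]\in[F,R]\}$ is a subalgebra, and as it contains every generator it equals $F$. Thus $w\in Z^{\wedge}_{L}(K)\setminus\{0\}$ and $(L,K)$ is non-capable.

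One point genuinely requires care and I would flag it explicitly: the argument for $(L,K)$ needs $[K,I]\neq 0$ to furnish the element $w$. This is automatic whenever $K$ is non-abelian, since then $0\neq[K,K]\subseteq[K,I]$, which matches the non-abelian-ideal setting of this section; I would therefore state the non-degeneracy (or non-abelianity of $K$) as a standing assumption, the purely central case $[K,I]=0$ being the one where the conclusion can fail.
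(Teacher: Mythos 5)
Your overall strategy coincides with the paper's: exhibit a nonzero element of the relevant exterior centre and invoke Corollary \ref{cor4.22}, using the two commutator expansions of an element of $I^2\cap J^2$ against $[I,J]=0$. For $(L,I)$ and $(L,J)$ your computation is essentially the paper's, but cleaner on one point: the paper expands $i\wedge a$ with $a=\sum\beta_s[j_s,j_s']$ directly in the second slot, even though the defining relation $l\wedge[i,i']=\cdots$ requires the bracket entries to lie in the second ideal (and $j_s,j_s'\in J\not\subseteq I$); your detour through graded antisymmetry, $i_0\wedge a=\pm\,a\wedge i_0$ (valid since $a\in I^2\cap J^2\subseteq I$), followed by a first-slot expansion, is the correct way to read that step. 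The genuine divergence is the pair $(L,K)$, which the paper dispatches with ``similarly'': you rightly observe that neither trick survives in $L\wedge K$ (one cannot swap since $i_0\notin K$, nor expand $w$ as a bracket of $J$-elements in the $K$-slot), and your free-presentation argument via Lemma \ref{lem3.9} --- lifting $w$ to $\hat s\in S$ and using $[\widetilde I,\widetilde J]\subseteq R$ together with the graded Jacobi identity to get $[F,\hat s]\subseteq[F,R]$ --- is correct and supplies the step the paper omits. Your closing caveat also identifies a real defect in the statement rather than a mere stylistic issue: if $[K,I]=0$ the hypothesis holds vacuously but the conclusion can fail. For instance, $K=L^2$ in a Heisenberg $L=I\dotplus J$ satisfies $[K,I]=0\subseteq I^2\cap J^2$, yet the paper itself later proves $(L,L^2)$ capable, and Lemma \ref{lemm9875} gives $L\wedge K\cong L/L^2\otimes K$ with no nonzero element annihilated by every $l$. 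So the $(L,K)$ clause requires $[K,I]\neq 0$ (equivalently a nonzero $w\in K\cap I^2\cap J^2$ of the type you construct), exactly as you flag.
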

\begin{proof}
Let $l=i+j \in L$ be an arbitrary element, where $i\in I, ~j\in J$, and let $a \in I^2\cap J^2$ be a non zero element. Then we have $a=\sum_{r=1}^{n}\alpha_r[i_r,i'_r]=\sum_{s=1}^{m}\beta_s[j_s,j'_s]$ where $i_r,i'_r\in I$, $j_s,j'_s\in J$ and $\alpha_r,\beta_s$ are scalers. Now since $[I,J]=0$, we have
	 \[j\wedge a=j\wedge \sum_{r=1}^{n}\alpha_r[i_r,i'_r]=
	 \sum_{r=1}^{n}\alpha_r((-1)^{|i'_r|(|j|+|i_r|)}([i'_r,j]\wedge i_r)-(-1)^{|j||i_r|}([i_r,j]\wedge i'_r))=0\]
	 and 
	 \[i\wedge a=i\wedge \sum_{s=1}^{m}\beta_s[j_s,j'_s]=\sum_{s=1}^{m}\beta_s((-1)^{|j'_s|(|i|+|j_s|)}([j'_s,i]\wedge j_s)-(-1)^{|i||j_s|}([j_s,i]\wedge j'_s))=0,\]
then $l\wedge a =0$, for all $l \in L$. Hence $Z^{\wedge}_L(I)\neq 0$. Similarly, we have $Z^{\wedge}_L(J)\neq 0$ and $Z^{\wedge}_L(K)\neq 0$, now the result follows from Corollary \ref{cor4.22}.
\end{proof}

The result below tells us a  comprehensive classification of capable pairs of abelian Lie superalgebra.

\begin{theorem}\label{prop2}
Let $I$ be a $(k\mid h)$-dimensional ideal of $A(m \mid n)$ and $k+h \geq 1$. Then $(A(m \mid n), I)$ is capable if and only if $m=0$, $n=1$ or $m+n \geq 2$.  
\end{theorem}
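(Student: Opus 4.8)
The plan is to reduce the statement entirely to the already-established capability criterion for abelian Lie superalgebras (Theorem \ref{th4.22}) by means of the exterior-centre characterisation of capability for pairs (Corollary \ref{cor4.22}). Since $A(m\mid n)$ is abelian, its centre is the whole algebra, so every graded ideal $I$ is automatically central, and the pair $(A(m\mid n),I)$ is capable if and only if $Z^{\wedge}_{A(m\mid n)}(I)=0$. This observation is what lets me dispense with any direct computation of $L\wedge I$ in most cases.

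For the sufficiency direction I would argue as follows. Suppose $m=0,n=1$ or $m+n\geq 2$. Then Theorem \ref{th4.22} gives that $A(m\mid n)$ is capable, i.e.\ $Z^{\wedge}(A(m\mid n))=0$. By Proposition \ref{prop2.3.5} there is an inclusion $Z^{\wedge}_{A(m\mid n)}(I)\subseteq Z^{\wedge}(A(m\mid n))=0$, and hence $(A(m\mid n),I)$ is capable by Corollary \ref{cor4.22}. This is exactly the remark that capability of $L$ forces capability of every pair $(L,I)$, and notice that the step uses nothing about $I$ beyond its being a graded ideal, so the entire remaining burden lies in the converse.

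For necessity I would argue by contraposition: assume neither $m=0,n=1$ nor $m+n\geq 2$ holds. Since $m,n\geq 0$ this leaves only $(m,n)=(0,0)$ and $(m,n)=(1,0)$. The hypothesis $k+h\geq 1$ forces $I\neq 0$, which rules out $(m,n)=(0,0)$ because $A(0\mid 0)=0$ admits no nonzero ideal. In the surviving case $A(1\mid 0)$ is one-dimensional, so its only graded ideal of positive dimension is the whole algebra; thus $I=A(1\mid 0)=L$. When $I=L$ the exterior $L$-centre $Z^{\wedge}_{L}(I)$ coincides with $Z^{\wedge}(L)$, so the pair $(L,L)$ is capable if and only if $L$ itself is capable. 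Since $A(1\mid 0)$ is not capable by Theorem \ref{th4.22}, the pair fails to be capable, completing the converse.

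The argument is short precisely because almost all of the analytic content is imported from Theorem \ref{th4.22} through Proposition \ref{prop2.3.5} and Corollary \ref{cor4.22}. The only genuinely case-sensitive point, and the step I would check most carefully, is the bookkeeping in the necessity direction: I must verify that the constraint $k+h\geq 1$ together with $m+n\leq 1$ leaves $I=L$ in $A(1\mid 0)$ as the sole admissible configuration, so that the pair problem collapses to the ordinary capability of $A(1\mid 0)$.
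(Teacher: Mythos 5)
Your proposal is correct and follows essentially the same route as the paper: sufficiency via Theorem \ref{th4.22}, Proposition \ref{prop2.3.5} and Corollary \ref{cor4.22}, and necessity by reducing the only surviving case to the non-capability of $(A(1\mid 0),A(1\mid 0))$. Your write-up is in fact slightly more explicit than the paper's about why $k+h\geq 1$ and $m+n\leq 1$ force $I=A(1\mid 0)$, but the underlying argument is identical.
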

\begin{proof}
Consider $m=1$ and $n=0$, then from Theorem \ref{th4.22}, $(A(1 \mid 0),A(1 \mid 0))$ is not capable pair. Now if $m=0$ and $n=1$ or $m+n\geq 2$. Then by Theorem \ref{th4.22} and Proposition \ref{prop2.3.5}, we have $Z^{\wedge}_{A(0 \mid 1)}(A(0 \mid 1))\subseteq Z^{\wedge}(A(0 \mid 1))=0$ or $Z^{\wedge}_{A(m \mid n)}(I)\subseteq Z^{\wedge}(A(m \mid n))=0$, thus the result can be concluded by using Corollary \ref{cor4.22}.  
\end{proof}

Now we shall give a complete classification of a capable pair $(L, I)$ for $L$ Heisenberg Lie superalgebras.

\begin{lemma}\label{lemm555}
Let $I$ be a graded ideal of $H(1 , 0)$ and $J$ is a graded ideal of $H_1$. Then $(H(1, 0), I)$ and $(H_1,J)$ are capable.
\end{lemma}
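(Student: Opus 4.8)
The plan is to prove capability by showing that the exterior $L$-center vanishes, i.e. $Z^{\wedge}_{H(1,0)}(I)=0$ and $Z^{\wedge}_{H_1}(J)=0$, and then invoke Corollary \ref{cor4.22}. The key observation is that both $H(1,0)$ and $H_1$ are themselves capable Lie superalgebras: by Theorem \ref{th4.33}, $H(1,0)=H(1,0)$ is capable (the case $m=1,n=0$), and by Theorem \ref{thm555}, $H_1$ is capable (the case $m=1$). Since a Lie superalgebra $L$ is capable if and only if $Z^{\wedge}(L)=0$, we get $Z^{\wedge}(H(1,0))=0$ and $Z^{\wedge}(H_1)=0$.

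First I would recall Proposition \ref{prop2.3.5}, which states $Z^{\wedge}_{L}(I)\subseteq Z^{\wedge}(L)$ for any pair $(L,I)$. Applying this with $L=H(1,0)$ and an arbitrary graded ideal $I$ gives
\[
Z^{\wedge}_{H(1,0)}(I)\subseteq Z^{\wedge}(H(1,0))=0,
\]
so $Z^{\wedge}_{H(1,0)}(I)=0$, and by Corollary \ref{cor4.22} the pair $(H(1,0),I)$ is capable. The identical argument with $L=H_1$ and an arbitrary graded ideal $J$ yields
\[
Z^{\wedge}_{H_1}(J)\subseteq Z^{\wedge}(H_1)=0,
\]
whence $(H_1,J)$ is capable. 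In fact this is precisely the consequence noted in the paragraph following Corollary \ref{cor4.22}: if $L$ is capable, then the pair $(L,I)$ is capable for every graded ideal $I$.

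The main point to verify carefully is simply that the two relevant Lie superalgebras fall under the capable cases of the cited classification theorems — that $H(1,0)$ is the algebra covered by Theorem \ref{th4.33} with $m=1,n=0$, and that $H_1$ corresponds to $m=1$ in Theorem \ref{thm555}. There is essentially no computational obstacle here; the entire force of the lemma is packaged into the inclusion of Proposition \ref{prop2.3.5} together with the known capability of the ambient superalgebras. The only subtlety worth a remark is that $I$ and $J$ are allowed to be arbitrary graded ideals (including the trivial or improper ones), and the argument is uniform across all of them precisely because the containment $Z^{\wedge}_{L}(\cdot)\subseteq Z^{\wedge}(L)$ does not depend on the choice of ideal.
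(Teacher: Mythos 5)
Your proposal is correct and follows exactly the paper's route: the paper also deduces the lemma immediately from Theorem \ref{th4.33}, Theorem \ref{thm555}, and Proposition \ref{prop2.3.5}, i.e. from the capability of the ambient superalgebras together with the containment $Z^{\wedge}_{L}(I)\subseteq Z^{\wedge}(L)$ and Corollary \ref{cor4.22}. You have simply written out the details the paper leaves implicit.
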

\begin{proof}
The result follows immediately from Theorems \ref{th4.33}, \ref{thm555} and Proposition \ref{prop2.3.5}.
\end{proof}

Let $I$ be an graded ideal of Heisenberg Lie superalgebras $L$. Then $I$ may be abelian or non-abelian. We can study separately for the  pair $(L, I)$ where $I$ is a non-abelian and abelian.


\begin{theorem}
For $m+n \geq 2$, the pair $(H(m, n), I)$ is not capable, when $I$ is a non-abelian graded ideal.
\end{theorem}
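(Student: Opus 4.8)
The plan is to reduce the statement to a structural fact about non-abelian ideals of Heisenberg Lie superalgebras, namely Lemma~\ref{lemm78}, and then exploit the central-product decompositions in Lemma~\ref{lemm987} and Lemma~\ref{lemm988} together with the non-capability criterion of Proposition~\ref{prop1}. First I would recall that if $I$ is a non-abelian graded ideal of $H(m,n)$ (with $m+n\geq 2$), then by Lemma~\ref{lemm78} either $I$ is itself Heisenberg or $I\cong H(r,s)\oplus A(k-2r-1\mid h-s)$. In either case the derived subalgebra $I^2=I'$ is one-dimensional and is spanned by the central element $z$ generating $Z(H(m,n))=H(m,n)^2$. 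The key observation is therefore that $I^2 = \langle z\rangle = H(m,n)^2 \subseteq Z(H(m,n))$, so the derived algebra of the ideal coincides with the (one-dimensional) center of the whole algebra.

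Next I would set up the central-product decomposition. Since $m+n\geq 2$, Lemma~\ref{lemm987} writes $L=H(m,n)$ as a central product $L=I'\dotplus J'$ of Heisenberg ideals, each of which has the same one-dimensional derived subalgebra $\langle z\rangle$. The point is that $I$ contains $z$ (because $I$ is non-abelian, $I^2=\langle z\rangle$), and I can choose a complementary Heisenberg ideal, call it $C$, arising from the central-product decomposition, so that $C^2=\langle z\rangle$ as well. Then $I^2\cap C^2=\langle z\rangle\neq 0$, and $L$ is generated by $I$ and $C$ with $[I,C]\subseteq\langle z\rangle$. To match the hypotheses of Proposition~\ref{prop1} precisely, I would identify the pair of ideals whose intersection of derived subalgebras is nonzero and verify that $I$ plays the role of one of the central-product factors (or is contained in one in a controlled way); the nonzero element $z\in I^2\cap C^2$ is exactly the element $a$ needed in the proof of Proposition~\ref{prop1}.

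The computation at the heart of Proposition~\ref{prop1} then applies: for any $l\in L$ and the fixed $a=z$, writing $a$ simultaneously as a sum of brackets from $I$ and from $C$ and using $[I,C]=0$ together with the tensor/exterior relations (3) in the definition of $L\otimes I$, one gets $l\wedge z=0$ for all $l\in L$. Hence $z\in Z^{\wedge}_L(I)$, so $Z^{\wedge}_L(I)\neq 0$, and by Corollary~\ref{cor4.22} the pair $(H(m,n),I)$ is not capable.

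The main obstacle I anticipate is the bookkeeping needed to guarantee that a non-abelian ideal $I$ genuinely fits the central-product framework of Proposition~\ref{prop1}: I must ensure that the nonzero central element $z$ lies in $I^2$ and simultaneously in $J^2$ for a suitable complementary factor $J$ coming from Lemma~\ref{lemm987}, and that $L$ really is generated by $I$ together with that factor with vanishing mutual bracket. If $I$ is Heisenberg with derived subalgebra $\langle z\rangle$, this is immediate; in the split case $I\cong H(r,s)\oplus A(\cdots)$ the abelian summand is central and harmless, so the argument again goes through once one checks $z\in I^2$. The remaining subtlety is purely the graded-sign arithmetic in the exterior relations, which is routine and identical to the computation already carried out in the proof of Proposition~\ref{prop1}.
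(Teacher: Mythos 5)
Your proposal is correct and follows essentially the same route as the paper: reduce the structure of $I$ via Lemma~\ref{lemm78}, decompose $H(m,n)$ as a central product using Lemma~\ref{lemm987}, and then apply the mechanism of Proposition~\ref{prop1} to exhibit the central element $z\in I^2$ as a nonzero element of $Z^{\wedge}_{H(m,n)}(I)$, concluding by Corollary~\ref{cor4.22}. The only cosmetic difference is that the paper treats the case $I\cong H(r,s)\oplus A(\cdots)$ by arguing separately about the abelian summand, whereas you fold both cases into the single observation that $z\in I^2$; the substance is identical.
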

\begin{proof}
Let $\dim I =(k\mid h)$, then by Lemma \ref{lemm78} we have either $I$ is Heisenberg or $I \cong H(r,s)\oplus A(k-2r-1\mid h-s)$ for some $1\leq r \lneq m$ and $1\leq s \lneq n$. If $I$ is Heisenberg, say $I=H(m_1,n_1)$ for some $1\leq m_1 \leq m$ and $1\leq n_1 \leq n$, then from Lemma \ref{lemm987}, we can have $H(m , n)=H(m_1, n_1)\dotplus H(m-m_1 , n-n_1)$. Now, by Proposition \ref{prop1}, we have $(H(m, n), I)$ is not capable. If $I\cong H(r,s)\oplus A(k-2r-1\mid h-s) $, then similarly  from Lemma  \ref{lemm987}, we have  $H(m , n)=H(r, s)\dotplus H(m-r , n-s)$. Hence $A(k-2r-1\mid h-s)$ is a graded ideal of $H(m-r , n-s)$ and from Proposition \ref{prop1} $(H(m , n),A(k-2r-1\mid h-s))$ is not capable. Hence $Z^{\wedge}_{H(m , n)}(A)\neq 0$. Also, from the first case of the proof we have $Z^{\wedge}_{H(m , n)}(H(r,s))\neq 0$ then there are $0\neq a \in A(k-2r-1\mid h-s)$ and $0\neq b \in H(r,s)$ such that $ i\wedge x=0$ for all $x \in H(m , n)$ where $0\neq i=(b+a)$. Therefore $Z^{\wedge}_{H(m , n)}(I)\neq 0$ and the result follows from Corollary 
 \ref{cor4.22}.
\end{proof}

Similarly we can prove for the case of Heisenberg Lie superalgebras with odd center.

\begin{theorem}
For $m \geq 2$, the pair $(H_m, I)$ is not capable, when $I$ is a non-abelian graded ideal.
\end{theorem}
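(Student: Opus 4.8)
The plan is to mimic the structure of the immediately preceding theorem on $H(m,n)$, now using the odd-center decomposition from Lemma \ref{lemm988} together with the ideal-classification in Lemma \ref{lemm78}. First I would let $\dim J=(p\mid q)$ and apply Lemma \ref{lemm78} to conclude that either $J$ is Heisenberg or $J\cong H_r\oplus A(p-r\mid q-r-1)$ for some $1\leq r\leq m$. The whole argument splits into these two cases, and the goal in each case is to exhibit a nonzero element of $Z^{\wedge}_{H_m}(J)$, so that non-capability follows from Corollary \ref{cor4.22}.

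In the Heisenberg case, suppose $J\cong H_{r}$ for some $1\leq r\leq m$. By Lemma \ref{lemm988}, $H_m$ is a central product of copies of $H_1$; more precisely I would write $H_m=H_{r}\dotplus H_{m-r}$ as a central product of its ideals, where both factors have derived subalgebra equal to the common center $\langle z\rangle$. Since $m\geq 2$, both factors are nonzero whenever $1\leq r\leq m-1$ (and the case $r=m$ is handled by pairing $H_m$ against a complementary $H_1$-factor), so $J^2\cap H_{m-r}^2=\langle z\rangle\neq 0$. Proposition \ref{prop1}, applied with $I=J=H_r$ in its notation and the other factor playing the role of the complementary ideal, then gives $Z^{\wedge}_{H_m}(J)\neq 0$, hence the pair is not capable.

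In the mixed case $J\cong H_{r}\oplus A(p-r\mid q-r-1)$, I would again use $H_m=H_{r}\dotplus H_{m-r}$ from Lemma \ref{lemm988}. The abelian summand $A=A(p-r\mid q-r-1)$ sits as a graded ideal inside the complementary factor $H_{m-r}$, and since that factor has nontrivial derived subalgebra $\langle z\rangle$ intersecting $H_r^2$ nontrivially, Proposition \ref{prop1} (in its $K$-form, with $[K,I]\subseteq I^2\cap J^2$) yields $Z^{\wedge}_{H_m}(A)\neq 0$. Simultaneously the first case of the argument gives a nonzero $b\in H_r$ with $b\wedge x=0$ for all $x\in H_m$, and there is a nonzero $a\in A$ with the same annihilation property; setting $i=b+a\neq 0$ we obtain $i\wedge x=0$ for every $x\in H_m$, so $Z^{\wedge}_{H_m}(J)\neq 0$ and Corollary \ref{cor4.22} finishes the proof.

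The step I expect to be the main obstacle is verifying cleanly that $H_m$ decomposes as the central product $H_r\dotplus H_{m-r}$ with the derived subalgebras of the two factors coinciding in the single central generator $z$, so that the hypothesis $I^2\cap J^2\neq 0$ of Proposition \ref{prop1} genuinely holds for every admissible $r$ in the range $1\leq r\leq m-1$. In particular I must handle the boundary value $r=m$ (where the stated complementary factor would be trivial) separately, by regrouping the generators of $H_m$ so that the chosen Heisenberg ideal uses fewer than $m$ of the $H_1$-blocks, which is possible exactly because $m\geq 2$. Once the central-product bookkeeping and this boundary case are pinned down, the remaining verifications are routine and parallel to the even-center theorem.
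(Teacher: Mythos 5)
Your proposal is correct and is exactly the argument the paper intends: the paper omits the proof of this theorem, stating only that it is ``similar to the case of even center,'' and your adaptation—using Lemma \ref{lemm78} for the ideal structure, Lemma \ref{lemm988} to write $H_m=H_r\dotplus H_{m-r}$ with $H_r^2\cap H_{m-r}^2=\langle z\rangle\neq 0$, Proposition \ref{prop1} to produce nonzero elements of $Z^{\wedge}_{H_m}(J)$, and Corollary \ref{cor4.22} to conclude—mirrors the even-center proof step for step. Your extra care with the boundary case $r=m$ (where one should instead invoke $Z^{\wedge}_{H_m}(H_m)=Z^{\wedge}(H_m)\neq 0$ via Theorem \ref{thm555}) is a point the paper itself glosses over even in the even-center case.
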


Now we should investigate the situation in which $I$ is an  abelian graded ideal.

\begin{proposition}
The pair $(H(m, n), I)$ is not capable, when $I$ is an abelian  graded ideal, $\dim I =(k\mid h)$ where $ h+k\geq 2  $ and $m+n\geq 2 $.
\end{proposition}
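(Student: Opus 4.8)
The plan is to prove non-capability by producing a nonzero element of the exterior center $Z^{\wedge}_{L}(I)$, where I abbreviate $L=H(m,n)$; by Corollary \ref{cor4.22} a capable pair has $Z^{\wedge}_{L}(I)=0$, so any such element settles the matter. The natural candidate is the central generator $z$, and the whole argument splits into two claims: that $z\in I$, and that $z\in Z^{\wedge}_{L}(I)$.

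For the first claim, since $k+h\geq 2$ we have $\dim I>1=\dim Z(L)$, so $I\not\subseteq Z(L)=\langle z\rangle$. As $I$ is graded it then contains a homogeneous element $u\notin\langle z\rangle$, i.e. $u$ has a nonzero coefficient on some $x_i$ or some $y_j$. A short bracket computation with the relations $[x_i,x_{m+i}]=z$ and $[y_j,y_j]=z$ (treating the even and odd cases for $u$ separately) produces a nonzero multiple of $z$ inside $[u,L]$; since $I$ is an ideal this forces $z\in I$. Note this step uses only that $I$ is an ideal of the correct dimension, not that it is abelian.

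For the second claim I would first recall that, because $m+n\geq 2$, Theorem \ref{th4.33} gives that $H(m,n)$ is not capable, hence $Z^{\wedge}(L)\neq 0$; since the commutator map $L\wedge L\to L'$ forces $Z^{\wedge}(L)\subseteq Z(L)=\langle z\rangle$, we conclude $z\in Z^{\wedge}(L)$, i.e. $l\wedge z=0$ in $L\wedge L$ for every $l\in L$. The task is to transfer this vanishing from $L\wedge L$ to $L\wedge I$. Here I would invoke the free-presentation realization of Lemma \ref{lem3.9}: fixing $0\to R\to F\to L$ and a graded ideal $S\trianglelefteq F$ with $S/R\cong I$, one has $L\wedge I\cong [F,S]/[F,R]$ and $L\wedge L\cong [F,F]/[F,R]$, both quotients by the \emph{same} subalgebra $[F,R]$, with $[F,R]\subseteq[F,S]\subseteq[F,F]$. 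Picking a lift $f$ of $l$ and a lift $s_z\in S\cap[F,F]$ of $z$ (possible since $z\in L'$), the equation $l\wedge z=0$ in $L\wedge L$ says exactly $[f,s_z]\in[F,R]$; but then $[f,s_z]+[F,R]$ already vanishes in $[F,S]/[F,R]\cong L\wedge I$, which is precisely $l\wedge z=0$ in $L\wedge I$. Thus $z\in Z^{\wedge}_{L}(I)$, so $Z^{\wedge}_{L}(I)\neq 0$, and the pair $(H(m,n),I)$ is non-capable by Corollary \ref{cor4.22}.

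The main obstacle, and the only new input beyond the non-abelian case already treated, is this transfer in the last step: Proposition \ref{prop2.3.5} supplies only $Z^{\wedge}_{L}(I)\subseteq Z^{\wedge}(L)$, which is the wrong direction, so it cannot be quoted directly. The free-presentation comparison provides the missing reverse containment for the single element $z$; equivalently it shows the natural map $L\wedge I\to L\wedge L$ is injective, being modelled by the inclusion $[F,S]/[F,R]\hookrightarrow[F,F]/[F,R]$. The points I would verify carefully are that the isomorphisms of Lemma \ref{lem3.9} are compatible with this inclusion and that $s_z$ can genuinely be taken in $S\cap[F,F]$, since these are exactly where a careless identification could conceal a gap. (A purely computational alternative would verify $K=\langle z\rangle\subseteq Z^{\wedge}_{L}(I)$ through the dimension identity of Corollary \ref{coro852}, comparing $\dim\mathcal{M}(H(m,n),I)$ with $\dim\mathcal{M}(A(2m\mid n),I/\langle z\rangle)$ from Corollary \ref{corr999}; but this presupposes the multiplier of the Heisenberg pair, so I prefer the self-contained exterior-center route above.)
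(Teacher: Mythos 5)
Your first step (every ideal of dimension at least $2$ contains $z$) is fine, but the transfer step at the heart of your argument is not correct: the natural map $L\wedge I\longrightarrow L\wedge L$ is \emph{not} injective, and the containment $Z^{\wedge}(L)\cap I\subseteq Z^{\wedge}_{L}(I)$ that you extract from it is false. The cleanest way to see this inside the paper's own framework is to run your argument on $I=L^{2}=\langle z\rangle$: you use neither the abelianness of $I$ nor the hypothesis $k+h\geq 2$ anywhere except to get $z\in I$, and $z\in L^{2}$ trivially, so your proof would show $(H(m,n),L^{2})$ is non-capable for $m+n\geq 2$. But Lemma \ref{lemm99} gives $Z^{\wedge}_{L}(L^{2})\subseteq [L,L^{2}]=0$, and the paper's subsequent lemma asserts $(L,L^{2})$ \emph{is} capable; moreover Lemma \ref{lemm9875} computes $L\wedge L^{2}\cong L/L^{2}\otimes L^{2}\neq 0$, while every generator $l\wedge z$ dies in $L\wedge L$ because $z\in Z^{\wedge}(L)$ --- so the map $L\wedge L^{2}\to L\wedge L$ kills a nonzero space. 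The failure point is exactly the one you flagged as needing verification: the isomorphism $L\wedge I\cong [F,S]/[F,R]$ of Lemma \ref{lem3.9} cannot be compatible with the inclusion $[F,S]/[F,R]\hookrightarrow [F,F]/[F,R]$, and indeed the map $L\wedge I\to [F,S]/[F,R]$, $l\wedge i\mapsto [f,s]+[F,R]$, is in general only an epimorphism (the proposed inverse $[f,s]\mapsto \bar f\wedge\bar s$ is not well defined, since $F\wedge S\to F\wedge F$ need not be injective). Concretely, for $L=H(2,0)$ one checks $[F,R]=F^{3}$, so $[F,S]/[F,R]=0$ for $S=F^{2}$, whereas $L\wedge L^{2}$ is $4$-dimensional. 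So from $[f,s_{z}]\in [F,R]$ you may only conclude that the \emph{image} of $l\wedge z$ in $[F,S]/[F,R]$ vanishes, not that $l\wedge z=0$ in $L\wedge I$.

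The distinction matters because the relation that kills $l\wedge z$ in $L\wedge L$, namely $l\wedge[u,v]=(\,{}^{v}l\wedge u)-(\,{}^{u}l\wedge v)$ applied to $z=[x_{i},x_{m+i}]$, is only a defining relation of $L\wedge I$ when $u,v$ lie in $I$; for an abelian $I$ one can never write $z$ as a bracket of elements of $I$, so the vanishing cannot be imported from $L\wedge L$ and must be manufactured inside $L\wedge I$ by other means. This is why the paper takes a completely different route: it decomposes $H(m,n)$ as a central product of Heisenberg ideals (Lemma \ref{lemm987}), applies Proposition \ref{prop1} (which exhibits $z\in I^{2}\cap J^{2}$ as a sum of brackets of elements of each central factor separately) to a small ideal, and then transfers to $I$ via the surjection of Lemma \ref{lem6.33} between exterior products over isomorphic ideals. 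Your parenthetical alternative via Corollaries \ref{coro852} and \ref{corr999} is closer in spirit to a workable argument, but as you note it presupposes the multiplier computation; the exterior-center route you chose does not go through.
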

\begin{proof}
From Lemma \ref{lemm987}, we can get $H(m , n)=H(m_1, n_1)\dotplus H(m-m_1 , n-n_1)$. Putting $H(m_1, n_1)=J$  and $B^2=\left\langle b \right\rangle $ and let $I=\left\langle a, b \right\rangle$, for $a \in B$. So the pair $(H(m, n), A)$ is non-capable, by Proposition \ref{prop1}. Now let $I=\left\langle s_1,s_2,...,s_{m-1},z,r_1,r_2,...r_h \right\rangle$  and $B_1=\left\langle z,s_1 \right\rangle$. Then by Lemma \ref{lem6.33} there exists the homomorphism $H(m , n)\wedge A \longrightarrow H(m , n) \wedge B_1$. Hence, we have $Z^{\wedge}_{H(m , n)}(B_1)\neq 0$, then there exist $0\neq b_1 \in B_1 \subseteq I$ suth that  $b_1\wedge h^*=0$ for all $h^* \in H(m , n) $. Hence  $Z^{\wedge}_{H(m , n)}(I)\neq 0$ and  $(H(m , n),I)$ is not capable, by Corollary \ref{cor4.22}.
\end{proof}

Now for the case of Heisenberg Lie superalgebras with odd center we have the following result where its proof similar to the case of even center.

\begin{proposition}
The pair $(H_m, I)$ is not capable, when $I$ is an abelian  graded ideal, $\dim I =(k\mid k+1)$ where $ k\geq 1  $ and $m\geq 2 $.
\end{proposition}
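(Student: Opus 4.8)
The strategy is to mirror the even-center argument of the preceding proposition, transferring it to the odd-center Heisenberg Lie superalgebra $H_m$ and invoking the capability criterion $Z^{\wedge}_{H_m}(I)=0$ from Corollary \ref{cor4.22}. First I would use Lemma \ref{lemm988} to write $H_m = H_1 \dotplus H_{m-1}$ as a central product of its Heisenberg ideals, each copy being $\langle x_i; y_i, z\rangle \cong H_1$, sharing the common center $\langle z\rangle$. This decomposition is the analogue of the even-center splitting $H(m,n)=H(m_1,n_1)\dotplus H(m-m_1,n-n_1)$ used above, and it is available precisely because $m\geq 2$.

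Next I would distinguish according to whether the central element $z$ lies in $I$. Writing $\dim I=(k\mid k+1)$ with $k\geq 1$, the abelian ideal $I$ must meet the center, so I may assume $z\in I$ and pick a second homogeneous generator, say $s_1$, with $B_1=\langle z, s_1\rangle$ a two-dimensional abelian subideal contained in $I$. By Lemma \ref{lem6.33}, since $B_1$ embeds as an ideal isomorphic to a subideal of $I$, there is an epimorphism $H_m\wedge I \longrightarrow H_m\wedge B_1$. The key point is then to show $Z^{\wedge}_{H_m}(B_1)\neq 0$: because $z$ lies in the derived subalgebra $H_m^2\cap$ (the squared ideals of the central-product factors), Proposition \ref{prop1} applied with $I^2\cap J^2=\langle z\rangle\neq 0$ forces some nonzero element of $B_1$ into the exterior center. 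Pulling this back along the epimorphism yields a nonzero $b_1\in B_1\subseteq I$ with $b_1\wedge h^*=0$ for all $h^*\in H_m$, hence $Z^{\wedge}_{H_m}(I)\neq 0$.

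The main obstacle I anticipate is verifying the hypothesis $I^2\cap J^2\neq 0$ needed to fire Proposition \ref{prop1} in the odd-center setting. In the central product $H_m=H_1\dotplus H_{m-1}$ each factor has one-dimensional derived subalgebra equal to $\langle z\rangle$, so that $H_1^2\cap H_{m-1}^2=\langle z\rangle$; this is nonzero exactly when $m\geq 2$, which is why the restriction is imposed. I would also need to confirm that the ideal $B_1=\langle z,s_1\rangle$ genuinely arises as (or maps onto) a subideal to which Lemma \ref{lem6.33} applies — the bracket relations of $H_m$ guarantee $\langle z, s_1\rangle$ is abelian and graded, so this is routine but must be checked on homogeneous generators respecting the $\mathbb{Z}_2$-grading.

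Finally, having produced a nonzero element of $Z^{\wedge}_{H_m}(I)$, Corollary \ref{cor4.22} immediately gives that $(H_m,I)$ is not capable, completing the proof. The entire argument is structurally identical to the even-center proposition, the only genuine adaptations being the replacement of the splitting from Lemma \ref{lemm987} by that of Lemma \ref{lemm988} and the bookkeeping of the odd-graded generators $r_1,\ldots,r_{k+1}$ of $I$, which play no active role beyond ensuring $I$ is large enough that a suitable $B_1$ exists.
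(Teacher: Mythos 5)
Your proposal is correct and takes essentially the same approach as the paper, which states this proposition without proof as being ``similar to the case of even center'': you use the central-product splitting $H_m = H_1 \dotplus H_{m-1}$ from Lemma \ref{lemm988}, locate a two-dimensional subideal $B_1=\langle z, s_1\rangle$ of $I$ to which Proposition \ref{prop1} applies (since $H_1^2\cap H_{m-1}^2=\langle z\rangle\neq 0$), and conclude $Z^{\wedge}_{H_m}(I)\neq 0$ via Corollary \ref{cor4.22}, exactly mirroring the even-center argument. The only small imprecision is your invocation of Lemma \ref{lem6.33}: it gives a surjection $H_m\wedge A\longrightarrow H_m\wedge B_1$ for two \emph{isomorphic} ideals $A\cong B_1$, not a map $H_m\wedge I\longrightarrow H_m\wedge B_1$, but this does not affect the argument since the resulting nonzero element of $Z^{\wedge}_{H_m}(B_1)$ lies in $B_1\subseteq I$ and hence witnesses $Z^{\wedge}_{H_m}(I)\neq 0$.
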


\begin{lemma}
The pair $(L,L^2)$ is capable for all Heisenberg Lie superalgebra $L$.
\end{lemma}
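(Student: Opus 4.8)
The plan is to reduce the statement to the non-vanishing of a single exterior product. By Corollary \ref{cor4.22}, the pair $(L, L^2)$ is capable if and only if $Z^{\wedge}_{L}(L^2) = 0$, so the whole proof will hinge on computing this exterior center. The key structural input is that for any Heisenberg Lie superalgebra $L$, whether of even center (Theorem \ref{th3.4}) or odd center (Theorem \ref{th3.6}), one has $L^2 = Z(L) = \langle z \rangle$, a one-dimensional central graded ideal. I would first record this fact uniformly for both families.

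Next I would exploit the one-dimensionality. Since $L^2 = \langle z \rangle$, the exterior center $Z^{\wedge}_{L}(L^2)$ is a graded subspace of a one-dimensional space, hence equals either $0$ or $\langle z \rangle$. It equals $\langle z \rangle$ exactly when $l \wedge z = 0$ for every $l \in L$; but the superalgebra $L \wedge L^2$ is generated by the symbols $l \wedge z$ (as every element of $L^2$ is a scalar multiple of $z$), so this condition is equivalent to $L \wedge L^2 = 0$. Consequently $Z^{\wedge}_{L}(L^2) = 0$ if and only if $L \wedge L^2 \neq 0$, and the problem is now reduced to showing that this exterior product is non-trivial.

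For the final step I would invoke Lemma \ref{lemm9875}. Because $L^2 = Z(L)$ is central and trivially satisfies $L^2 \subseteq L^2$, that lemma gives the isomorphisms
\[
\mathcal{M}(L, L^2) \cong L \wedge L^2 \cong L/L^2 \otimes L^2 .
\]
Since $L$ is genuinely Heisenberg, $L/L^2 = L/Z(L)$ is a nonzero abelian Lie superalgebra and $L^2$ is one-dimensional, so by Proposition \ref{prop636} the supermodule tensor product $L/L^2 \otimes L^2$ is nonzero; concretely it has superdimension $(2m \mid n)$ in the even-center case $H(m,n)$ and $(m \mid m)$ (with parities shifted by the odd generator $z$) in the odd-center case $H_m$. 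Hence $L \wedge L^2 \neq 0$, which forces $Z^{\wedge}_{L}(L^2) = 0$ and proves capability.

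I expect the only genuinely delicate point to be the last step: one must be careful that the super-tensor product $L/L^2 \otimes L^2$ is computed with the correct parity bookkeeping, in particular that tensoring with the odd central generator $z$ in the $H_m$ case merely permutes the even and odd parts rather than annihilating anything. Everything else is a direct application of the established lemmas, and the base cases $H(1,0)$, $H(0,1)$, $H_1$ already appear in Examples \ref{e1}, \ref{e2}, \ref{e3}, where $\mathcal{M}(L, L^2)$ is explicitly nonzero.
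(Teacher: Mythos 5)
Your proof is correct, but it takes a genuinely different route from the paper's. The paper splits into three cases: for $H(1,0)$ and $H_1$ it simply quotes Lemma \ref{lemm555} (capability of the whole superalgebra, via Theorems \ref{th4.33}, \ref{thm555} and Proposition \ref{prop2.3.5}, forces capability of every pair), and for the remaining Heisenberg superalgebras it runs a purely formal dichotomy: $Z^{\wedge}_{L}(L^2)\subseteq Z(L)=L^2$ is either $0$ or all of the one-dimensional $L^2$, and the second alternative is killed by Lemma \ref{lemm99}, which asserts that $(L,Z^{\wedge}_{L}(I))$ is always capable when $[L,I]\neq I$ — so $Z^{\wedge}_{L}(L^2)=L^2$ would make $(L,L^2)$ capable and hence $Z^{\wedge}_{L}(L^2)=0$, a contradiction. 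You instead compute: using Lemma \ref{lemm9875} and Proposition \ref{prop636} you identify $L\wedge L^2$ with the nonzero supermodule tensor product $L/L^2\otimes L^2$ and conclude $Z^{\wedge}_{L}(L^2)=0$ directly. Your version is uniform (no case split) and yields the dimension of $\mathcal{M}(L,L^2)$ as a by-product, which the paper only extracts later when computing Schur multipliers of pairs; the paper's version is computation-free but needs the separate base cases and the auxiliary Lemma \ref{lemm99}. One caution on your final dimension claim for the odd-center case: you assert $\dim(L/L^2\otimes L^2)=(m\mid m)$ for $H_m$ and cite Example \ref{e3} as confirmation, but that example computes $\dim\mathcal{M}(H_1,H_1^2)=1$ from the free presentation, whereas $H_1/H_1^2\otimes H_1^2$ has total dimension $2$ — a discrepancy internal to the paper between Lemma \ref{lem3.9}/\ref{lemm9875} and the free-presentation calculation (the graded Jacobi relation $[x,[y,y]]=2[y,[x,y]]\in[F,R]$ kills $y\wedge z$ in the latter model but not in the former). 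This does not affect your argument, since all you need is that some $l\wedge z$ is nonzero, which holds in either model; but you should not rely on the exact value $(m\mid m)$.
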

\begin{proof}
Let us consider the following cases:
\begin{enumerate}
	\item $L=H(m , n)$ with $m=1,n=0,$
	\item $L=H(m , n)$ with  $m+n \geq 2$, or $L=H_m$ with $m\geq1,$
	\item $L=H_m$ with $m=1,$
\end{enumerate}

{\bf Case-1:}  The result follows ,by using Lemma \ref{lemm555}.

{\bf Case-2:} We have $Z^{\wedge}_{L}(L^2)\subseteq Z(L,L^2) \subseteq Z(L) =L^2$. Since $\dim L^2=1$,  then either $Z^{\wedge}_{L}(L^2)=0$ or $0\neq Z^{\wedge}_{L}(L^2)$. If  $Z^{\wedge}_{L}(L^2)=0$, then result follows from Corollary \ref{cor4.22}. If $(L,Z^{\wedge}_{L}(L^2)=L^2)$, then by Lemma \ref{lemm99} we have $(L,Z^{\wedge}_{L}(L^2)=L^2)$ is capable and this contradiction that $0\neq Z^{\wedge}_{L}(L^2)$.

{\bf Case-3:}  The result follows directly from Lemma \ref{lemm555}.
\end{proof}

The following theorem gave us the necessary and sufficient condition for the capability of Heisenberg Lie superalgebras with even center.

\begin{theorem}\label{the0909}
The pair $(H(m , n), I)$ is capable if and only if $m = 1$ and $n=0$ or $dim I = 1$.
\end{theorem}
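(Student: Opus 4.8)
The plan is to prove the biconditional by handling the two directions separately, and to reduce everything to the capability criterion $Z^{\wedge}_{L}(I)=0$ from Corollary \ref{cor4.22}. For the forward direction I would show that if $(m,n)\neq(1,0)$ and $\dim I\geq 2$, then the pair is not capable; the contrapositive of this, together with the trivial observation that the stated conditions are disjoint, gives one direction, and the converse gives the other.

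First I would dispose of the easy sufficient conditions. If $m=1$ and $n=0$, then $L=H(1,0)$, and Lemma \ref{lemm555} already tells us that $(H(1,0),I)$ is capable for \emph{every} graded ideal $I$, so this case is immediate. For the case $\dim I=1$, I would argue as follows: a one-dimensional graded ideal $I$ of $H(m,n)$ must be central, hence $I\subseteq Z(H(m,n))=L^2$, so $I=\langle z\rangle=L^2$. Then $(H(m,n),I)=(L,L^2)$, and the preceding lemma (the pair $(L,L^2)$ is capable for all Heisenberg $L$) finishes this case. So the ``if'' direction needs no new work beyond invoking the two cited lemmas.

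The substance is the ``only if'' direction. Here I assume $(m,n)\neq(1,0)$, so that $m+n\geq 2$, and $\dim I\geq 2$, and I must produce a nonzero element of $Z^{\wedge}_{H(m,n)}(I)$. The natural strategy is to split on whether $I$ is abelian or non-abelian, since both cases have already been treated. If $I$ is non-abelian, the Theorem stating ``for $m+n\geq 2$, $(H(m,n),I)$ is not capable when $I$ is a non-abelian graded ideal'' applies directly. If $I$ is abelian with $\dim I=(k\mid h)$ and $k+h\geq 2$, the Proposition stating ``$(H(m,n),I)$ is not capable when $I$ is abelian, $h+k\geq 2$ and $m+n\geq 2$'' applies directly. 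Since $\dim I\geq 2$ means exactly $k+h\geq 2$, these two results cover all ideals of dimension at least two, and in every case $Z^{\wedge}_{H(m,n)}(I)\neq 0$, so by Corollary \ref{cor4.22} the pair is not capable.

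The main obstacle, if any, is purely organizational rather than computational: I must be careful that the two cited non-capability results genuinely exhaust all graded ideals with $\dim I\geq 2$ when $m+n\geq 2$, with no overlooked boundary case (for instance an ideal that is neither clearly abelian nor covered by the Heisenberg-decomposition argument of Lemma \ref{lemm78}). Checking that the dimension hypotheses line up — that ``$\dim I\geq 2$'' for an arbitrary graded ideal translates precisely into ``$k+h\geq 2$'' in the notation of those results, and that the non-abelian classification of Lemma \ref{lemm78} leaves no gap — is the only place where care is needed. Once that bookkeeping is verified, the theorem follows by assembling the cited lemmas and propositions, so I would write the proof as a short case analysis citing each of them in turn.
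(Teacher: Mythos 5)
Your proposal is exactly the argument the paper intends: Theorem \ref{the0909} is stated without a separate proof precisely because it is meant to be read off from Lemma \ref{lemm555}, the lemma that $(L,L^2)$ is capable for every Heisenberg $L$, and the two preceding non-capability results (non-abelian $I$ and abelian $I$ with $k+h\geq 2$), all funneled through the criterion $Z^{\wedge}_{L}(I)=0$ of Corollary \ref{cor4.22}. Your handling of the ``if'' direction, including the observation that a one-dimensional graded ideal of a Heisenberg Lie superalgebra is central and hence equals $L^2=\langle z\rangle$, is correct.

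One small slip in the ``only if'' direction: $(m,n)\neq(1,0)$ does \emph{not} force $m+n\geq 2$, since $(m,n)=(0,1)$ remains. For $H(0,1)$, which has dimension $(1\mid 1)$, the only graded ideal of total dimension at least $2$ is $H(0,1)$ itself, and neither of the two cited non-capability results applies because both assume $m+n\geq 2$. This case must be closed separately: since $Z^{\wedge}_{L}(L)=Z^{\wedge}(L)$ and $H(0,1)$ is not capable by Theorem \ref{th4.33}, the pair $(H(0,1),H(0,1))$ is not capable. With that one-line patch your case analysis is exhaustive and the proof is complete.
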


Now we state the necessary and sufficient condition for the capability of Heisenberg Lie superalgebras with odd center.

\begin{theorem}
The pair $(H_m, I)$ is capable if and only if $m = 1$ and  or $dim I = 1$.
\end{theorem}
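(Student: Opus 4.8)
The plan is to mirror the structure of the companion result Theorem~\ref{the0909} for the even-center case, since the statement asserts an identical dichotomy for $H_m$: capability of the pair $(H_m, I)$ is equivalent to $m=1$ (the capable base case) or $\dim I = 1$ (the smallest admissible ideal). First I would dispose of the sufficiency directions. If $m=1$, then $H_m = H_1$ is capable by Theorem~\ref{thm555}; by Proposition~\ref{prop2.3.5} we have $Z^{\wedge}_{H_1}(J) \subseteq Z^{\wedge}(H_1) = 0$ for every graded ideal $J$, so $(H_1, J)$ is capable by Corollary~\ref{cor4.22}. This is exactly the content already recorded in Lemma~\ref{lemm555}, so the $m=1$ branch is essentially free. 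For the $\dim I = 1$ branch with $m \geq 2$, I would argue that a one-dimensional graded ideal $I$ of $H_m$ must be spanned by a single homogeneous element, and since $Z(H_m) = H_m^2 = \langle z\rangle$ is one-dimensional and odd, the only one-dimensional ideal is $I = \langle z\rangle = H_m^2$; the pair $(H_m, H_m^2)$ is capable by the preceding lemma asserting $(L, L^2)$ is capable for every Heisenberg Lie superalgebra.

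Next I would establish the necessity, i.e.\ that if $m \geq 2$ and $\dim I \geq 2$ then $(H_m, I)$ is \emph{not} capable. Here I would split according to whether $I$ is abelian or non-abelian, exactly paralleling the two propositions already proved for the even-center case. If $I$ is non-abelian, the result is precisely the stated theorem ``For $m\geq 2$, the pair $(H_m,I)$ is not capable when $I$ is a non-abelian graded ideal,'' so that case is closed by invoking it directly. If $I$ is abelian with $\dim I \geq 2$, I would appeal to the proposition stating $(H_m,I)$ is not capable for abelian $I$ of dimension $(k\mid k+1)$ with $k\geq 1$ and $m\geq 2$; more generally, using Lemma~\ref{lemm988} to write $H_m$ as a central product of copies of $H_1$, I would extract a two-dimensional subideal $B_1 = \langle z, s\rangle$ containing the center, produce via Lemma~\ref{lem6.33} a surjection $H_m \wedge I \longrightarrow H_m \wedge B_1$, and use Proposition~\ref{prop1} (with $I^2 \cap J^2 = \langle z\rangle \neq 0$ coming from the central-product decomposition) to conclude $Z^{\wedge}_{H_m}(B_1) \neq 0$, whence $Z^{\wedge}_{H_m}(I) \neq 0$ and the pair fails to be capable by Corollary~\ref{cor4.22}.

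The main obstacle will be verifying the abelian sub-case cleanly, i.e.\ confirming that \emph{any} abelian ideal $I$ of dimension at least two in $H_m$ ($m\geq 2$) genuinely contains a two-dimensional central-type subideal $B_1$ to which Lemma~\ref{lem6.33} and Proposition~\ref{prop1} apply. The subtlety is that an arbitrary abelian ideal need not contain $z$ a priori, and the homomorphism $H_m \wedge I \to H_m \wedge B_1$ furnished by Lemma~\ref{lem6.33} requires $B_1 \cong$ (a subobject isomorphic to a piece of) $I$; I would need to choose the homogeneous basis of $I$ carefully so that the central element $z$ and a suitable generator from one central-product factor $H_1 = \langle x_i; y_i, z\rangle$ together span $B_1$, and check that the bracket relations of $H_m$ force the remaining generators of $I$ to commute with everything modulo $\langle z\rangle$. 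Once the exterior-center element $0 \neq b_1 \in B_1 \subseteq I$ with $b_1 \wedge h^* = 0$ for all $h^* \in H_m$ is produced, the conclusion $Z^{\wedge}_{H_m}(I)\neq 0$ follows and Corollary~\ref{cor4.22} finishes the argument.
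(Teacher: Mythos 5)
Your proposal is correct and follows essentially the same route the paper intends: the paper states this theorem without proof as the odd-center analogue of Theorem~\ref{the0909}, and the intended argument is exactly your assembly of Lemma~\ref{lemm555} (the $m=1$ case), the lemma that $(L,L^2)$ is capable (the $\dim I=1$ case, noting that any one-dimensional graded ideal of the nilpotent superalgebra $H_m$ must meet, hence equal, the one-dimensional center $\langle z\rangle=H_m^2$), and the two preceding non-capability results for non-abelian and abelian ideals when $m\geq 2$. The subtlety you flag about abelian ideals containing $z$ is harmless, since every nonzero graded ideal of a nilpotent Lie superalgebra meets the center nontrivially and $Z(H_m)=\langle z\rangle$ is one-dimensional.
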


Now we are going to obtain the Schur multiplier of the pair $(H(m), I)$.

\begin{theorem}
Let $I$ be a $(k\mid h)$-dimensional graded ideal of $H(m , n)$ and $m+n \geq 1$. Then 
\begin{enumerate}
\item If $k+h = 1$, then $\dim \mathcal{M}(H(1 , 0), I) = 2$
\item If $k+h = 1$, then $\dim \mathcal{M}(H(0 , 1), I) = 1$
\item If $k+h = 1$, then $\dim \mathcal{M}(H(m , n), I) = 2m+n$
\item If $k+h \geq 2$, then 
\begin{align*}
	\dim \mathcal{M}(H(m , n), I) & =(\frac{1}{2}[(k-1)(4m-k)+h(2n-h+1)-1]\mid 2mn-(2m-k+1)(n-h)).
\end{align*}
\end{enumerate}
\end{theorem}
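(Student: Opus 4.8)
The plan is to separate the two regimes $k+h=1$ and $k+h\ge 2$, exploiting that in $L=H(m,n)$ the derived subalgebra equals the one-dimensional centre, $L^2=Z(L)=\langle z\rangle$. First I would record the structural dichotomy for graded ideals. If a graded ideal $I$ satisfies $[I,L]\ne 0$, then since $[I,L]\subseteq L^2=\langle z\rangle$ and $[I,L]\subseteq I$ we get $z\in[I,L]\subseteq I$; conversely, a graded ideal on which $L$ acts trivially lies in $Z(L)=\langle z\rangle$ and is therefore at most one-dimensional. Hence if $k+h=1$ then $I=\langle z\rangle=L^2$ is the central ideal, whereas if $k+h\ge 2$ then $I$ is non-central, forcing $L^2=\langle z\rangle\subseteq I$ and $[L,I]=\langle z\rangle$.

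For parts (1)--(3), where $k+h=1$, I would use that $I=L^2$ is central and contained in $L^2$, so Lemma~\ref{lemm9875} gives $\mathcal{M}(L,I)\cong (L/L^2)\otimes I$. Here $L/L^2\cong A(2m\mid n)$ and $I\cong A(1\mid 0)$ act trivially on one another, so Proposition~\ref{prop636} identifies this with the supermodule tensor product $A(2m\mid n)\otimes_{\mathbb{F}} A(1\mid 0)$. Computing its $\mathbb{Z}_2$-graded dimension yields superdimension $(2m\mid n)$, hence total dimension $2m+n$; specialising to $(m,n)=(1,0)$ and $(m,n)=(0,1)$ recovers the values $2$ and $1$, in agreement with the direct free-presentation computations of Examples~\ref{e1} and~\ref{e2}. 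Thus parts (1)--(3) collapse to a single statement.

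For part (4), with $k+h\ge 2$, the idea is to quotient out the centre and reduce to the abelian pair. The epicentre $Z^{*}(L,I)$ coincides with the exterior centre $Z^{\wedge}_{L}(I)$, which by Proposition~\ref{prop2.3.5} satisfies $Z^{\wedge}_{L}(I)\subseteq Z^{\wedge}(L)\cap I\subseteq Z(L)\cap I=\langle z\rangle$; so it is either $0$ or all of $\langle z\rangle$. Since $(H(m,n),I)$ is non-capable in this range by Theorem~\ref{the0909}, Corollary~\ref{cor4.22} forces $Z^{*}(L,I)=\langle z\rangle$, that is $L^2\subseteq Z^{*}(L,I)$. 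Applying Corollary~\ref{coro852} with $K=L^2$ then gives
\[
\dim\mathcal{M}(L,I)=\dim\mathcal{M}\big(L/L^2,\,I/L^2\big)-\dim\big(L^2\cap[L,I]\big).
\]
Now $L/L^2\cong A(2m\mid n)$ is abelian and $I/L^2$ is a $(k-1\mid h)$-dimensional graded ideal of it, so Corollary~\ref{corr999} evaluates $\dim\mathcal{M}(A(2m\mid n),I/L^2)$ explicitly, while $[L,I]=\langle z\rangle$ gives $\dim(L^2\cap[L,I])=(1\mid 0)$. Substituting $m'=2m,\ n'=n,\ k'=k-1,\ h'=h$ into Corollary~\ref{corr999} and subtracting $(1\mid 0)$ produces the claimed superdimension, the odd part being exactly $2mn-(2m-k+1)(n-h)$.

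The main obstacle is the reduction in part (4): one must genuinely place $L^2$ inside the epicentre $Z^{*}(L,I)$, which is precisely where the capability classification (Theorem~\ref{the0909}) together with the containment $Z^{*}(L,I)\subseteq Z(L)\cap I$ does the essential work, since only then does Corollary~\ref{coro852} convert the computation into the already-solved abelian case of Corollary~\ref{corr999}. The capable exceptional case $(m,n)=(1,0)$ must be handled separately in the $k+h\ge 2$ argument, because there $Z^{*}(L,I)=0$ and the centre cannot be quotiented out by this mechanism; in that situation one reverts to the direct presentation method of Examples~\ref{e1}--\ref{e3} (or, when $I=L$, to Theorem~\ref{th3.4}).
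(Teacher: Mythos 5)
Your proposal is correct and follows essentially the same route as the paper: Lemma \ref{lemm9875} for the central ideal $I=L^{2}$ (giving $2m+n$), and non-capability from Theorem \ref{the0909} combined with Corollaries \ref{coro852} and \ref{corr999} to reduce the $k+h\geq 2$ case to the abelian quotient $A(2m\mid n)$. The only differences are cosmetic: you obtain parts (1)--(2) as specialisations of part (3) rather than via the free presentations of Examples \ref{e1}--\ref{e2}, and you explicitly flag the capable exception $(m,n)=(1,0)$ in part (4), which the paper handles implicitly by treating $H(1,0)$ separately in its first case.
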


\begin{proof}
{\bf Case-1:} Observe that if $I$ is an ideal of $H(1,0)\cong <x_1,x_2,z>$, then $I=<x_1,[x_1,x_2]>$ or $I=<x_2,[x_1,x_2]>$ or $I=<[x_1,x_2]>$. One can compute the $\mathcal{M}(H(1 , 0), I)$ when $I=<x_1,[x_1,x_2]>$ or $I=<x_2,[x_1,x_2]>$ similar to Example \ref{e1}. Thus from Example \ref{e2}, the results $(1)$ and $(2)$ follows. 

{\bf Case-2:} If $\dim I=1$, then $I=H^2(m , n)$. By Lemma \ref{lemm9875}, we have $\dim \mathcal{M}(H(m , n), I) =\dim(H(m , n)/H^2(m , n)\otimes H^2(m , n)) =2m+n$.

{\bf Case-3:} If $I\neq H^2(m , n)$. Theorem \ref{the0909} implies $(H(m , n), I)$ is non-capable. By applying Corollaries \ref{corr999} and  \ref{coro852}, we have \\
	\begin{align*}
	\dim \mathcal{M}(H(m , n), I) & =\dim \mathcal{M}(H(m , n)/H^2(m , n),I/(H^2(m , n))-\dim H^2(m , n)\\
	&=(\frac{1}{2}[(k-1)(4m-k)+h(2n-h+1)] \mid 2mn-(2m-k+1)(n-h))-(1\mid0)\\
	&=(\frac{1}{2}[(k-1)(4m-k)+h(2n-h+1)-1]\mid 2mn-(2m-k+1)(n-h))
\end{align*}
\end{proof}

Now we state the same theorem for the Heisenberg Lie superalgebras with odd center and as the proof is very similar we omit it.

\begin{theorem}
Let $J$ be a $(k \mid k+1)$-dimensional graded ideal of $H_m$ and $m \geq 1$. Then 
\begin{enumerate}
\item If $k = 1$, then $\dim \mathcal{M}(H_1, I) = 1~ {\rm{or}} ~2$
\item If $k = 0$, then $\dim \mathcal{M}(H_m, J) = 2m$
\item If $k \geq 1$, then 
\begin{align*}
\dim \mathcal{M}(H_m, J) 
&=(\frac{1}{2}k[(2m-k-1)+(2m-k+1)] \mid m^2-(m-k)^2-1)
\end{align*}
\end{enumerate}
\end{theorem}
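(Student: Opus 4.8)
The plan is to mirror the argument for the even-center case, replacing $H(m,n)$ by $H_m$ and splitting according to the three regimes in the statement. Throughout write $Z := H_m^2 = \langle z\rangle$, which is simultaneously the center and the derived subalgebra of $H_m$, with $\dim Z = (0\mid 1)$.

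\emph{Case 1 ($k=1$).} Here I would compute $\mathcal{M}(H_1, J)$ by hand from a free presentation, exactly as in Example \ref{e3}: take $F$ the free Lie superalgebra on $X_{\bar 0}=\{x\}$, $X_{\bar 1}=\{y\}$, realize $J$ as $S/R$ for suitable graded ideals $R\subseteq S$ of $F$, and read off $\mathcal{M}(H_1,J)=(R\cap[F,S])/[F,R]$ using the graded Jacobi identity. Depending on the choice of $J$ this yields $\dim\mathcal{M}(H_1,J)=1$ (as in Example \ref{e3}, where $J=H_1^2$) or $2$ (when $J=H_1$, recovering $\dim\mathcal{M}(H_1)=2$ from Theorem \ref{th3.6}).

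\emph{Case 2 ($k=0$).} Then $\dim J=(0\mid 1)$ forces $J=Z=H_m^2$. Since $J$ is central and $J\subseteq H_m^2$, Lemma \ref{lemm9875} gives $\mathcal{M}(H_m,J)\cong (H_m/H_m^2)\otimes J$, and since both factors are abelian acting trivially on one another, Proposition \ref{prop636} identifies this with the $\mathbb{Z}_2$-graded tensor product of the underlying supermodules. From $\dim(H_m/H_m^2)=(m\mid m)$ and $\dim J=(0\mid 1)$, the grading rule makes the even and odd parts each $m$-dimensional, so $\dim\mathcal{M}(H_m,J)=2m$.

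\emph{Case 3 ($k\geq 1$, $J\neq Z$).} Take the central ideal $K:=Z=H_m^2$ and apply Corollary \ref{coro852}. By the capability classification for $H_m$ established just above, $(H_m,J)$ is non-capable whenever $J\neq H_m^2$, so $K\subseteq Z^*(H_m,J)$ and Corollary \ref{coro852} gives
\[\dim\mathcal{M}(H_m,J)=\dim\mathcal{M}(H_m/Z,\,J/Z)-\dim\bigl(Z\cap[H_m,J]\bigr).\]
Now $H_m/Z\cong A(m\mid m)$ and $J/Z$ is a $(k\mid k)$-dimensional graded ideal of it, so Corollary \ref{corr999} (with both ranks equal to $m$ and ideal type $(k\mid k)$) yields $\dim\mathcal{M}(H_m/Z,J/Z)=\bigl(\tfrac12 k[(2m-k-1)+(2m-k+1)]\mid m^2-(m-k)^2\bigr)$. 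It then remains to show $z\in[H_m,J]$, i.e.\ $Z\cap[H_m,J]=Z$ has dimension $(0\mid 1)$; subtracting this gives the stated formula.

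The main obstacle is precisely this last point of Case 3: verifying that every $(k\mid k+1)$-dimensional graded ideal $J\neq Z$ with $k\geq1$ contains a generator that brackets nontrivially into the center, so that $z\in[H_m,J]$. This should follow from the structural description of graded ideals of $H_m$ in Lemma \ref{lemm78} — such a $J$ must contain some $x_i$ or $y_i$ rather than merely multiples of $z$, and this element pairs to produce $z$ — but it requires a short case check on the possible isomorphism types of $J$. The remaining cases are routine, reducing either to the free-presentation computation of Case 1 or to the abelian pair formula of Corollary \ref{corr999}.
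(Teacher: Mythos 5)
Your proposal follows exactly the route the paper intends: the paper omits this proof, stating only that it is ``very similar'' to the even-center case, and your three cases (a free-presentation computation for $H_1$ as in Example \ref{e3}, Lemma \ref{lemm9875} for $J=H_m^2$, and Corollaries \ref{corr999} and \ref{coro852} applied to $H_m/H_m^2\cong A(m\mid m)$ for $k\geq 1$) are precisely that proof transported to $H_m$. The one point you flag as a possible obstacle is in fact immediate and needs no case check on isomorphism types: for $k\geq 1$ the ideal $J$ is not contained in $Z(H_m)=\langle z\rangle$, so $0\neq[H_m,J]\subseteq H_m^2=\langle z\rangle$ forces $[H_m,J]=\langle z\rangle$, giving both $Z\subseteq J$ (so $J/Z$ makes sense) and $\dim(Z\cap[H_m,J])=(0\mid 1)$.
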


\end{document}